\documentclass{amsart}

\usepackage{amsmath, amssymb, amsthm, mathtools, tikz-cd}
\usepackage[hidelinks]{hyperref}
\usepackage{environ}
\usepackage{comment}
\usepackage{enumitem}
\usepackage{url}

\theoremstyle{plain}
\newtheorem{thm}{Theorem}[section]
\newtheorem{prop}[thm]{Proposition}

\newtheorem{cor}[thm]{Corollary}
\newtheorem{lem}[thm]{Lemma}

\theoremstyle{definition}
\newtheorem{defi}[thm]{Definition}

\newtheorem*{acknowledgments}{Acknowledgments}

\theoremstyle{remark}
\newtheorem{rmk}[thm]{Remark}

\newcommand{\cH}{\mathcal{H}}
\newcommand{\cF}{\mathcal{F}}

\newcommand{\cL}{\mathcal{L}}
\newcommand{\bC}{\mathbb{C}}

\newcommand{\bN}{\mathbb{N}}
\newcommand{\bZ}{\mathbb{Z}}
\newcommand{\bR}{\mathbb{R}}
\newcommand{\op}{\text{op}}

\newcommand{\id}{\operatorname{id}}
\newcommand{\End}{\operatorname{End}}

\newcommand{\tor}{\operatorname{tor}}
\newcommand{\homeo}{\operatorname{Homeo}}

\newcommand{\clo}[1]{\overline{#1}}
\newcommand{\Span}{\operatorname{span}}
\newcommand{\Tr}{\operatorname{Tr}}

\newcommand{\restrict}[2]{\left.{#1}\right|_{#2}}
\newcommand{\norm}[1]{\left\lVert {#1}\right\rVert}

\newcommand{\AbCAlg}{\mathsf{AbC^*Alg}}
\newcommand{\Prim}[1]{\operatorname{Prim}{#1}}
\DeclareMathOperator{\Hilb}{\mathsf{Hilb}}

\title{The Homotopy 3-Type of Abelian C*-Algebras}
\author{Gregory Faurot}
\address{Department of Mathematics, The Ohio State University, Columbus, OH 43210}
\email{faurot.3@osu.edu}

\author{Giovanni Ferrer}
\email{ferrer.40@osu.edu}

\date{\today}

\subjclass{46M15, 18N20, 46L05}

\begin{document}

\begin{abstract}
    We compute the homotopy groups at each unital abelian C*-algebra $C(T)$ in the Morita $3$-category of abelian C*-algebras, C*-algebras with central maps, C*-correspondences, and adjointable bimodule maps. We describe these groups in terms of the topological data of the underlying compact Hausdorff space $T$. We also compute the actions of the first homotopy group on the second and third homotopy groups in terms of these topological invariants of $T$. 
\end{abstract}
\maketitle
\setcounter{tocdepth}{1}
\tableofcontents

\addtocontents{toc}{\protect\setcounter{tocdepth}{-1}}
\section*{Introduction}
\addtocontents{toc}{\protect\setcounter{tocdepth}{1}}
\renewcommand{\thethm}{\Alph{thm}}
The study of C*-algebras is frequently referred to as ``noncommutative topology" since Gelfand duality describes the correspondence between abelian C*-algebras and compact Hausdorff spaces. This allows topological properties to be reformulated in the context of a (generally noncommutative) C*-algebra. The most notable example of this transfer is the introduction of topological $K$-theory to the study of $C^*$-algebras, which led to the Elliott Classification Program for simple, amenable C*-algebras \cite{CGSTW23}. In a similar vein, the Serre--Swan Theorem (\cite[p. 267]{S62}) gives another correspondence between abelian C*-algebras and topology; in this case, however, it is between finitely generated projective $C(T)$-modules and vector bundles over a compact Hausdorff space $T$.
\par Categorically, Gelfand duality is a statement at the level of $1$-categories, giving an equivalence between the C*-algebraic and topological categories. The Serre--Swan Theorem, on the other hand, can be interpreted as a $2$-categorical statement. Because the C*-$2$-category $\mathsf{C^*Alg}$ has bimodules as its $1$-morphisms, the Serre--Swan Theorem uses topology to describe the finitely generated projective $1$-morphisms from $C(T)$ to itself. In this paper, we explore a $3$-categorical analogue of Gelfand duality and the Serre--Swan Theorem.
\par To elaborate more on C*-categories, the category $\mathsf{C^*Alg}_1$, whose objects are C*-algebras and whose morphisms are $\ast$-homomorphisms, is a $1$-category. C*-algebras also lie inside of a $2$-category $\mathsf{C^*Alg}$, where the objects are C*-algebras, the $1$-morphisms Hilbert C*-bimodules, and the $2$-morphisms adjointable bimodule maps. Furthermore, $\mathsf{C^*Alg}$ is actually a C*-$2$-category, where the sets of $2$-endomorphisms have the structure of C*-algebras. By considering abelian C*-algebras as $E_2$-algebras in the category of vector spaces $\mathsf{Vect}$, abelian C*-algebras form a Morita $3$-category of $E_2$-algebras called $\AbCAlg$. This category turns out to be an example of a C*-$3$-category and was first investigated by the second-named author in \cite{Ferrer24}.
\par We therefore look to the $3$-category $\AbCAlg$ for a $3$-categorical correspondence between topology and abelian C*-algebras. To that end, the homotopy hypothesis of Grothendieck (\cite{G83}) states that there should be an equivalence between homotopy $n$-types and (weak) $n$-groupoids. Thus, one possible $3$-categorical approach would be to compute the homotopy groups at abelian C*-algebras in the $3$-category $\AbCAlg$, thus describing the homotopy $3$-type. Corey Jones suggested that the first homotopy group at a unital abelian C*-algebra in $\AbCAlg$ should decompose as a short exact sequence involving $H^3(T;\bZ)$ and ${\homeo}(T)$. Our first theorem proves this statement; using the Serre--Swan theorem, it also describes the other homotopy groups in terms of topological invariants of the compact Hausdorff space $T$. We emphasize that these are not the traditional homotopy groups of $T$ from algebraic topology.
\begin{thm}\label{thmA}
    Let $C(T)$ be an abelian C*-algebra. Then the homotopy groups at $C(T)$ in $\AbCAlg$ are as follows:
    \begin{align*}
        \pi_0(\AbCAlg) &\cong  \{\text{Compact Hausdorff Spaces}\}/\cong,\\
        \pi_1(\AbCAlg,C(T))&\cong  H^3_{\tor}(T;\bZ)\rtimes \operatorname{Homeo}(T),\\
        \pi_2(\AbCAlg,C(T)) &\cong \operatorname{Pic}(T), \text{ and}\\
        \pi_3(\AbCAlg,C(T))&\cong C(T)^\times.
    \end{align*}
    Furthermore, if $T$ has the homotopy type of a CW-complex, 
    $$\pi_2(\AbCAlg,C(T))\cong H^2(T;\bZ).$$
\end{thm}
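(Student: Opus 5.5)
We compute the homotopy groups of the maximal sub-$3$-groupoid of $\AbCAlg$ level by level, from the top morphisms down. For $\pi_3$, the invertible $3$-morphisms of the identity $2$-morphism of the identity $1$-morphism of $C(T)$ are the invertible adjointable bimodule endomorphisms of the identity correspondence ${}_{C(T)}C(T)_{C(T)}$. Such a map is right $C(T)$-linear, so it is multiplication by an element of $C(T)$. Invertibility forces that element to lie in $C(T)^\times$; centrality is automatic because $C(T)$ is abelian. This gives $\pi_3\cong C(T)^\times$.

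For $\pi_2$, consider the $2$-morphisms from $\id_{C(T)}$ to itself. In the $E_2$ Morita picture these are $C(T)$-$C(T)$ correspondences that are compatible with the central (identity) map, so the left and right actions agree. Invertible ones, up to invertible $3$-morphisms, are invertible symmetric Hilbert $C(T)$-bimodules. Invertibility forces the module to be finitely generated projective of constant rank one; for instance $E\otimes E^*\cong C(T)$ gives rank one at each point. By Serre--Swan these correspond to complex line bundles, so $\pi_2\cong\operatorname{Pic}(T)$. When $T$ has the homotopy type of a CW-complex, the first Chern class identifies line bundles with $[T,\mathbb{CP}^\infty]=[T,K(\bZ,2)]\cong H^2(T;\bZ)$.

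For $\pi_0$, objects are abelian C*-algebras. By Gelfand duality the unital ones are $C(T)$ for compact Hausdorff $T$. The task is to show that $C(T)\simeq C(S)$ in $\AbCAlg$ iff $T\cong S$. One direction is immediate from a $*$-isomorphism. For the converse, we would show that a Morita equivalence in the $3$-category has an underlying central map that induces an isomorphism of centers, which for abelian algebras is the algebras themselves. The point is that the $1$-morphisms are central $*$-homomorphisms into a target algebra, and equivalence data recover a homeomorphism of spectra.

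The main obstacle is $\pi_1$. An invertible $1$-morphism $C(T)\to C(T)$ in the Morita $3$-category should be a unital C*-algebra $A$ equipped with central maps from $C(T)$ on both sides, invertible up to $2$-morphisms. The plan has four steps.
\begin{enumerate}
\item Show that invertibility forces $A$ to be continuous-trace with center $C(T)$ via one of the maps. The tensor product $A\otimes_{C(T)}A^{\op}$ must be Morita trivial, which forces $A$ to be locally a matrix algebra over $C(T)$. Unitality then makes $A$ an Azumaya algebra, that is, the section algebra of a bundle of matrix algebras.
\item Take the second map $C(T)\to Z(A)=C(T)$. It is a $*$-isomorphism, and by Gelfand duality it is a homeomorphism $\phi$ of $T$. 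This gives a homomorphism $\pi_1\to\homeo(T)$, which is split by sending $\phi$ to $C(T)$ with one action twisted by $\phi^*$.
\item Identify the kernel. It consists of Azumaya algebras with both actions equal, modulo Morita equivalence over $C(T)$. This is the Brauer group, which for unital (hence finite-rank) Azumaya algebras is $H^3_{\tor}(T;\bZ)$ by Serre/Grothendieck and Dixmier--Douady. The torsion condition comes exactly from unitality; non-unital continuous-trace algebras would give all of $H^3$.
\item Check that composition corresponds to $\otimes_{C(T)}$ and that $\homeo(T)$ acts on the Brauer group by pullback $\phi^*$ on Dixmier--Douady classes. This yields the semidirect product $H^3_{\tor}(T;\bZ)\rtimes\homeo(T)$.
\end{enumerate}
The delicate step is proving that invertible $1$-morphisms are exactly Azumaya algebras, and in particular that their unit and $2$-morphism data live over $C(T)$.
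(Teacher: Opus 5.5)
Your architecture matches the paper's: $\pi_3$ as the invertible central endomorphisms of ${}_{C(T)}C(T)_{C(T)}$, $\pi_2$ via Serre--Swan, and $\pi_1$ as a split extension of $\homeo(T)$ by the unital Brauer group $\cong H^3_{\tor}(T;\bZ)$. The gap is the step you flag as delicate. It is in fact the whole content of $\pi_0$ and of the reduction of $\pi_1$ to Dixmier--Douady theory. For an invertible $1$-morphism ${}_\phi A_\psi$ you need three facts:
\begin{enumerate}
\item \emph{both} $\phi$ and $\psi$ are $*$-isomorphisms onto $Z(A)$;
\item $\hat A$ is Hausdorff, so $\hat A\cong\widehat{Z(A)}\cong T$;
\item $\dim_A$ is finite and continuous, so $A$ has continuous trace.
\end{enumerate}
You assert (1) for the ``second map'' without argument. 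But even an injective unital $*$-endomorphism of $C(T)$ only corresponds to a continuous surjection $T\to T$. So without (1) you get neither $T\cong S$ for $\pi_0$ nor a map $\pi_1\to\homeo(T)$.

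Your proposed mechanism for (2) and (3) is that ``$A\otimes_{C(T)}A^{\op}$ must be Morita trivial, which forces $A$ to be locally a matrix algebra.'' This does not work as stated. You only know that \emph{some} inverse $B$ exists. Identifying it with $A^{\op}$, and specifying its central maps while the relation between the two central maps of $A$ is still unknown, would need a duality theory in $\AbCAlg$ that you have not set up. And even granting Morita triviality of a tensor product, extracting local information about the factor $A$ is exactly what needs proof.

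The paper closes this gap by working with an arbitrary inverse $B$.
\begin{itemize}
\item Since $A\otimes_T B$ is linked to $C(T)$ by an imprimitivity bimodule over $T$, fullness makes the actions faithful, which gives injectivity of the central map. Dauns--Hofmann together with the Rieffel homeomorphism gives surjectivity onto the center. The inclusion $Z(A)\otimes 1\subseteq Z(A\otimes_T B)$ then transfers this to $\phi$ and $\psi$ (Lemma~\ref{centraliso}).
\item Pairs $(\lambda,\rho)\in\hat A\times\hat B$ with matching central characters give irreducibles $\lambda\otimes\rho$ of $A\otimes_T B$. Since $\widehat{A\otimes_T B}\cong T$ is Hausdorff, two such irreducibles with equal central character are equivalent. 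This forces injectivity of $\hat A\to\widehat{Z(A)}$ (Lemma~\ref{t2spec}).
\item $\dim_{A\otimes_T B}=\dim_A\cdot\dim_B$ is finite and continuous. Hence $\dim_A$ is upper as well as lower semicontinuous, so it is continuous (Theorem~\ref{1morctstrace}).
\end{itemize}
Only after this do your steps 3 and 4 apply.

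Lesser omissions:
\begin{itemize}
\item Well-definedness of $\pi_1\to\homeo(T)$ on $2$-equivalence classes needs the compatibility check against the bimodule.
\item For $\pi_2$, finite generation and projectivity should be derived rather than asserted. Invertible $2$-morphisms are imprimitivity bimodules (Remark~\ref{moritainvertible}), and these are finitely generated projective over unital algebras.
\item The group isomorphism for $\pi_2$ needs the monoidality of Serre--Swan, and needs the fact that the class does not depend on the choice of Hermitian structure.
\end{itemize}
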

\noindent Here, $H^3_{\tor}(T;\bZ)$ is the torsion subgroup of the third \v{C}ech cohomology group of $T$ and $\operatorname{Pic}(T)$ is the Picard group of isomorphism classes of complex line bundles over $T$. 
\par However, a homotopy $3$-type is classified by more data than just the homotopy groups. We compute part of this additional information: the actions of $\pi_1$ on the higher homotopy groups $\pi_2$ and $\pi_3$.
\begin{thm}\label{thmB}
    Let $C(T)$ be an abelian C*-algebra with the homotopy groups as described in Theorem~\ref{thmA}. Then the actions of $\pi_1(\AbCAlg,C(T))$ on the homotopy groups $\pi_2(\AbCAlg, C(T))$ and $\pi_3(\AbCAlg, C(T))$ are described as follows: Given a $3$-cocycle $\omega \in H^3_{\tor}(T;\bZ)$, a homeomorphism $\Phi \in {\homeo}(T)$, and a line bundle $E$, we have
    $$(\omega, \Phi)\curvearrowright E \cong (\Phi^{-1})^*(E),$$
    where $(\Phi^{-1})^*(E)$ is isomorphic to the pullback bundle along $\Phi^{-1}$. When $T$ has the homotopy type of a CW-complex (and so $\operatorname{Pic}(T)\cong H^2(T;\bZ)$), this action corresponds to the pullback on $H^2(T;\bZ)$ by $\Phi^{-1}$. If we furthermore have a $3$-morphism $f \in C(T)^\times$, we have
    $$(\omega, \Phi) \curvearrowright f = f \circ \Phi^{-1}.$$
    
\end{thm}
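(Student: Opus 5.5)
The action of $\pi_1$ on $\pi_2$ and $\pi_3$ is conjugation. I will compute it on explicit representatives of the invertible $1$-morphisms and then read off the answer through the identifications of Theorem~\ref{thmA}.

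First I fix representatives for an element $(\omega,\Phi)$ of $\pi_1(\AbCAlg,C(T))$. By the identification in Theorem~\ref{thmA}, an invertible $1$-morphism $C(T)\to C(T)$ is an invertible central C*-algebra $A$ over $C(T)$. Such an $A$ should be Morita equivalent to the section algebra of a continuous-trace (Azumaya-type) bundle with Dixmier--Douady class $\omega$, with the $C(T)$-actions on its two sides twisted by $\Phi$. Concretely I would take $A=\Gamma(\mathcal{A})$ with left $C(T)$-action $f\cdot a=f a$ and right action $a\cdot f=(f\circ\Phi^{-1})a$, or the opposite convention. I will pin the convention down by checking which choice makes the group law agree with the semidirect product structure in Theorem~\ref{thmA}. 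The inverse $1$-morphism is $A^{\op}$ with the roles reversed.

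An element of $\pi_2$ is a line bundle $E$, viewed as the invertible $C(T)$–$C(T)$ correspondence $\Gamma(E)$, which is a $2$-endomorphism of $\id_{C(T)}$. Its conjugate by $A$ is obtained by whiskering: $A\otimes_{C(T)}\Gamma(E)\otimes_{C(T)}\cdots$, i.e.\ the $2$-morphism $\id_A\boxtimes \Gamma(E)$ transported along the invertible $2$-morphisms $A\circ A^{-1}\cong\id$.

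The key computation is that the balanced tensor product of $A$ with $\Gamma(E)$ over the twisted copy of $C(T)$ is isomorphic, as an $A$–$A$ correspondence, to $A$ tensored with the module of sections of $(\Phi^{-1})^*E$ over the untwisted $C(T)$. Fiberwise this is the isomorphism $\Gamma(E)\otimes_{C(T),\Phi}C(T)\cong\Gamma((\Phi^{-1})^*E)$. The class $\omega$ drops out because $\mathcal{A}$ acts fiberwise and its fibers are matrix algebras (or stabilized versions of them). Tensoring a fiberwise Morita equivalence with a line bundle commutes with that Morita equivalence, since the center of an Azumaya algebra is $C(T)$ and bimodules in the image of $\operatorname{Pic}(T)$ are central. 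Finally, under $\operatorname{Pic}(T)\cong H^2(T;\bZ)$ via $c_1$, naturality of the first Chern class gives the pullback on $H^2$.

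For $\pi_3$, an element is a unitary or invertible $f\in C(T)^\times$, acting as an automorphism of $\id_{\id_{C(T)}}=C(T)$. Conjugation whiskers $f$ by $\id_A$, producing the bimodule map $a\mapsto a\cdot f=(f\circ\Phi^{-1})a$ on $A$. Under the identification of the center of $A$ with $C(T)$ via the left action, this is multiplication by $f\circ\Phi^{-1}$. Once again $\omega$ plays no role, since central elements commute with everything in $\mathcal{A}$.

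I expect the main obstacle to be bookkeeping. One difficulty is confirming that the coherence isomorphisms $A\otimes A^{-1}\cong C(T)$ used in the conjugation do not introduce extra twists. The other is fixing the direction of $\Phi$ versus $\Phi^{-1}$ consistently with the group structure from Theorem~\ref{thmA}. To handle both, I will first verify the statement for $\omega=0$, where $A={}_{\id}C(T)_{\Phi}$ is explicit. I will then argue that any $(\omega,\Phi)$ factors as $(\omega,\id)\cdot(0,\Phi)$, and that $(\omega,\id)$ acts trivially because its underlying algebra has center exactly $C(T)$ acting compatibly on both sides. Since the action is a group homomorphism, this factorization suffices.
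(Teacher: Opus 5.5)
Your proposal is correct and follows essentially the paper's route. The paper also splits ${_\phi}A_\psi \cong {_{\psi^{-1}\phi}}C(T)_{\id}\otimes_T {_\psi}A_\psi$ (Proposition~\ref{prop:splitoffcenter}), shows the equal-central-map factor acts trivially, and computes the twisted $C(T)$ factor explicitly via $f\otimes g\otimes h\mapsto \phi^{-1}(fg)h$, obtaining $\Phi^*(\Gamma(E))\cong\Gamma((\Phi^{-1})^*E)$ and multiplication by $f\circ\Phi^{-1}$. The only difference is that your fiberwise/Azumaya reason for why $\omega$ drops out is heuristic, whereas the paper proves it with an explicit unitary tensor-swap $\tau(a\otimes_T x\otimes_T b)=x\otimes_T a\otimes_T b$, which is well defined precisely because the two central maps of $A$ agree.
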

\par Future applications of this work involve the interplay between quantum symmetries and C*-algebras. Classical symmetries of C*-algebras arise as group actions. The study of group actions on C*-algebras has been quite fruitful, such as through the construction of crossed-product C*-algebras (\cite{W07}) or the classification of group actions on simple purely infinite C*-algebras (\cite{GS24}). From a categorical viewpoint, this is because the $\ast$-automorphisms of a C*-algebra form a $1$-category. As C*-algebras naturally lie inside the $2$-category $\mathsf{C^*Alg}$, quantum symmetries of C*-algebras may be obtained from actions of tensor categories on C*-algebras, which is a promising active area of research (e.g. \cite{K24, AKK24, EGPJ24}). By considering abelian C*-algebras inside a $3$-category, we may construct actions by monoidal $2$-categories to obtain higher quantum symmetries.
\par The paper is laid out as follows. In Section~\ref{section:background}, we recount a wide variety of background, both on C*-algebras and C*-categories. In Section~\ref{section:homotopygroups}, we compute the homotopy groups at $C(T)$, proving Theorem~\ref{thmA}. Theorem~\ref{thmB} is proven in Section~\ref{section:actions}. Finally, in order to compute $\pi_2$ in $\AbCAlg$, we needed to know that the Serre--Swan Theorem is a monoidal equivalence. As we were unable to locate a reference for this fact, we provide a proof in Appendix~\ref{Sec:Serre-Swan}.

\begin{acknowledgments}
    The authors would like to thank Corey Jones for initially suggesting this problem. We also thank David Penneys and Nick Gurski for numerous helpful conversations about bimodules and homotopy types. The authors were partially supported by NSF grant DMS-2154389.
\end{acknowledgments}

\setcounter{thm}{0}
	\numberwithin{thm}{section}
\section{Background}\label{section:background}
Unless otherwise stated, we will assume all C*-algebras to be unital.
\begin{defi}
A C*-algebra $A$ consists of the following data.
\begin{itemize}
\item A (unital) algebra $A$. We will use lowercase $a,b,\hdots$ for elements of $A$ and $1 \in A$ for its unit.
\item A (conjugate-linear) involution $\dag \colon \overline{A} \to A$ satisfying $(ab)^\dag = b^\dag a^\dag$ and $a^{\dag \dag} = a$ (note that we are using $\dagger$ for the involution instead of the usual $\ast$ to avoid confusion with precomposition).
\item A complete submultiplicative norm $\|\cdot\|$ on $A$.
\end{itemize}
We require that this data satisfy the C*-identity:
$$
\|a\| = \|a^\dag a\|^{1/2} \qquad \forall a \in A.
$$
\end{defi}
We also recall the definition of positive elements in a C*-algebra. An element $a \in A$ is \emph{positive} ($a\geq0$) if there is a $b \in A$ with $b^\dagger b = a$. We use $A^+$ to denote the set of positive elements.

\subsection{Imprimitivity Bimodules} 
We will use $\langle \cdot | \cdot \rangle$ to denote ($A$-valued) inner products that are linear in the second coordinate. On the other hand, we will use $\langle \cdot, \cdot\rangle$ for inner products that are linear in the first coordinate.
\par A right \emph{inner product C*-module} $X$ is a vector space over $\bC$ that additionally carries a right action of a C*-algebra $A$ that is compatible with the vector space structure. Furthermore, there is an $A$-valued inner product $\langle \cdot | \cdot \rangle\colon X \times X \to A$ satisfying the following properties:

\begin{center}
    \begin{tabular}{ r@{$\null = \null$} l c }
        $\langle x| \alpha y + \beta z\rangle \phantom{l}$& $\alpha \langle x| y \rangle + \beta \langle x | z \rangle$ &$(x,y,z \in X, \alpha,\beta \in \bC)$,\\
        $\langle x | y\triangleleft a \rangle \phantom{l}$& $\langle x | y \rangle a$ &$(x,y \in X, a \in A)$,\\
        $\langle y| x \rangle \phantom{l}$& $\langle x | y \rangle ^\dagger$ &$(x,y \in X)$, \\
        \multicolumn{3}{c}{$\langle x | x \rangle \phantom{l}\geq 0, \text{$\phantom{llllll}$and if } \langle x | x \rangle = 0, \text{ then } x =0$}
        
    \end{tabular}
\end{center}
Any inner product C*-module can be equipped with the following norm:
$$\Vert x \Vert_X = \norm{\langle x | x \rangle}_A^{1/2}.$$
An inner product C*-module is called a (right) \textit{Hilbert C*-module} if it is complete in this norm. A left Hilbert C*-module is defined similarly with a left module action of $A$ on $X$ and an $A$-valued inner product $\langle \cdot, \cdot \rangle$.
\par Observe that $\langle X | X \rangle\coloneq \Span\{\langle x | y \rangle: x,y \in X\}$ is a two-sided ideal of $A$. A Hilbert $A$-module $X$ is called \textit{full} if $\langle X | X \rangle$ is a dense ideal of $A$. The following remark about full Hilbert modules will be used numerous times.
\begin{rmk}\label{fullimpliesfaithful}
We note that fullness implies that the action is faithful. Indeed, for $X_A$ a full Hilbert module, if $x \lhd a = 0$ for every $x \in X$,
$$\langle y | x \rangle_A a = \langle y | x \lhd a \rangle_A = 0.$$
As this holds for all $x,y \in X$, and since $\langle X | X \rangle_A \subseteq A$ is dense, we have $a = 1_Aa  = 0$.
\end{rmk}
\par Given two right Hilbert $A$-modules $X$ and $Y$, a function $f\colon X \to Y$ is \textit{adjointable} if there is another function $f^\dag\colon Y \to X$ satisfying
$$\langle fx| y \rangle = \langle x | f^\dag y \rangle \text{ for all $x \in X$, $y \in Y$}.$$
Adjointable functions are automatically continuous $A$-module maps. We denote the set of adjointable $A$-module maps from $X$ to $Y$ by $\cL_A(X,Y)$, or simply $\cL_A(X)$ if $X=Y$. 
\par The $3$-category $\AbCAlg$ will have bimodules of C*-algebras as its $2$-morphisms, usually called C*-correspondences. An \emph{$A$-$B$ correspondence} is a right Hilbert $B$-module $X$ along with a $\ast$-homomorphism $\phi\colon A \to \cL_B(X)$. The following definition describes the invertible correspondences.
\begin{defi}
    An $A$-$B$ correspondence $X$ is called an \textit{imprimitivity bimodule} if:
\begin{enumerate}
    \item $X$ is a full left Hilbert $A$-module and a full right Hilbert $B$-module.
    \item For all $x,y \in X, a \in A$, and $b \in B$, we have
    $$\langle a\triangleright x | y\rangle_B=\langle x | a^\dagger\triangleright  y \rangle_B \phantom{ll}\text{ and }\phantom{ll} {_A}\langle x\triangleleft b , y\rangle ={_A} \langle x , y\triangleleft b^\dagger \rangle$$
    so that $A$ and $B$ act as adjointable operators relative to the other's inner product.
    \item For all $x,y,z \in X$, we have
    $${_A}\langle x , y \rangle \triangleright z = x \triangleleft \langle y | z\rangle_B.$$
\end{enumerate}
\end{defi}
Imprimitivity bimodules induce an equivalence relation on C*-algebras, analogous to Morita equivalence in algebra.
\begin{defi}
    Two C*-algebras $A$ and $B$ are said to be \emph{Morita equivalent} if there exists an $A$-$B$ imprimitivity bimodule.
\end{defi}

\subsection{Spectra \& Dauns--Hofmann}
We now provide some background on the representation theory of C*-algebras. 
\begin{defi}
    A \emph{representation} of a C*-algebra $A$ is a $\ast$-homomorphism $\lambda\colon A \to B(\cH_\lambda)$ for some Hilbert space $\cH_\lambda$. Two representations $\lambda$ and $\rho$ of $A$ are unitarily equivalent if there is a unitary $(u^\dag=u^{-1})$ operator $u\colon \cH_\rho\to \cH_\lambda$ so that
    $$\lambda(a)=u\rho(a)u^\dagger$$
    for all $a \in A$.
\end{defi}
Of particular importance are the irreducible representations, as these form the building blocks of larger representations. A closed subspace $X\subseteq \cH_\lambda$ is \emph{invariant} for $\lambda$ if $\lambda(a)x \in X$ for all $a \in A$ and $x \in X$.
\begin{defi}
    A representation $\lambda$ is said to be \emph{irreducible} if the only closed invariant subspaces are $\{0\}$ and $\cH_\lambda$. Equivalently, the only operators in $B(\cH_\lambda)$ commuting with $\lambda(A)$ are scalar multiples of the identity by \cite[Lemma A.1]{RW98}.
\end{defi}

\begin{rmk}
    For an irreducible representation $\lambda$ of $Z$, observe that $\lambda(Z(A))\subseteq \bC1_{H_\lambda}$ because $\lambda(Z(A))$ commutes with $\lambda(A)$. As a result, when we write $\restrict{\lambda}{Z(A)}$, we will also implicitly restrict the codomain to be $\bC \cong \bC1_{\cH_\lambda}$.
\end{rmk}

\begin{defi}
    The \emph{spectrum} $\hat{A}$ of a C*-algebra $A$ is defined to be the set of unitary equivalence classes of irreducible representations of $A$.
\end{defi}
Currently, the spectrum is simply a set; it does not carry a topology. A topology on $\hat{A}$ is defined using the ideals that arise as kernels of these irreducible representations.
\begin{defi}
    A closed, two-sided ideal $P$ of $A$ is \emph{primitive} if $P$ is the kernel of an irreducible representation of $A$. Let $\Prim{A}$ denote the set of primitive ideals of $A$.
\end{defi}
The topology on $\Prim{A}$ is defined in the following manner. 
\begin{defi}
    Given a subset $F\subseteq \Prim{A}$, define its closure $\clo{F}$ to be the set
    $$\clo{F}\coloneq \{P \in \Prim{A}:\bigcap_{I \in F}I \subseteq P\}$$
\end{defi}
One verifies, using the Kuratowski closure axioms, that this closure operation defines a topology on $\Prim{A}$ called the \textit{hull-kernel topology}. Since two unitarily equivalent representations have the same kernel, the map $[\lambda]\mapsto \ker\lambda$ is a well-defined map $\ker\colon \hat{A}\to \Prim{A}$. The spectrum is thus equipped with a topology by pulling back the topology on $\Prim{A}$ via the kernel map. In general, neither of these topologies is Hausdorff. However, as we will be interested in unital C*-algebras, the spectrum and primitive ideal space will at least be compact.
\begin{prop}[{\cite[Lemma~A.30]{RW98}}]\label{prop:spectrumcompact}
    Let $A$ be a unital C*-algebra. Then $\hat{A}$ and $\Prim{A}$ are compact.
\end{prop}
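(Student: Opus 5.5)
The plan is to prove compactness of $\Prim{A}$ directly from the hull-kernel closure operation, and then transfer it to $\hat{A}$. The transfer is immediate: the topology on $\hat{A}$ is by definition the pullback of the topology on $\Prim{A}$ along the surjective map $\ker\colon \hat{A}\to\Prim{A}$. Every open set of $\hat{A}$ therefore has the form $\ker^{-1}(U)$ with $U$ open in $\Prim{A}$. An open cover of $\hat{A}$ thus comes from a family of open sets $U_i$ whose preimages cover $\hat{A}$, and by surjectivity the $U_i$ themselves cover $\Prim{A}$. A finite subcover of the $U_i$ pulls back to a finite subcover of $\hat{A}$. It therefore suffices to treat $\Prim{A}$.

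For $\Prim{A}$, I would use the finite intersection property for closed sets. Closed sets in the hull-kernel topology are exactly the hulls $\operatorname{hull}(I)=\{P\in\Prim{A}: I\subseteq P\}$ of closed two-sided ideals $I$. Indeed, if $F$ is closed then $F=\clo{F}=\operatorname{hull}(\bigcap_{P\in F}P)$. Conversely, any hull is closed, because $\bigcap_{P\in\operatorname{hull}(I)}P\supseteq I$.

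Now let $\{F_j\}$ be a family of closed sets, with $F_j=\operatorname{hull}(I_j)$, whose total intersection is empty. We have $\bigcap_j \operatorname{hull}(I_j)=\operatorname{hull}(J)$, where $J$ is the closed ideal generated by $\sum_j I_j$. The key input is that every proper closed two-sided ideal of a C*-algebra is contained in some primitive ideal. This holds because every closed ideal is the intersection of the primitive ideals containing it; alternatively, it follows by passing to the quotient $A/J\neq 0$ and taking an irreducible representation of it, which exists by GNS applied to a pure state. So $\operatorname{hull}(J)=\emptyset$ forces $J=A$.

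The unitality of $A$ enters at exactly this point, and this is the step that needs the most care. The algebraic sum $\sum_j I_j$ is a dense ideal of $A$. Since the invertible elements form an open set containing $1$, a dense ideal contains an invertible element and hence equals $A$. So $1\in\sum_j I_j$, and $1$ is a finite sum $1=a_{1}+\dots+a_{n}$ with $a_k\in I_{j_k}$. Then the ideal generated by $I_{j_1},\dots,I_{j_n}$ is all of $A$. No primitive ideal can contain all of these $I_{j_k}$, since a primitive ideal is proper. Hence $F_{j_1}\cap\dots\cap F_{j_n}=\emptyset$, which gives compactness.

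The main obstacle is the existence of a primitive ideal containing a given proper closed ideal. I would cite this from standard theory, namely the GNS construction together with the Krein--Milman theorem to obtain pure states, rather than reprove it.
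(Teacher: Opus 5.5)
Your proof is correct. The paper does not prove this proposition itself; it only cites \cite[Lemma~A.30]{RW98}. So your argument is a self-contained replacement for that citation rather than an alternative to an in-paper route, and it is the standard proof. The steps are:
\begin{enumerate}
\item closed sets of $\Prim{A}$ are hulls of ideals;
\item if a family of hulls has empty intersection, the closed ideal generated by the corresponding ideals is all of $A$, because every proper closed ideal lies in some primitive ideal;
\item the algebraic sum of those ideals is therefore a dense ideal, so it contains $1$, since the invertible elements form an open set;
\item writing $1$ as a finite sum picks out a finite subfamily with empty intersection.
\end{enumerate}
Your reduction from $\hat{A}$ to $\Prim{A}$ is also right, because $\hat{A}$ carries the initial topology of the surjection $\ker$. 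You correctly identify the only external inputs: the existence of a primitive ideal containing each proper closed ideal, and the existence of pure states. Both can be cited.

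One convention should be stated explicitly. Your step ``a primitive ideal is proper'' requires irreducible representations to be nonzero. The paper's definition of irreducibility does not say this; read literally, the zero representation on $\bC$ would qualify, and its kernel is $A$. Excluding it is the standard convention and clearly the one the paper intends, so this is not a gap, only an assumption to write down.
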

\par We now discuss the Dauns--Hofmann Theorem. Since we are only interested in the unital case, we can avoid discussing multiplier algebras.
\begin{thm}[{\cite[Lemma 8.15]{DH68}}, Dauns--Hofmann]\label{DHthm}
    Let $A$ be a unital C*-algebra. Then there is a $\ast$-isomorphism $\mathcal{F}\colon C(\Prim{A}) \to Z(A)$ so that 
    $$q_P(\mathcal{F}(f)a)=f(P)q_P(a)$$
    for all $P \in \Prim{A}$ and $a \in A$, where $q_P\colon A \to A/P$ is the quotient map.
\end{thm}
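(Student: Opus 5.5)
The Dauns--Hofmann Theorem is cited from \cite{DH68}, so we do not reprove it from scratch. Instead, here is how one would establish the unital version stated above.

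The strategy is to build $\mathcal{F}$ pointwise and then show it is a $\ast$-isomorphism. We start from the easy direction, $Z(A) \to C(\Prim{A})$. For $z \in Z(A)$ and $P = \ker\lambda \in \Prim{A}$, the remark above shows that $\lambda(z)$ is a scalar $\hat z(P)1$. This scalar is independent of the chosen irreducible $\lambda$ with kernel $P$, because $z - \hat z(P)1 \in P$ and $P$ is proper. The map $z \mapsto \hat z$ is a unital $\ast$-homomorphism into the bounded functions on $\Prim{A}$. It is injective since $\bigcap \Prim{A} = 0$: irreducible representations separate points. Continuity of $\hat z$ for the hull-kernel topology follows from the standard fact that $P \mapsto \|q_P(a)\|$ is lower semicontinuous for every $a$, which in turn follows from the definition of closure as a hull of an intersection. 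Applying this to $z - c$ and $(z-c)^\dagger(z-c)$ for scalars $c$ shows that $\{P : |\hat z(P) - c| > r\}$ is open. Since $\hat z$ is constant on fibres, it is also upper semicontinuous, and hence continuous.

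The main work is surjectivity. Given $f \in C(\Prim{A})$, we must produce $z \in Z(A)$ with $q_P(z) = f(P)1$ for all $P$. First we reduce to the case $f \ge 0$, and by continuous functional calculus it suffices to treat a dense $\ast$-subalgebra of $C(\Prim{A})$. One difficulty is that $\Prim{A}$ is compact (Proposition~\ref{prop:spectrumcompact}) but need not be Hausdorff. The route we would take is to pass to the complete regularization $\beta$ of $\Prim{A}$, the quotient identifying points not separated by continuous functions; this space is compact Hausdorff, and $C(\Prim{A}) = C(\beta)$. For each closed set $K \subseteq \beta$ we take the ideal $I_K = \bigcap_{P \in K} P$. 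For disjoint closed sets $K, L$, the ideals $I_K$ and $I_L$ satisfy $I_K + I_L = A$. This holds because the hull of $I_K + I_L$ is contained in the preimages of both $K$ and $L$, hence is empty, and a proper closed ideal in a unital algebra lies in a primitive ideal.

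Using this, for a finite partition of unity $\{g_i\}$ on $\beta$ subordinate to a fine cover, we write $1 = e_i + e_i'$ with $e_i \in I_{K_i}$, where $K_i$ is the complement of the support of $g_i$. We then form approximants $z_\varepsilon = \sum_i f(x_i)\, h_i$, where the $h_i$ are chosen, via a Urysohn-type argument in $A$, to satisfy $q_P(h_i) = g_i(P)1$ up to error $\varepsilon$. In practice the cleanest implementation is to apply the above to the commutative C*-algebra $A$ itself: we show that for each $f$ the set of $a \in A$ with $\sup_P \|q_P(a) - f(P)q_P(1)\|$ small is nonempty, and that these sets form a Cauchy net. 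The limit $z$ then satisfies $q_P(z) = f(P)1$ exactly for all $P$. Centrality is automatic: $q_P(za - az) = 0$ for all $P$, and $\bigcap P = 0$. Finally, the identity $q_P(\mathcal{F}(f)a) = f(P)q_P(a)$ is immediate from $q_P(\mathcal{F}(f)) = f(P)1$.

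The expected obstacle is this surjectivity/approximation step, specifically producing elements of $A$ whose images in every quotient $A/P$ approximate prescribed scalars uniformly in $P$. This is where the comaximality $I_K + I_L = A$ and the compactness of $\Prim{A}$ must be combined with a partition-of-unity argument, and where the non-Hausdorffness of $\Prim{A}$ forces the passage to its complete regularization.
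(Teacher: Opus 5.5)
The paper gives no proof of this statement; it quotes it from Dauns and Hofmann. So the question is whether your sketch stands on its own, and it does not yet.

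\textbf{What works.} The embedding $z\mapsto\hat z$ of $Z(A)$ into $C(\operatorname{Prim}A)$ is fine, with one caveat. The phrase ``constant on fibres'' is not what gives upper semicontinuity. Openness of $\{P:|\hat z(P)-c|>r\}$ for all $c,r$ already gives continuity: for self-adjoint $z$, take $c$ real and far from the range to get $\{\hat z>s\}$ and $\{\hat z<s\}$ open. Your comaximality lemma is also correct, and it needs no complete regularization. For any disjoint closed $F_1,F_2\subseteq\operatorname{Prim}A$, the hull of $I_{F_1}+I_{F_2}$ is $F_1\cap F_2=\emptyset$.

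\textbf{The gap.} Surjectivity is the whole content of the theorem, and you leave exactly that step as an unspecified ``Urysohn-type argument''. A single splitting $1=e+e'$ with $e\in I_K$, $e'\in I_L$ only gives $q_P(e)=0$ over $K$ and $q_P(e)=1$ over $L$. Over the transition region, $q_P(e)$ is an arbitrary, non-scalar element of $A/P$. So sums $\sum_i f(x_i)h_i$ built from such elements do not approximate $f(P)1$ uniformly, and no choice of partition of unity repairs this. Separately, ``apply the above to the commutative C*-algebra $A$ itself'' does not make sense: $A$ is not commutative, and working inside $Z(A)$ would be circular.

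\textbf{The missing idea.} You need a staircase average that confines the uncontrolled part to a single term of size $1/n$. Take $g\in C(\operatorname{Prim}A)$ with $0\le g\le 1$, and for $k=1,\dots,n$ set $L_k=\{g\ge k/n\}$ and $K_k=\{g\le (k-1)/n\}$. These are disjoint closed sets, so comaximality gives $e_k$ with $q_P(e_k)=0$ on $K_k$ and $q_P(e_k)=1$ on $L_k$. Replace $e_k$ by $\chi(\tfrac12(e_k+e_k^\dagger))$ with $\chi(t)=\min(\max(t,0),1)$. Since $q_P$ commutes with functional calculus, this keeps the boundary values and makes $0\le q_P(e_k)\le 1$ for every $P$.

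Now set $h_n=\tfrac1n\sum_k e_k$. At any $P$, all but at most one of the $q_P(e_k)$ are exactly $0$ or $1$. Hence $q_P(h_n)$ and $g(P)$ lie in a common interval $[\tfrac{j-1}{n},\tfrac{j}{n}]$, so $\norm{q_P(h_n)-g(P)1}\le 1/n$ for every $P$. Since $\norm{a}=\sup_P\norm{q_P(a)}$, the sequence $(h_n)$ is Cauchy. Its limit $z$ satisfies $q_P(z)=g(P)1$ for all $P$, and centrality follows as you say. General $f$ follows by affine rescaling of real and imaginary parts.

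With this step supplied, your outline becomes a complete proof, and the partition-of-unity and complete-regularization detours can be dropped.
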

As a consequence, we obtain a diagram of the following form.
\begin{cor}[c.f. Lemma~\ref{restrict}]
Let $A$ be a unital C*-algebra. Then there exist continuous surjections, as in the following diagram:
\begin{center}
    \begin{tikzcd}
        \hat{A} \arrow[r, two heads,  "\ker"] & \Prim{A} \arrow[r, two heads, "\text{D--H}"] & \widehat{Z(A)}.
    \end{tikzcd}
\end{center}
Moreover, these map witness $\widehat{Z(A)}$ as the Stone--\v{C}ech compactifications\footnote{Usually, this construction is applied to a Hausdorff space $X$ and is commonly referred to as the compactification $\beta X$. In this case, however, $\hat{A}$ is already compact, but not Hausdorff. Thus, this construction would be aptly named the ``Hausdorffication'' $\beta \hat{A}$ of $\hat{A}$. We will further discuss this construction in the later section $\S$\ref{subsec:pi0}.} of $\Prim{A}$ and $\hat{A}$ \cite[Lemma~1.1]{N95}.
\end{cor}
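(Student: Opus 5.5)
The plan is to construct both maps explicitly, check that they are continuous surjections, and then identify $\widehat{Z(A)}$ as the universal Hausdorff quotient of $\Prim{A}$ and of $\hat{A}$.

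\emph{The map $\ker$.} It is continuous by definition, since the topology on $\hat{A}$ is pulled back from $\Prim{A}$. It is surjective because every primitive ideal is, by definition, the kernel of some irreducible representation.

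\emph{The map $\Prim{A}\to\widehat{Z(A)}$.} Send $P=\ker\lambda$ to the character $\restrict{\lambda}{Z(A)}\colon Z(A)\to\bC$. This is well defined, since two irreducible representations with the same kernel agree on $Z(A)$. Indeed, by Theorem~\ref{DHthm}, for $z=\mathcal{F}(f)$ we have $q_P(z)=f(P)q_P(1)$, so $\lambda(z)=f(P)$, and this depends only on $P$.

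This formula shows that the map is the composite of the evaluation map $P\mapsto \mathrm{ev}_P$, from $\Prim{A}$ to the characters of $C(\Prim{A})$, with the Gelfand dual of $\mathcal{F}^{-1}$.

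\emph{Continuity.} The topology on $\widehat{Z(A)}$ is the weak-$*$ topology, so it suffices that $P\mapsto f(P)$ is continuous for each $f\in C(\Prim{A})$, which is immediate.

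\emph{Surjectivity.} This is the step I expect to need care. A character $\chi$ of $Z(A)$ is a maximal ideal $M\subseteq Z(A)$, and I would consider the closed ideal $MA$ of $A$.
\begin{itemize}
\item I would show $MA\neq A$, using a partition-of-unity argument. If $1=\sum z_i a_i$ with $z_i\in M$, then choose $z=\sum z_i^\dagger z_i\in M$. The Dauns--Hofmann identity gives $q_P(z)=\sum|\chi_P(z_i)|^2$ for each $P$.
\item Alternatively and more cleanly, I would take $P$ to be a primitive ideal containing $MA$; this exists since $MA$ is proper and every proper closed ideal lies in a primitive one. Then $\restrict{\lambda}{Z(A)}$ kills $M$, so it equals $\chi$.
\item Properness follows because $f\mapsto f(P)$ vanishing on $M$ for no $P$ would force, by compactness of $\Prim{A}$ (Proposition~\ref{prop:spectrumcompact}), an element of $M$ that is invertible in $C(\Prim{A})$. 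That is a contradiction.
\end{itemize}

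\emph{Stone--Čech property.} Let $g\colon\Prim{A}\to K$ be continuous with $K$ compact Hausdorff. By Urysohn, $K$ embeds in a product of intervals, so continuous functions on $K$ separate its points. Each $h\circ g$, for $h\in C(K)$, lies in $C(\Prim{A})\cong Z(A)$. Hence if two primitive ideals have the same image in $\widehat{Z(A)}$, then all such functions agree on them, and so do their images under $g$. Therefore $g$ factors set-theoretically through $\widehat{Z(A)}$.

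The factored map is continuous because $\widehat{Z(A)}$ carries the quotient topology: it is a continuous surjection from a compact space onto a Hausdorff space, hence a closed map. Uniqueness of the factorization follows from surjectivity.

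\emph{Composite with $\ker$.} The same argument applies to $\hat{A}\to\widehat{Z(A)}$, since every continuous map from $\hat{A}$ to a Hausdorff space is constant on fibers of $\ker$ (open sets in $\hat{A}$ are pulled back from $\Prim{A}$). Alternatively, this part can simply be cited from \cite[Lemma~1.1]{N95}.
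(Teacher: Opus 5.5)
Your proof is correct, and it is more self-contained than what the paper does. The paper gives no argument of its own here: it presents the diagram as an immediate consequence of Theorem~\ref{DHthm} and cites \cite[Lemma~1.1]{N95} for the Stone--\v{C}ech (``Hausdorffication'') property. Later, in Lemma~\ref{restrict}, it obtains surjectivity onto $\widehat{Z(A)}$ from Nilsen's statement that $\Prim{A}\to\Prim{Z(A)}$ has dense range, together with compactness of $\Prim{A}$.

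You instead derive everything from Dauns--Hofmann:
\begin{itemize}
\item The map is explicitly $P\mapsto \mathrm{ev}_P\circ\mathcal{F}^{-1}$.
\item Surjectivity is the Gelfand-style maximal-ideal argument. This needs only compactness of $\Prim{A}$, not Hausdorffness, since a strictly positive continuous function on a compact space has a positive minimum.
\item The universal property follows because points in the same fibre cannot be separated by $C(\Prim{A})\cong Z(A)$, whereas points of $K$ are separated by $C(K)$, combined with the closed-map lemma.
\item The passage to $\hat{A}$, via topological indistinguishability of points in a fibre of $\ker$, is also right.
\end{itemize}
Your version makes visible exactly where unitality and compactness enter. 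The citation is shorter and carries the non-unital generality of Nilsen's lemma.

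One tidy-up: your surjectivity bullets are redundant and read as circular. The finite-subcover argument of your third bullet already yields $P$ with $\chi_P|_M=0$. To get there, sum $|f_{P_i}|^2$ over a finite subcover to produce an element of $M$ that is invertible in $C(\Prim{A})$, which gives the contradiction. Then $\ker\chi_P=M$ by maximality, so $\chi_P=\chi$ directly. The ideal $MA$ and the unfinished first bullet can be dropped.
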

\subsection{Continuous-Trace C*-Algebras}
Before discussing continuous-trace C*-algebras, we need to address the interaction between Morita equivalence and spectra of C*-algebras.
\begin{thm}[{\cite[Theorem~3.29]{RW98}}]\label{moritaspectra}
    Let $A$ and $B$ be Morita equivalent C*-algebras. Then $\hat{A}\cong \hat{B}$.
\end{thm}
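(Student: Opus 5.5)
Given an $A$-$B$ imprimitivity bimodule $X$, I will build the homeomorphism $\hat{A}\cong\hat{B}$ by inducing representations through $X$ (Rieffel induction) in both directions, and show the two constructions are mutually inverse up to unitary equivalence.

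\textbf{Step 1: induction from $B$ to $A$.} Given a representation $\rho\colon B\to B(\cH_\rho)$, form the algebraic tensor product $X\odot\cH_\rho$. Equip it with the semi-inner product
$$\langle x\otimes h, y\otimes k\rangle \coloneq \langle h, \rho(\langle x|y\rangle_B)k\rangle.$$
Positivity follows because the matrix $(\langle x_i|x_j\rangle_B)$ is positive in $M_n(B)$. Quotient by null vectors and complete to get $X\otimes_B\cH_\rho$, and let $A$ act by $a\mapsto a\triangleright(\cdot)\otimes 1$. This action is bounded and a $\ast$-representation by condition (2) of the imprimitivity definition. Call it $X\text{-Ind}\,\rho$. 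The construction respects unitary equivalence. Symmetrically, the conjugate bimodule $\tilde X$, a $B$-$A$ imprimitivity bimodule, induces from $A$ to $B$.

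\textbf{Step 2: the inductions are inverse.} I will show $\tilde X\otimes_A X\otimes_B\cH_\rho\cong\cH_\rho$ via $\tilde y\otimes x\otimes h\mapsto \rho(\langle y|x\rangle_B)h$. This uses condition (3), ${}_A\langle x,y\rangle\triangleright z=x\triangleleft\langle y|z\rangle_B$, to check the map is isometric. Fullness of $\langle X|X\rangle_B$, with $B$ unital so the ideal is all of $B$, gives surjectivity. Hence induction gives mutually inverse bijections on equivalence classes of representations.

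\textbf{Step 3: irreducibility and closed sets.} Irreducibility is preserved because induction is an equivalence of representation categories compatible with the above isomorphisms. It therefore preserves the commutants: $\rho(B)'\cong(X\text{-Ind}\rho)(A)'$ through $T\mapsto 1\otimes T$. So the map descends to a bijection $h\colon\hat B\to\hat A$.

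For continuity, the topologies are pulled back from $\Prim$, so it suffices to match ideals. I would use the Rieffel correspondence: closed ideals $J\subseteq B$ correspond to closed ideals $I={}_A\langle XJ, X\rangle\subseteq A$, with
$$\ker(X\text{-Ind}\,\rho)= {}_A\langle X\ker\rho, X\rangle.$$
This correspondence is an inclusion-preserving lattice isomorphism, so it preserves intersections. It therefore carries the hull-kernel closure condition $\bigcap_{I\in F}I\subseteq P$ to the corresponding condition on the other side. Hence $h$ and $h^{-1}$ are continuous.

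The main obstacle is this kernel computation and the lattice isomorphism of ideals, especially showing that ${}_A\langle XJ,X\rangle$ is closed and that the correspondence is inverse to $I\mapsto\langle IX|X\rangle_B$. The rest is routine verification with the imprimitivity axioms.
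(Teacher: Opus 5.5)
Your proof is correct and is essentially the standard Raeburn--Williams argument that the paper relies on; the paper gives no proof of its own and only cites Theorem~3.29 of Raeburn--Williams. That argument has the same steps as yours: Rieffel induction through $X$ and the conjugate module $\tilde X$ gives mutually inverse bijections that preserve commutants, and continuity follows from $\ker(X\text{-Ind}\,\rho)$ being the Rieffel correspondent of $\ker\rho$, since that correspondence is a lattice isomorphism of closed ideals and so preserves arbitrary intersections. One small fix: in your kernel formula, take the ideal to be the closed span $\overline{\Span}\{{}_A\langle x\triangleleft b, y\rangle : x,y\in X,\ b\in\ker\rho\}$, which makes the closedness you flag automatic by definition.
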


\begin{thm}[{\cite[Corollary 3.33]{RW98}}]
    Suppose $X$ is an $A$-$B$ imprimitivity bimodule. Then $X$ induces a homeomorphism $h_X\colon \Prim{B} \to \Prim{A}$ called the Rieffel homeomorphism. The map $h_X$ is given by, for any $P \in \Prim{B}$, 
    $$h_X(P)=\clo{\Span}\{{_A}\langle x\triangleleft b | y\rangle: x, y \in X, b \in P\}.$$
\end{thm}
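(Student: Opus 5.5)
The plan is to construct $h_X$ as the composite of two maps: the bijection between closed ideals of $B$ and closed ideals of $A$ given by the Rieffel correspondence, followed by a check that this bijection preserves primitivity and inclusions. First, for a closed two-sided ideal $J$ of $B$, set $X\triangleleft J=\clo{\Span}\{x\triangleleft b: x\in X, b\in J\}$. This is a closed sub-bimodule. Then define
$$h_X(J)=\clo{\Span}\{{_A}\langle x\triangleleft b, y\rangle : x,y\in X,\ b\in J\},$$
which matches the formula in the statement and is a closed two-sided ideal of $A$. It is two-sided because ${_A}\langle a\triangleright x, y\rangle = a\,{_A}\langle x,y\rangle$, and its adjoint version gives multiplication on the other side. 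Symmetrically, the conjugate (dual) module $\tilde X$, which is a $B$-$A$ imprimitivity bimodule, gives a map $h_{\tilde X}$ from ideals of $A$ to ideals of $B$. The first main step is to show that $h_{\tilde X}\circ h_X=\id$ and $h_X\circ h_{\tilde X}=\id$. For this I will use axiom (3), ${_A}\langle x,y\rangle\triangleright z=x\triangleleft\langle y|z\rangle_B$, together with fullness. The key identity is $\langle X\triangleleft J \mid X\rangle_B = J$. It follows from $\langle X|X\rangle_B$ being dense in $B$ and $J$ being an ideal, with an approximate-unit argument (or simply $1_B$, since we are unital). Monotonicity of $h_X$ with respect to inclusion is immediate, so $h_X$ is an inclusion-preserving lattice isomorphism between ideals.

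The second step, which I expect to be the main obstacle, is to show that $h_X$ sends primitive ideals to primitive ideals. I will use induced representations. Given an irreducible representation $\pi\colon B\to B(\cH)$ with kernel $P$, form $X\text{-}\mathrm{Ind}\,\pi$ on the completion of $X\odot\cH$ under $\langle x\otimes h, y\otimes k\rangle=\langle h,\pi(\langle x|y\rangle_B)k\rangle$, with $A$ acting on the left. I then need to verify two things.

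For (i), irreducibility: I will show that the commutant of the induced representation is isomorphic to the commutant of $\pi$, using the fact that inducing via $\tilde X$ recovers $\pi$ up to equivalence, since $\tilde X\otimes_A X\cong B$. I expect this to be the most technical point, as it needs the associativity of balanced tensor products and the isomorphism $\tilde X\otimes_A X\cong B$ as $B$-bimodules, built from $\langle\cdot|\cdot\rangle_B$ and fullness.

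For (ii), $\ker(X\text{-}\mathrm{Ind}\,\pi)=h_X(P)$: here one computes that $a$ kills every $x\otimes h$ if and only if $\pi(\langle x|a\triangleright y\rangle_B)=0$ for all $x,y$. By the identity in the first step, this holds if and only if $a\in h_X(P)$.

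Applying the same argument to $\tilde X$ gives the inverse on primitive ideals.

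Finally, continuity. The closure of $F\subseteq\Prim{B}$ consists of the primitive $Q$ with $\bigcap_{I\in F}I\subseteq Q$. Since $h_X$ is an order isomorphism of ideal lattices, it commutes with intersections, giving $h_X(\bigcap F)=\bigcap h_X(F)$. Hence $h_X(\clo F)=\clo{h_X(F)}$, so $h_X$ and its inverse $h_{\tilde X}$ are both closed maps, and $h_X$ is a homeomorphism.
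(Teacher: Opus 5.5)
Your proposal is correct and follows the standard argument behind the cited result in Raeburn--Williams. That argument treats the Rieffel correspondence as an inclusion-preserving bijection of ideal lattices with inverse $h_{\tilde X}$, uses $\ker(X\text{-}\mathrm{Ind}\,\pi)=h_X(\ker\pi)$ together with the fact that induction preserves irreducibility, and gets continuity because an order isomorphism preserves intersections and hence hull--kernel closures. The paper gives no proof of its own and simply quotes \cite[Corollary 3.33]{RW98}, so there is nothing further to compare.
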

Since imprimitivity bimodules induce homeomorphisms of the spectra and primitive ideal spaces, we may expect that the Dauns--Hofmann Theorem can be used to understand the actions of $Z(A)$ and $Z(B)$ on ${_A}X_B$. This is made precise in the following proposition.
\begin{prop}[{\cite[Proposition 5.7]{RW98}}]\label{rieffelhomeoactions}
    Suppose $X$ is an $A$-$B$ imprimitivity bimodule, and $A$ and $B$ are unital C*-algebras with Hausdorff spectrum. With $h_X\colon \Prim{B} \to \Prim{A}$ the Rieffel homeomorphism, we have, for all $f \in C(\Prim{A})$ and $x \in X$,
    $$\mathcal{F}(f)\triangleright x = x\triangleleft \mathcal{F}(f \circ h_X).$$
\end{prop}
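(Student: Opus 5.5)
The plan is to first build, independently of Dauns--Hofmann, a $\ast$-isomorphism $\theta\colon Z(A)\to Z(B)$ characterized by
$$z\triangleright x = x\triangleleft \theta(z)\qquad (z\in Z(A),\ x\in X).$$
Then we identify $\theta$ on the level of functions, using the Rieffel homeomorphism.

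\textbf{Step 1: constructing $\theta$.} Fix $z\in Z(A)$ and consider the operator $L_z\colon x\mapsto z\triangleright x$ on $X$. Because $z$ is central, $L_z$ commutes with the left $A$-action. It is also adjointable for the left inner product:
$${_A}\langle z\triangleright x, y\rangle = z\,{_A}\langle x,y\rangle = {_A}\langle x, z^\dagger\triangleright y\rangle .$$
Regard $X$ as a full left Hilbert $A$-module. For an imprimitivity bimodule between unital algebras, $B^{\op}$ acting on the right is the full algebra of adjointable (here even compact) left $A$-module maps; this is the left-handed version of $A\cong \cL_B(X)$. Hence $L_z$ is right multiplication by a unique $\theta(z)\in B$, and uniqueness uses faithfulness from Remark~\ref{fullimpliesfaithful}. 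Since $L_z$ commutes with right multiplication by every $b\in B$, faithfulness gives $\theta(z)\in Z(B)$. It is then routine that $\theta$ is a $\ast$-homomorphism, and the symmetric construction gives its inverse.

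\textbf{Step 2: a module identity.} Let $P\in\Prim{B}$ and $Q=h_X(P)$. Using the imprimitivity identity ${_A}\langle x,y\rangle\triangleright w = x\triangleleft\langle y|w\rangle_B$, we show
$$\clo{\Span}\, X\triangleleft P = \clo{\Span}\, Q\triangleright X .$$
For ``$\supseteq$'', an element ${_A}\langle x\triangleleft p, y\rangle\triangleright w$ equals $x\triangleleft p\langle y|w\rangle_B$, which lies in $X\triangleleft P$. For ``$\subseteq$'', we write $x = \sum_i x_i\triangleleft\langle y_i|y_i\rangle_B$, which is possible since $\langle X|X\rangle_B = B$ and $B$ is unital. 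Then
$$x\triangleleft p = \sum_i {_A}\langle x_i\triangleleft\langle y_i|y_i\rangle_B\, p,\ \cdot\ \rangle\triangleright \cdots,$$
and a computation of this kind, together with a closure argument, places $x\triangleleft p$ in $\clo{\Span}\,Q\triangleright X$. I expect this step to be the main technical obstacle.

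\textbf{Step 3: evaluating $\theta$ at each primitive ideal.} Take $f\in C(\Prim{A})$ and $z=\mathcal{F}(f)$. By Theorem~\ref{DHthm}, $q_Q(z)=f(Q)q_Q(1)$, so $z-f(Q)1\in Q$. Therefore, for every $x\in X$,
$$x\triangleleft(\theta(z)-f(Q)) = (z-f(Q))\triangleright x \in Q\triangleright X\subseteq \clo{\Span}\, X\triangleleft P .$$
Pairing with an arbitrary $y$ gives $\langle y|x\rangle_B(\theta(z)-f(Q))\in P$. Since $\langle X|X\rangle_B=B$ contains $1$ and $P$ is a closed ideal, this yields $\theta(z)-f(h_X(P))\in P$. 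So $\theta(z)$ and $\mathcal{F}(f\circ h_X)$ have the same image in $B/P$ for every $P$; here $f\circ h_X$ is continuous because $h_X$ is a homeomorphism.

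\textbf{Step 4: conclusion.} Two elements of $B$ whose images agree in $B/P$ for every primitive ideal $P$ are equal, because the primitive ideals intersect to $0$. Hence $\theta(\mathcal{F}(f))=\mathcal{F}(f\circ h_X)$, and substituting into the defining relation of $\theta$ gives the stated formula. The Hausdorff hypothesis is used only so that $\Prim{A}$ and $\Prim{B}$ coincide with the spectra and Theorem~\ref{DHthm} applies in this form.
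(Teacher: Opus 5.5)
The paper does not prove this statement; it quotes it from \cite[Proposition 5.7]{RW98}. Your argument is a correct, self-contained proof, subject to two remarks.

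\textbf{Step 2 is half unnecessary.} Step 3 uses only the inclusion $Q\triangleright X\subseteq \clo{\Span}\,X\triangleleft P$, and your ``$\supseteq$'' computation already establishes it. The reverse inclusion, which you flagged as the main obstacle, is never used.

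Your sketch of that reverse inclusion is also garbled. Writing $x=\sum_i x_i\triangleleft\langle y_i|y_i\rangle_B$ puts the inner product to the left of $p$. That only gives $x\triangleleft p=\sum_i {}_A\langle x_i,y_i\rangle\triangleright(y_i\triangleleft p)$, which lies in $A\triangleright(X\triangleleft P)$, not visibly in $Q\triangleright X$. If you want the inclusion anyway, put the unit on the other side. With $1_B=\sum_i\langle u_i|v_i\rangle_B$ you get $x\triangleleft p=(x\triangleleft p)\triangleleft 1_B=\sum_i {}_A\langle x\triangleleft p,u_i\rangle\triangleright v_i\in\Span\,Q\triangleright X$, with no closure argument.

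\textbf{Step 1 is a detour.} You can work directly in $X$. Both $\mathcal{F}(f)\triangleright x$ and $x\triangleleft\mathcal{F}(f\circ h_X)$ differ from $f(h_X(P))\,x$ by elements of $\clo{\Span}\,X\triangleleft P$. Hence their difference $w$ satisfies $\langle w|w\rangle_B\in\bigcap_P P=0$, so $w=0$, and no $\theta$ is needed.

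Your route through $\theta\colon Z(A)\to Z(B)$ costs an appeal to $\cL_A(X)\cong B$. You can avoid that appeal with the explicit formula $\theta(z)=\sum_i\langle z^\dagger\triangleright u_i|v_i\rangle_B$. In exchange, it gives slightly more: an explicit isomorphism of centers intertwining the two central actions. That is precisely the mechanism the paper relies on in Lemma~\ref{centraliso}.
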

Because the Rieffel homeomorphism intertwines the central actions, we can also consider equivariant bimodules where this homeomorphism is the identity. 
\begin{defi}\label{bimodoverT}
    Let $A$ and $B$ be C*-algebras with $\ast$-isomorphisms $\phi\colon C(T) \to Z(A)$ and $\psi\colon C(T) \to Z(B)$. Furthermore, assume $\hat{A}$ and $\hat{B}$ are Hausdorff. We say an imprimitivity $A$-$B$ bimodule $X$ is an imprimitivity bimodule \emph{over $T$} if, for all $f \in C(T)$ and $x \in X$,
    $$\phi(f)\triangleright x = x\triangleleft \psi(f).$$
\end{defi}

\begin{prop}[{\cite[Proposition 5.7]{RW98}}]
    With the hypotheses of Definition~\ref{bimodoverT}, $X$ is an $A$-$B$ imprimitivity bimodule over $T$ if and only if the following diagram commutes:
    \begin{center}
    \begin{tikzcd}
        \Prim{B} \arrow[r, "h_X"] \arrow[d, "\hat{\psi}"'] & \Prim{A} \arrow[dl, "\hat\phi"] \\
        T & 
    \end{tikzcd}
    \end{center}
\end{prop}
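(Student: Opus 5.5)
We need to prove that $X$ is an imprimitivity bimodule over $T$ if and only if $\hat\phi\circ h_X=\hat\psi$. Here $\hat\phi\colon \Prim{A}\to T$ is the composite of the Dauns--Hofmann map $\Prim{A}\to\widehat{Z(A)}$ with the homeomorphism $\widehat{Z(A)}\cong T$ induced by $\phi$, and $\hat\psi$ is defined likewise. Since $\hat A$ and $\hat B$ are Hausdorff, $\Prim{A}\cong\widehat{Z(A)}$ by the Stone--\v{C}ech statement above, so $\hat\phi$ and $\hat\psi$ are homeomorphisms. Concretely, write $\mathcal F_A\colon C(\Prim A)\to Z(A)$ for the Dauns--Hofmann isomorphism. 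Then $\phi(f)=\mathcal F_A(f\circ\hat\phi)$ for $f\in C(T)$, because $\phi(f)$ acts on $A/P$ as the scalar $f(\hat\phi(P))$. Similarly $\psi(f)=\mathcal F_B(f\circ\hat\psi)$.

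The plan is to translate both conditions into statements about $\mathcal F$ and then use Proposition~\ref{rieffelhomeoactions}. For $f\in C(T)$ that proposition gives
$$\phi(f)\triangleright x=\mathcal F_A(f\circ\hat\phi)\triangleright x = x\triangleleft \mathcal F_B(f\circ\hat\phi\circ h_X).$$
So $X$ is over $T$ if and only if, for all $f$ and $x$,
$$x\triangleleft \mathcal F_B(f\circ\hat\phi\circ h_X)=x\triangleleft\mathcal F_B(f\circ\hat\psi).$$

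For the direction ($\Leftarrow$), if the diagram commutes then the two functions inside $\mathcal F_B$ are equal, and we are done.

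For the direction ($\Rightarrow$), we get $x\triangleleft \mathcal F_B(g)=0$ for all $x$, where $g=f\circ\hat\phi\circ h_X-f\circ\hat\psi$. The right Hilbert $B$-module is full, so by Remark~\ref{fullimpliesfaithful} the right action is faithful. Hence $\mathcal F_B(g)=0$, and since $\mathcal F_B$ is injective, $g=0$. Thus $f\circ(\hat\phi\circ h_X)=f\circ\hat\psi$ for every $f\in C(T)$. Because $T$ is compact Hausdorff, Urysohn's lemma says $C(T)$ separates points, so $\hat\phi\circ h_X=\hat\psi$.

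The only real subtlety is the first step: we must identify $\phi$ with $\mathcal F_A$ composed with $\hat\phi$ in a way compatible with the defining property of Dauns--Hofmann. To do this, compare the actions on each quotient $A/P$. Both $\phi(f)$ and $\mathcal F_A(f\circ\hat\phi)$ are central and act on $A/P$ by the same scalar, namely the value of the character $\restrict{\lambda}{Z(A)}$ for $\ker\lambda = P$. Two elements of $A$ with the same image in every primitive quotient are equal, because the primitive ideals intersect to zero. Hence the two central elements coincide.
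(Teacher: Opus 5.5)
Your argument is correct. The paper gives no proof of its own here and simply cites \cite[Proposition 5.7]{RW98}; your derivation is the standard way to extract this equivalence from the intertwining formula of Proposition~\ref{rieffelhomeoactions}: rewrite $\phi(f)=\mathcal{F}_A(f\circ\hat\phi)$ and $\psi(f)=\mathcal{F}_B(f\circ\hat\psi)$, apply that formula, then use faithfulness of the right $B$-action, injectivity of $\mathcal{F}_B$, and the fact that $C(T)$ separates points (the observation that $\hat\phi$ and $\hat\psi$ are homeomorphisms is not actually needed).
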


We are now ready to work towards the definition of continuous-trace C*-algebras. We first need to describe how traces on irreducible representations interact with elements of $A$.

\begin{defi}
    Let $A$ be a C*-algebra and $a \in A^+$. We define the function $\Tr_a\colon \hat{A} \to \bR^+\cup \{\infty\}$ by 
    $$[\lambda] \mapsto \Tr(\lambda(a))$$
    where $\Tr$ is the (unnormalized) trace on $B(\mathcal{H_\lambda})$. In the special case when $a=1_A$, we write $\dim_A$ instead of $\Tr_{1_A}$, as the trace of the unit detects the dimension of the representation.
\end{defi}

In general, these functions are not continuous on $\hat{A}$. They are, however, always lower semi-continuous.

\begin{prop}[{\cite[Proposition 3.5.9]{D77}}]\label{lowersemicts}
    For any C*-algebra $A$, the functions $\{\Tr_a: a \in A^+\}$ are lower semi-continuous on $\hat{A}$.
\end{prop}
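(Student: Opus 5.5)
The statement to prove is Proposition~\ref{lowersemicts}: for $a\in A^+$, the function $\Tr_a\colon \hat{A}\to\bR^+\cup\{\infty\}$ is lower semi-continuous. The plan is to write $\Tr_a$ as a pointwise supremum of continuous functions. A supremum of lower semi-continuous functions is lower semi-continuous, so this suffices.

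First fix $[\lambda]\in\hat{A}$ and an orthonormal basis $(e_i)$ of $\cH_\lambda$. Since $\lambda(a)\ge0$, we have $\Tr(\lambda(a))=\sum_i\langle \lambda(a)e_i,e_i\rangle$, and this sum equals the supremum over finite rank projections $p$ of $\Tr(p\lambda(a)p)$. The quantity does not depend on the representative of $[\lambda]$, by unitary invariance of the trace. Writing $a=b^\dagger b$ with $b=a^{1/2}$ expresses $\Tr(\lambda(a))$ as $\sum_i\norm{\lambda(b)e_i}^2$.

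The cleanest way to work on the spectrum is through the hull-kernel topology, using the fact that $[\lambda]\mapsto\norm{\lambda(c)}$ is lower semi-continuous on $\hat{A}$ for every $c\in A$. To see this, let $F$ be the set of $[\lambda]$ with $\norm{\lambda(c)}\le t$ and let $J=\bigcap_{[\rho]\in F}\ker\rho$. Then $\norm{c+J}\le t$, because $\bigoplus_{[\rho]\in F}\rho$ is faithful on $A/J$. If $[\lambda]\in\clo F$, then $J\subseteq\ker\lambda$, so $\lambda$ factors through $A/J$ and $\norm{\lambda(c)}\le\norm{c+J}\le t$. Hence $F$ is closed.

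The key step, and the main obstacle, is upgrading from norms to traces. Here I would use the characterization $\Tr(\lambda(a))\ge$ the sum of the $n$ largest eigenvalues of $\lambda(a)$, counted with multiplicity, with equality in the limit. Since lower semi-continuity only requires handling the set $\{\Tr_a>t\}$, it suffices to show the following: if $\Tr(\lambda(a))>t$, then for all $[\rho]$ near $[\lambda]$ we have $\Tr(\rho(a))>t$. Pick $n$ and orthonormal $e_1,\dots,e_n$ with $\sum_i\langle\lambda(a)e_i,e_i\rangle>t$.

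By Kadison transitivity, applied to the irreducible $\lambda$, there are elements $x_i\in A$ with $\lambda(x_i)$ acting as a partial isometry that moves a fixed unit vector $\xi$ to $e_i$. Consider the matrix $M=[x_i^\dagger a x_j]_{ij}\in M_n(A)$, or more simply the positive element $c=\sum_i x_i^\dagger a x_i$ compressed suitably.

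The $n$-th largest eigenvalue functional, or equivalently the Ky Fan $n$-norm $\norm{\cdot}_{(n)}$, is a supremum of quantities like $\norm{\lambda\otimes\id_n(\,\cdot\,)}$. I would therefore express $\sum_{i\le n}\lambda_i(\lambda(a))$, the sum of the $n$ largest eigenvalues, as $\sup\norm{\lambda(y^\dagger a y)}$ over suitable $y$. This reduces the claim to lower semi-continuity of norms, which was shown above. Making this reduction precise is where I expect the difficulty. The fallback is Dixmier's route: show that $\Tr(\lambda(a))=\sup_{u}\,\Tr(\lambda(u^\dagger a u))$ over $u$ for which $\lambda(u)$ has finite rank, and handle the finite-rank traces via the continuity of $[\lambda]\mapsto\norm{\lambda(\cdot)}$ applied to $n\times n$ matrix amplifications.
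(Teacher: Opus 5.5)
\textbf{There is a genuine gap: the norm-to-trace step, which is the whole content of the proposition, is left open, and none of your three suggested mechanisms works as written.} Your preliminary reductions are sound. $\Tr\lambda(a)$ is the supremum of $\sum_{i\le n}\langle\lambda(a)e_i,e_i\rangle$ over finite orthonormal families, a supremum of lower semi-continuous functions is lower semi-continuous, and your proof that $[\rho]\mapsto\norm{\rho(c)}$ is lower semi-continuous on $\hat A$ is correct. The paper gives no argument for this step and defers to Dixmier. You say yourself that ``making this reduction precise is where I expect the difficulty.''

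\begin{itemize}
\item \emph{Norms of compressions.} If $y\in A$ and $\norm{y}\le1$, then $\norm{\lambda(y^\dagger a y)}\le\norm{\lambda(a)}$. The same bound holds for $\norm{[x_i^\dagger a x_j]_{ij}}$ when the row $(x_1,\dots,x_n)$ is contractive. Without a norm bound the supremum is infinite. So no supremum of this kind recovers a sum of $n$ diagonal terms.
\item \emph{The element $c=\sum_i x_i^\dagger a x_i$.} If $\lambda(x_i)\xi=e_i$, then indeed $\norm{\lambda(c)}\ge\sum_i\langle\lambda(a)e_i,e_i\rangle$. But lower semi-continuity of $[\rho]\mapsto\norm{\rho(c)}$ transfers to $\Tr_a$ only if you also know $\norm{\rho(c)}\le\Tr\rho(a)$ for every $\rho$. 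Choosing each $x_i$ separately by Kadison transitivity gives no such bound, since $\lambda(x_i)$ only agrees with a partial isometry on a finite-dimensional subspace. Already in $M_2(\bC)$, with $a=e_{11}$, $\xi=e_1$ and $x_1=e_{11}+5e_{12}$, one gets $\norm{c}=26>1=\Tr a$.
\item \emph{The fallback formula.} The claim $\Tr\lambda(a)=\sup\{\Tr\lambda(u^\dagger a u):\lambda(u)\text{ of finite rank}\}$ is false in general. For an irreducible representation of a simple unital infinite-dimensional algebra (e.g.\ the CAR algebra), $\lambda(A)\cap\mathcal{K}(\cH_\lambda)=0$. There the supremum is $0$, while $\Tr\lambda(a)=\infty$ for every nonzero $a\in A^+$.
\end{itemize}

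The missing idea is a \emph{joint} normalization of the $x_i$, together with transitivity applied in $M_n(A)$. This is the precise form of your Ky Fan heuristic.

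\begin{itemize}
\item \emph{Upper bound.} Let $T\ge0$ and let $Z_1,\dots,Z_n\in B(\cH)$ satisfy $\norm{\sum_i Z_iZ_i^\dagger}\le1$. For a unit vector $\eta$, the map $W\colon\bC^n\to\cH$, $\alpha\mapsto\sum_j\alpha_jZ_j\eta$, factors through the contractive row $(Z_1,\dots,Z_n)$, so $\norm{W}\le1$. Hence $\langle\sum_iZ_i^\dagger TZ_i\eta,\eta\rangle=\Tr(T^{1/2}WW^\dagger T^{1/2})\le\Tr T$, and so $\norm{\sum_iZ_i^\dagger TZ_i}\le\Tr T$.
\item \emph{Attainment.} Apply the transitivity theorem with norm control to the irreducible representation $\lambda\otimes\id_n$ of $M_n(A)$. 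Take as target the norm-one operator whose first row is $(\theta_1,\dots,\theta_n)$, where $\theta_i(\eta)=\langle\eta,\xi\rangle e_i$, with zeros elsewhere; a bound $\norm{Z}\le1+\varepsilon$ also suffices after rescaling. Let $(z_1,\dots,z_n)$ be the first row of the resulting $Z$. It satisfies $\norm{\sum_jz_jz_j^\dagger}\le\norm{Z}^2\le1$ and $\lambda(z_j)\xi=e_j$. Therefore $\norm{\lambda(\sum_jz_j^\dagger az_j)}\ge\sum_j\langle\lambda(a)e_j,e_j\rangle$.
\end{itemize}

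Together these give, for every $[\rho]\in\hat A$, $\Tr\rho(a)=\sup\norm{\rho(\sum_jz_j^\dagger az_j)}$, the supremum taken over all $n$ and all contractive rows in $A$. Your norm lemma then finishes the proof. Without these two ingredients, or the standard alternative of approximating orthonormal families in nearby representations via weak containment, the proposal does not establish the statement.
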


There are three equivalent definitions of continuous-trace C*-algebras. We will use the definition involving continuous-trace elements and $\Tr_a$.

\begin{defi}
    Let $A$ be a C*-algebra with Hausdorff spectrum. A positive element $a \in A^+$ is said to be a \emph{continuous-trace} element if $\Tr_a$ is a continuous function on $\hat{A}$.
\end{defi}

The span of the continuous-trace elements of a C*-algebra forms a two-sided (not necessarily closed) ideal $J$. The closure of this ideal then determines when an algebra has continuous-trace.

\begin{defi}
    A (generally non-unital) C*-algebra $A$ with Hausdorff spectrum is said to have \emph{continuous-trace} if the ideal $J$, generated by continuous-trace elements, is dense in $A$.
\end{defi}

\begin{rmk}\label{unitalctstrace}
    Observe that, given a unital C*-algebra $A$, $A$ has continuous-trace if and only if $1_A$ is a continuous-trace element. This is because the set of invertible elements $A^\times$ is an open subset of $A$, and so there is an invertible element in $J$ when $\clo{J}=A$. Furthermore, notice $\dim_A$ takes values in $\bN \cup \{\infty\}$. Therefore, $1_A$ is a continuous-trace element if and only if $\dim_A$ is constant on each component of $\hat{A}$.
\end{rmk}

A special class of continuous-trace C*-algebras is the class of homogeneous C*-algebras.
\begin{defi}
    A C*-algebra $A$ is \emph{homogeneous} if the dimension of each irreducible representation is the same natural number $n<\infty$.
\end{defi}

Homogeneous C*-algebras are automatically continuous-trace. When a homogeneous C*-algebra has compact spectrum, it is automatically unital (\cite[Theorem~3.2]{F61}). These will be important because unital continuous-trace C*-algebras are homogeneous on each connected component of their spectrum (see Remark~\ref{unitalctstrace}). We now present the Dixmier--Douady classification of continuous-trace C*-algebras.

\begin{thm}[{\cite[Theorem~5.29]{RW98}}]
    To each continuous-trace C*-algebra $A$ with Hausdorff spectrum $T$, there is an associated element $\delta(A)\in H^3(T;\bZ)$ called the Dixmier--Douady class of $A$. Two continuous-trace C*-algebras with spectrum $T$ are Morita equivalent over $T$ if and only if $\delta(A)=\delta(B)$.
\end{thm}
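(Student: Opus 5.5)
The plan follows the standard Raeburn--Williams route. The idea is to build $\delta(A)$ as the obstruction to gluing local Morita equivalences between $A$ and $C_0(T)$ into a global one. Throughout, let $\mathcal{S}$ denote the sheaf of germs of continuous $\mathbb{T}$-valued functions on $T$. The exponential sequence $0\to\bZ\to\mathcal{R}\to\mathcal{S}\to 0$, with $\mathcal{R}$ the fine sheaf of real-valued continuous germs, gives an isomorphism $\check H^2(T;\mathcal{S})\cong H^3(T;\bZ)$. This uses that $T$ is paracompact: it is locally compact Hausdorff, and compact in our unital setting by Proposition~\ref{prop:spectrumcompact}. So it suffices to produce a class in $\check H^2(T;\mathcal{S})$.

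\textbf{Step 1 (local triviality).} Since $A$ has continuous trace, it satisfies Fell's condition. That is, every $t\in T$ has a neighbourhood $U$ and an element $p\in A$ with $\lambda(p)$ a rank-one projection for all $[\lambda]\in U$; we obtain $p$ from a continuous-trace element near $t$ by continuous functional calculus. Then $X_U=\clo{A_U p}$, where $A_U$ is the ideal of $A$ with spectrum $U$, is an $A_U$--$C_0(U)$ imprimitivity bimodule over $U$. Indeed, $pA_Up\cong C_0(U)$ because the fibres are one-dimensional and the Dauns--Hofmann identification (Theorem~\ref{DHthm}) is available. Choose a finite cover $\{U_i\}$ of this kind.

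\textbf{Step 2 (the cocycle).} On $U_{ij}$, the module $\widetilde{X_i}\otimes_{A}X_j$ is a $C_0(U_{ij})$--$C_0(U_{ij})$ imprimitivity bimodule over $U_{ij}$, which is the sections of a Hermitian line bundle. After refining the cover so these are trivial, pick unitary trivializations $u_{ij}$. The composite $u_{ij}u_{jk}u_{ik}^{-1}$ on $U_{ijk}$ is then a bimodule automorphism over $U_{ijk}$, i.e.\ multiplication by some $\lambda_{ijk}\in C(U_{ijk},\mathbb{T})$. One checks that $\{\lambda_{ijk}\}$ is a \v{C}ech 2-cocycle, and we set $\delta(A)=[\lambda]$.

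\textbf{Step 3 (well-definedness).} Changing the $u_{ij}$ changes $\lambda$ by a coboundary. Changing the $X_i$ only matters up to line bundles, which are again trivial on a refinement. Passing to common refinements handles the choice of cover. Finally, $\delta$ depends only on the Morita class over $T$, because an $A$--$B$ imprimitivity bimodule $Y$ over $T$ carries local data for $A$ to local data $\widetilde Y\otimes X_i$ for $B$ with the same $\lambda$.

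\textbf{Step 4 (converse).} Suppose $\delta(A)=\delta(B)$. Pass to a common refinement with local bimodules $X_i$ for $A$ and $Y_i$ for $B$, and set $Z_i=X_i\otimes\widetilde{Y_i}$, which are $A_{U_i}$--$B_{U_i}$ imprimitivity bimodules over $U_i$. Their transition data have cocycle $\lambda^A(\lambda^B)^{-1}$, a coboundary $\partial\mu$. Correcting the gluing maps by $\mu^{-1}$ makes them satisfy the cocycle identity, so the $Z_i$ glue to a global $A$--$B$ bimodule, using a partition of unity subordinate to $\{U_i\}$. This bimodule is full on each side, and condition (3) holds locally, hence globally.

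The main obstacle is Step 4, together with Step 1. Gluing Hilbert bimodules from local pieces with coherent isomorphisms needs a careful construction, via the sections of the resulting ``bundle'' with inner products patched by the partition of unity. One must also verify the imprimitivity axioms after gluing. I would try the realization of $Z$ as a closed submodule of $\bigoplus_i Z_i$ cut out by the compatibility conditions.
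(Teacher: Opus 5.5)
Your outline is correct and is essentially the Raeburn--Williams argument that the paper quotes without proof: local $A_{U_i}$--$C_0(U_i)$ imprimitivity bimodules from Fell's condition, overlap line bundles trivialized on a refinement to give a $\mathbb{T}$-valued \v{C}ech $2$-cocycle, and gluing of the corrected local $A$--$B$ bimodules for the converse. The one point to state more carefully is the gluing: since elements of $Z_i$ vanish at infinity on $U_i$, the compatibility conditions cutting out your submodule of $\bigoplus_i Z_i$ should be weighted by functions $\psi_i$ with $\operatorname{supp}\psi_i\subseteq U_i$ and $\sum_i\psi_i^2=1$, namely $\psi_j z_i=h_{ij}(\psi_i z_j)$, and this submodule is exactly the range of the self-adjoint projection with entries $z_j\mapsto h_{ij}(\psi_i\psi_j z_j)$ (by the cocycle identity and $h_{ij}^\dagger=h_{ji}$).
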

Equivalence classes of continuous-trace algebras form a group, where the group operation is given by the following relative tensor product. However, the assumption of continuous-trace is not necessary for the construction of this tensor product. We refer the reader to Appendix T in \cite{WO93} for the background on the max tensor product of C*-algebras.

\begin{defi}\label{tensorprodoverT}
    Given unital C*-algebras $A$ and $B$ with central $\ast$-homomorphisms $\phi \colon C(T)\to Z(A)$ and $\psi\colon C(T) \to Z(B)$, we write $A \otimes_T B$ for the C*-algebra $(A \otimes_\text{max}B)/I_T$, where $I_T$ is the balancing ideal generated by elements of the form
    $$a \phi(f)\otimes b - a \otimes \psi(f)b \qquad \text{ for } a \in A, b \in B, f \in C(T).$$
\end{defi}

Continuous-trace C*-algebras implicitly have such central $\ast$-homomorphisms due to the identifications of their spectra with $T$. This allows for the group of Morita equivalence classes of continuous-trace algebras to be defined as follows:

\begin{defi}[{\cite[Theorem~6.3]{RW98}}]
    Given a compact Hausdorff space $T$, define the \emph{Brauer group} of $T$, denoted $\text{Br}(T)$, to be the group whose elements are Morita equivalence classes (over $T$) of (non-unital) continuous-trace C*-algebras with spectrum $T$. The group operation is given by $[A][B]=[A\otimes_TB]$. The identity element is $[C(T)]$, and $[A]^{-1}=[A^\op]$.
\end{defi}

Considering non-unital continuous-trace C*-algebras is important for the Brauer group, as it allows for the Dixmier--Douady class to be a surjective map onto $H^3(T;\bZ)$. When restricting to unital continuous-trace algebras, the (unital) Brauer group surjects onto the torsion subgroup $H^3_{\tor}(T;\bZ)$ (see Lemma~\ref{dixmierdouady1mors}).

\begin{thm}[{\cite[Theorem~6.3]{RW98}}]\label{brauergroupiso}
    The map $\delta\colon \text{Br}(T)\to H^3(T;\bZ)$ given by $[A]\mapsto \delta(A)$ is a group isomorphism.
\end{thm}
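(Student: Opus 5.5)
This is the Dixmier--Douady theorem from Raeburn--Williams, quoted as background. A self-contained proof is long, so the plan follows the standard route in \cite{RW98}: construct $\delta$, check that it is well defined on Morita classes and a homomorphism, then prove injectivity and surjectivity.

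\textbf{Construction of $\delta$.} A continuous-trace algebra $A$ with spectrum $T$ is locally Morita equivalent over $T$ to $C_0(U)$. That is, there is an open cover $\{U_i\}$ of $T$ and imprimitivity $A|_{U_i}$--$C_0(U_i)$ bimodules $X_i$ over $U_i$. On overlaps, $\widetilde X_i\otimes X_j$ is a $C_0(U_{ij})$--$C_0(U_{ij})$ imprimitivity bimodule over $U_{ij}$, hence a line bundle $L_{ij}$. After refining the cover so that every $L_{ij}$ is trivial, the chosen trivializations give unitary functions $\lambda_{ijk}\colon U_{ijk}\to \mathbb{T}$, and these form a \v{C}ech $2$-cocycle in the sheaf of germs of $\mathbb{T}$-valued functions. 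Since the sheaf of germs of $\bR$-valued functions is fine, the exponential sequence gives $\check H^2(T;\mathcal{S})\cong H^3(T;\bZ)$. We define $\delta(A)$ to be the image of $[\lambda]$.

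\textbf{Well-definedness and homomorphism.} The class is independent of the choices of $X_i$, trivializations and refinement, since changing any of these alters $\lambda$ by a coboundary. If $Y$ is an $A$--$B$ imprimitivity bimodule over $T$, then $Y\otimes X_i^B$ are admissible local bimodules for $A$ that produce the same cocycle, so $\delta$ descends to $\text{Br}(T)$. For multiplicativity, $X_i^A\otimes_{T} X_i^B$ give local trivializations of $A\otimes_T B$, and the resulting cocycle is the product of the two cocycles.

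\textbf{Injectivity.} If $\delta(A)=0$, refine so that $\lambda_{ijk}=\mu_{jk}\mu_{ik}^{-1}\mu_{ij}$, and twist the gluing maps by the $\mu_{ij}$. The local bimodules $\widetilde X_i$ then glue, by a partition-of-unity argument over $T$, to a global $A$--$C(T)$ imprimitivity bimodule over $T$. I expect this gluing, which needs the $C_0(T)$-module (sheaf-like) structure of Hilbert modules, to be the most delicate step.

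\textbf{Surjectivity.} This is the main obstacle. Given a class in $H^3(T;\bZ)\cong \check H^2(T;\mathcal{S})$, realize it as a cocycle $\lambda_{ijk}$ on a locally finite cover. Then build the algebra of compact operators $\mathcal{K}(\mathcal{H})$ on the Hilbert $C_0(T)$-module obtained by twisting $\bigoplus_i C_0(U_i)\otimes\ell^2$ by $\lambda$, or equivalently form the algebra of sections of a bundle with fibre $\mathcal{K}$ whose transition automorphisms lift to projective unitaries with obstruction $\lambda$. One must show that this algebra has continuous trace, has spectrum $T$, and has Dixmier--Douady class exactly the given one.

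Using infinite-dimensional $\ell^2$ (the stabilized, non-unital algebras) is essential here. It is what allows every class, not only the torsion ones, to be realized, since a projective unitary bundle with $\mathcal{K}$ fibres can absorb an arbitrary cocycle. This is why the theorem is stated for non-unital continuous-trace algebras.
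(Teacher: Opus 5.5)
The paper gives no argument for this statement; it quotes \cite[Theorem~6.3]{RW98}. Your outline through injectivity is the standard route to that result and is fine as a sketch: local Morita equivalence with $C_0(U_i)$, line bundles $L_{ij}$ on overlaps, a $\mathbb{T}$-valued \v{C}ech $2$-cocycle, the exponential sequence, and gluing when the class vanishes.

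The genuine problem is surjectivity. Your first construction cannot work. If $H$ is any full Hilbert $C(T)$-module, then $H$ is itself a $\mathcal{K}(H)$--$C(T)$ imprimitivity bimodule over $T$: adjointable operators are $C(T)$-linear, so right multiplication by $f\in C(T)$ is central in $\mathcal{L}_{C(T)}(H)$. Hence $[\mathcal{K}(H)]=[C(T)]$ in $\mathrm{Br}(T)$ and $\delta(\mathcal{K}(H))=0$. Put differently, a $2$-cocycle that is not a coboundary cannot glue modules, so there is no Hilbert $C(T)$-module ``twisted by $\lambda$'', and your two descriptions are not equivalent. The projective-unitary description is the right one. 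But you give no way to produce transition automorphisms that satisfy the cocycle condition exactly and have lifting obstruction $\lambda$, and that is the whole content of surjectivity.

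One way to supply the missing step is as follows. Let $E=\bigoplus_k C_0(U_k)$, and let $V_{ij}$ be the unitary of $E|_{U_{ij}}=\bigoplus_k C_0(U_{ijk})$ that multiplies the $k$-th summand by $\lambda_{ijk}$. The cocycle identity $\lambda_{ijl}\lambda_{jkl}=\lambda_{ijk}\lambda_{ikl}$ gives $V_{ij}V_{jk}=\lambda_{ijk}V_{ik}$. So the automorphisms $\operatorname{Ad}V_{ij}$ of $\mathcal{K}(E|_{U_{ij}})$ satisfy the cocycle condition on the nose, and they glue the algebras $\mathcal{K}(E|_{U_i})$ into a $C(T)$-algebra $A$. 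Each $E|_{U_i}$ is full over $C_0(U_i)$, since it contains the summand $C_0(U_i)$. Thus $A$ is locally Morita equivalent to $C_0(U_i)$ with gluing unitaries $V_{ij}$, so $A$ has continuous trace, spectrum $T$, and $\delta(A)=[\lambda]$. Concretely, $A$ is the algebra of matrices $(f_{ij})$ with $f_{ij}\in C_0(U_{ij})$ and $\lambda$-twisted product $(f\ast g)_{ik}=\sum_j\lambda_{ijk}f_{ij}g_{jk}$; after stabilization this is your $\mathcal{K}$-bundle picture.

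This construction also shows your closing remark is off. Over a finite cover, which suffices since $T$ is compact, the fibre of $A$ at $t$ is $M_{n(t)}$ with $n(t)=\#\{k: t\in U_k\}$. So infinite-dimensionality is not what lets non-torsion classes appear. What matters is that the fibre dimension need not be locally constant, so $A$ need not be unital. That is what defeats the determinant argument that forces torsion in the homogeneous case.
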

We refer the reader to \cite{RW98} for a thorough exposition on continuous-trace algebras.
\subsection{\texorpdfstring{C*}{C*}-categories}
We now give the necessary background on C*-categories.
\begin{defi}
A C*-category $C$ consists of the following data.
\begin{itemize}
\item A category $C$. For objects $X,Y \in C$, we will denote the corresponding hom space by $C(X \to Y)$. We will use lowercase $f,g,\hdots$ for elements of $C(X \to Y)$ and use a contravariant $\circ$ to denote their composition.
\item A dagger structure $\dag$ on $C$. This consists of conjugate linear maps 
$$\dag \colon \overline{C(X \to Y)} \to C(Y \to X)$$
satisfying $(f \circ g)^\dag = g^\dag \circ f^\dag$ and $f^{\dag \dag} = f$.
\end{itemize}
We require that the dagger structure equips each endomorphism algebra $\End(X) \coloneqq C(X \to X)$ with the structure of a C*-algebra and that $f^\dag \circ f$ is positive in $\End(X)$ for $f \in C(X \to Y)$. In particular, each hom-space $C(X \to Y)$ is equipped with the structure of a Banach space through the norm
$$\|f \| \coloneqq \|f^\dag \circ f\|^{1/2}_{\End(X)} \qquad\qquad f \in C(X \to Y).$$
\end{defi}

\begin{defi}
A C*-2-category $\mathcal{C}$ consists of the following data.
\begin{itemize}
\item A 2-category $\mathcal{C}$. For objects $A,B \in \mathcal{C}$, we denote the corresponding hom category by $\mathcal{C}(A \to B)$. For $X,Y \in \mathcal{C}(A \Rightarrow B)$, we denote the corresponding hom space by $\mathcal{C}({}_AX_B \Rightarrow {}_AY_B)$. We will use lowercase $f,g,\hdots$ for elements of $\mathcal{C}({}_AX_B \Rightarrow {}_AY_B)$. Furthermore, we will use a contravariant $\circ$ for the composition of these 2-morphisms, whereas we will use a covariant $\otimes$ for the composition associated to 1-morphisms, i.e.,
$$
X \in \mathcal{C}(A \to B), \quad  Y \in \mathcal{C}(B \to C), \qquad X \otimes Y \in \mathcal{C}(A \to C).
$$
\item The structure of a C*-category on each hom category $\mathcal{C}({}_AX_B \Rightarrow {}_AY_B)$. 
\end{itemize}
We require that this data satisfies $(f \otimes g)^\dag = f^\dag \otimes g^\dag$ and that the associators and unitors associated to $\otimes$ are unitary. 
\end{defi}

\begin{defi}[{\cite{Ferrer24}}]
A C*-3-category $\mathfrak{C}$ consists of the following data.
\begin{itemize}
\item An algebraic tricategory $\mathfrak{C}$. For objects $S,T \in \mathfrak{C}$, we denote the corresponding hom 2-category by $\mathfrak{C}(S \to T)$. For $A,B \in \mathfrak{C}(S \to T)$, we denote the corresponding hom 2-category by $\mathfrak{C}({}_SA_T \Rightarrow {}_SB_T)$. Finally, for $X,Y \in  \mathfrak{C}({}_SA_T \Rightarrow {}_SA_T)$ we denote the corresponding hom category by $\mathfrak{C}({}_AX_B \Rrightarrow {}_AY_B)$. We will use lowercase $f,g,\hdots$ for elements of $ \mathfrak{C}({}_AX_B \Rrightarrow {}_AY_B)$. Furthermore, we will use a contravariant $\circ$ for the composition of these 3-morphisms, whereas we will use a covariant $\otimes$ and $\boxtimes $ for the compositions associated to 2-morphisms and 1-morphisms respectively. 
\item The structure of a C*-2-category on each 2-category $\mathfrak{C}(S \to T)$. 
\end{itemize}
We require that this data satisfies $(f \boxtimes g)^\dag = f^\dag \boxtimes g^\dag$ and that the higher coherence isomorphisms associated to $\boxtimes$ are unitary.
\end{defi}

The present work focuses on the following example of a C*-3-category as seen in Section~C of \cite{Ferrer24} (where it is denoted $\mathsf{CHaus}$).
\begin{defi}
There is a C*-3-category $\AbCAlg$ consisting of
\begin{enumerate}
\item[(0)] An object in $\AbCAlg$ is an abelian C*-algebra $C(T)$ with $T$ a compact Hausdorff space. We will often use $C(T)$ and $T$ interchangeably in our notation.
\item[(1)] A 1-morphism $A \in \AbCAlg(S \to T)$ is a unital (generally noncommutative) C*-algebra $A$ equipped with central maps $C(S) \xrightarrow{\phi} Z(A) \xleftarrow{\psi} C(T)$. We will often denote this data by ${}_\phi A _\psi$ when the source and target $C(S)$ and $C(T)$ are clear.
\item[(2)] A 2-morphism $X \in \AbCAlg({}_SA_T \Rightarrow {}_SB_T)$ is a C*-correspondence ${}_AX_B$ compatible with the central inclusions of $C(S)$ and $C(T)$ into $A$ and $B$.
\item[(3)] A 3-morphism $f \in \AbCAlg({}_AX_B \Rrightarrow {}_AY_B)$ is an adjointable $A$-$B$ bimodule map $f \colon X \to Y$.
\item[($\boxtimes$)] The composition at the level of 1-morphisms ${}_\phi A_\psi \in \AbCAlg(S \to T)$ and ${}_\mu B_\nu \in \AbCAlg(T \to U)$ is determined by the relative max tensor product $A \otimes_T B$ (Definition~\ref{tensorprodoverT}), which is the pushout of the following diagram:
\[
\begin{tikzcd}
C(T) \arrow[r,"\psi \otimes 1_B"] \arrow[d,"1_A \otimes \mu"'] & A \otimes_{\max} B \arrow[d,dashed] \\
A \otimes_{\max} B \arrow[r,dashed] & A \otimes_T B
\end{tikzcd}
\]
\item[($\otimes$)] Similarly, the composition at the level of 2-morphisms ${}_AX_B, {}_B Y_C$ is given by the relative max tensor product ${}_A X \otimes_B Y_C$. 
\item[($\circ$)] Composition at the level of 3-morphisms $f,g$ is simply given by function composition $g \circ f$.
\item[($\dag$)] The dagger of an adjointable bimodule map $f \colon {}_A X_B \to {}_A Y_B$ is simply given by its adjoint $f^\dag \colon {}_A Y_B \to {}_A X_B$.
\end{enumerate}
The necessary constraint data to promote $\AbCAlg$ into a C*-3-category is induced by maps obtained from the universal property the relative max-tensor product enjoys. We refer the interested reader to \cite{Ferrer24} for the remaining details.
\end{defi}
\begin{rmk}\label{moritainvertible}
    We note that if $X \colon A \Rightarrow B$ is an invertible $2$-morphism in $\AbCAlg$, then $X$ is actually an $A$-$B$ imprimitivity bimodule (and so is its inverse) by Lemma 2.4 in \cite{EKQR06}.
\end{rmk}
We now give the definition of the homotopy groups in a $3$-category.
\begin{defi}
    Let $\mathfrak{C}$ be a $3$-category and $T$ an object in $\mathfrak{C}$.
    \begin{enumerate}
        \item $\pi_0(\mathfrak{C})$ is defined to be the set $\text{Ob}(\mathfrak{C})$ up to the equivalence relation induced by the $1$-, $2$-, and $3$-morphisms.
        \item $\pi_1(\mathfrak{C},T)$ is the group of invertible $1$-morphisms $T\to T$ up to the equivalence relation induced by $2$- and $3$-morphisms.
        \item $\pi_2(\mathfrak{C}, T)$ is the group of invertible $2$-morphisms $\id_T \Rightarrow \id_T$ up to $3$-iso\-morphism.
        \item $\pi_3(\mathfrak{C}, T)$ is the group of invertible $3$-morphisms $\id_{\id_T}\Rrightarrow\id_{\id_T}$.
        
    \end{enumerate}
\end{defi}

We will leave the definition of the actions of $\pi_1$ on $\pi_2$ and $\pi_3$ for Section \ref{section:actions}.

\subsection{Duality for Abelian \texorpdfstring{C*}{C*}-Algebras}
Here we recount the important categorical equivalences between abelian C*-algebras and topology.

\begin{thm}[Gelfand duality]
    There is an equivalence of categories
    $$\mathsf{CHaus}^{\text{op}} \to \mathsf{AbC^*Alg_1}$$
    from the opposite category of compact Hausdorff spaces and continuous maps to the category of unital, abelian C*-algebras with unital $\ast$-homomorphisms.
\end{thm}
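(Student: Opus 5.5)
We would prove this by writing down the functor explicitly and checking that it is essentially surjective and fully faithful. Define $C(-)\colon \mathsf{CHaus}^{\op}\to\AbCAlg_1$ by $T\mapsto C(T)$, with the supremum norm and pointwise conjugation as $\dag$. A continuous map $g\colon S\to T$ is sent to the pullback $g^*\colon C(T)\to C(S)$, $f\mapsto f\circ g$. This is a unital $\ast$-homomorphism, and the functoriality identities $(g\circ h)^*=h^*\circ g^*$ and $\id^*=\id$ are immediate. The proof then splits into three steps.

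\textbf{Step 1: points are characters.} For a compact Hausdorff $T$ we show that every character $\chi\colon C(T)\to\bC$ (a unital, automatically $\ast$-preserving, multiplicative functional) is evaluation $\mathrm{ev}_t$ at a unique $t\in T$. Suppose $\chi\neq \mathrm{ev}_t$ for every $t$. Then for each $t$ there is $f_t\in\ker\chi$ with $f_t(t)\neq 0$, and $f_t^\dag f_t=|f_t|^2\ge0$ also lies in $\ker\chi$. By compactness, finitely many of these give $g=\sum_i |f_{t_i}|^2\in\ker\chi$ with $g>0$ everywhere. Then $g$ is invertible, which contradicts $\chi(g)=0$. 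Uniqueness of $t$ follows from Urysohn's lemma, since continuous functions separate points. Moreover, $t\mapsto \mathrm{ev}_t$ is a homeomorphism from $T$ onto the character space with the weak-$*$ topology: it is a continuous bijection from a compact space to a Hausdorff one.

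\textbf{Step 2: full faithfulness.} Given a unital $\ast$-homomorphism $\varphi\colon C(T)\to C(S)$, define $g\colon S\to T$ by letting $g(s)$ be the point with $\mathrm{ev}_{g(s)}=\mathrm{ev}_s\circ\varphi$, using Step 1. This $g$ is continuous because $s\mapsto \mathrm{ev}_s\circ\varphi$ is weak-$*$ continuous, and Step 1 identifies the weak-$*$ topology with that of $T$. By construction $\varphi=g^*$, so the functor is full. It is faithful because if $g^*=h^*$ but $g(s)\neq h(s)$, an Urysohn function separating $g(s)$ from $h(s)$ gives a contradiction.

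\textbf{Step 3: essential surjectivity.} Given a unital abelian C*-algebra $A$, let $\hat A$ be its character space with the weak-$*$ topology; it is compact Hausdorff by Banach--Alaoglu, since characters have norm at most $1$. We show the Gelfand transform $\Gamma\colon A\to C(\hat A)$, $a\mapsto(\chi\mapsto\chi(a))$, is a $\ast$-isomorphism.
\begin{itemize}
\item Gelfand--Mazur gives a correspondence between characters and maximal ideals, so $\Gamma(a)(\hat A)=\sigma(a)$ and $\|\Gamma(a)\|_\infty$ equals the spectral radius of $a$.
\item For self-adjoint $a$, the C*-identity gives $\|a^{2^n}\|=\|a\|^{2^n}$, so the spectral radius formula yields $\|\Gamma(a)\|=\|a\|$. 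Applying this to $a^\dag a$ shows $\Gamma$ is isometric for all $a$.
\item $\Gamma$ preserves $\dag$: for self-adjoint $a$, the element $e^{ia}$ is unitary, so its spectrum lies in the unit circle and hence $\sigma(a)\subseteq\bR$.
\item The image of $\Gamma$ is a closed, unital, self-adjoint subalgebra that separates points, so Stone--Weierstrass shows $\Gamma$ is surjective.
\end{itemize}

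The main obstacle is Step 3, specifically showing that $\Gamma$ is isometric and $\ast$-preserving. This is where the C*-identity is genuinely used, through the spectral radius formula. Everything else is a compactness or Urysohn argument.
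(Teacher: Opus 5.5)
Your proof is correct. It is the standard textbook argument: points as characters via compactness and Urysohn, full faithfulness by pulling back characters, and essential surjectivity via Banach--Alaoglu, the spectral radius formula and Stone--Weierstrass.

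The paper gives no proof of this statement. It quotes Gelfand duality as classical background and only records that the equivalence is implemented by $T\mapsto C(T)$ and $\Phi\mapsto\Phi^*$ (precomposition), which is the same functor you use. So there is no competing route to compare against; your Steps 1--3 supply what the citation stands in for.

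Two small points to tidy up.

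\begin{itemize}
\item In Step 1, the existence of $f_t\in\ker\chi$ with $f_t(t)\neq 0$ deserves its one-line justification. Pick $h$ with $\chi(h)\neq h(t)$ and set $f_t=h-\chi(h)1$.
\item In Step 3 there is an ordering slip. Deducing $\|\Gamma(a)\|_\infty=\|a\|$ for general $a$ from the self-adjoint element $a^\dagger a$ uses $\Gamma(a^\dagger a)=|\Gamma(a)|^2$. That is, it uses that $\Gamma$ preserves $\dagger$, which you only prove in the following bullet. That bullet does not depend on isometry, so you can simply swap the two. Alternatively, note that every element of an abelian C*-algebra is normal, so $\|a^2\|=\|a\|^2$ for all $a$ and hence $r(a)=\|a\|$ directly.
\end{itemize}
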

The equivalence is witnessed by sending a compact Hausdorff space $T$ to the C*-algebra $C(T)$, and a continuous map $\Phi \colon T \to S$ is sent to the $\ast$-homomorphism $\Phi^*\colon C(S) \to C(T)$ given by precomposition with $\Phi$. Throughout this work, we will use $\Phi^*$ to denote the $\ast$-homomorphism given by this equivalence. Conversely, given a unital $\ast$-homomorphism $\phi\colon C(T) \to C(S)$, we will use $\hat\phi\colon S \to T$ to denote its preimage under this equivalence.

\par The Serre--Swan Theorem relates modules over abelian C*-algebras to vector bundles over the underlying topological space. We briefly recount the definition of a complex vector bundle.

\begin{defi}
    A \emph{complex vector bundle} over a compact Hausdorff space $T$ is a topological space $E$ (the total space) along with a continuous surjection $p\colon E \to T$ such that the fibers $p^{-1}(\{t\})$ are vector spaces for all $t \in T$. Furthermore, the vector bundle $E$ is asked to be locally trivial in the following sense: for each $t \in T$, there is an open set $U\subseteq T$ containing $t$ such that $p^{-1}(U)\cong U \times \bC^k$ (for some $k$). This isomorphism is required to be the identity on $U$ (by respecting $p$) and is a linear isomorphism when restricted to the fibers of $p^{-1}(U)$.
\end{defi}

Because we want to obtain Hilbert C*-modules from vector bundles instead of simply $C(T)$-modules, we need our vector bundles to be equipped with Hermitian metrics.

\begin{defi}
    A \emph{Hermitian metric} on $E$ is a continuous function $\langle \cdot | \cdot \rangle_E \colon \clo{E}\otimes E \to \bC$ that restricts to an inner product on each fiber $p^{-1}(\{t\}) \times p^{-1}(\{t\})$.
\end{defi}

A module is obtained from a vector bundle by considering its continuous sections.

\begin{defi}
    A (global) \emph{continuous section} on a vector bundle $E$ over $T$ is a continuous function $f \colon T \to E$ such that $p \circ f= \id_T$. The set of all continuous sections is denoted $\Gamma(E)$. Evidently, this is a module over $C(T)$ given by pointwise multiplication (using the vector space structure on each fiber).
\end{defi}

We now have all of the necessary background to state the Serre--Swan Theorem for abelian C*-algebras.

\begin{thm}[{\cite{S62}}]\label{thm:SerreSwan}
    There is an equivalence of C*-categories
    $$\mathsf{{Hilb}_{fd}}(T) \to\mathsf{Hilb_{fgp}}C(T)$$
    from the category of finite-rank topological hermitian vector bundles over $T$ to the category of finitely generated, projective Hilbert $C(T)$-modules. 
\end{thm}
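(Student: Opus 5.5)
The plan is to build the functor explicitly, check that it lands in the right category, and then prove full faithfulness and essential surjectivity; the only genuinely nontrivial input will be the existence of local frames plus a partition of unity on $T$.

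\textbf{The functor.} Send a hermitian bundle $(E,p,\langle\cdot|\cdot\rangle_E)$ to $\Gamma(E)$, with the right $C(T)$-action $(s\triangleleft f)(t)=f(t)s(t)$ and the $C(T)$-valued inner product $\langle s|r\rangle(t)=\langle s(t)|r(t)\rangle_E$.

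\textbf{$\Gamma(E)$ is a Hilbert module.} Continuity of the metric gives continuity of $\langle s|r\rangle$, and positivity and definiteness hold fibrewise. Completeness follows because the norm $\norm{\langle s|s\rangle}^{1/2}$ is the supremum over $T$ of the fibrewise norms. A uniform Cauchy sequence of sections therefore converges in each local trivialization, where the metric is equivalent to the standard one by compactness.

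\textbf{$\Gamma(E)$ is finitely generated projective.} Cover $T$ by finitely many trivializing opens $U_1,\dots,U_m$, using compactness. Choose a partition of unity $\{\rho_i\}$ subordinate to this cover, and let $e_{i,j}$ be the local frame sections over $U_i$. The sections $\rho_i^{1/2}e_{i,j}$ are then global and generate $\Gamma(E)$. They define an isometric embedding of $\Gamma(E)$ into $C(T)^N$, after Gram--Schmidt on the frames so that they are fibrewise orthonormal. This embedding is adjointable, so its range is complemented by a projection $P\in M_N(C(T))$.

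\textbf{Morphisms.} A bundle map $\varphi\colon E\to F$ induces $\Gamma(\varphi)$. Its adjoint is induced by the fibrewise adjoint $\varphi^\dagger$, which is continuous because it can be computed in local orthonormal frames. So $\Gamma$ is a $\dagger$-functor.

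\textbf{Fully faithful.} The key lemma is that evaluation $\Gamma(E)\to E_t$ is surjective and has kernel $\Gamma(E)\cdot I_t$, where $I_t$ is the maximal ideal at $t$. The kernel claim uses local frames: if $s(t)=0$, write $s=\sum_j f_j e_j$ locally with $f_j(t)=0$, and patch with the partition of unity. Consequently every $C(T)$-linear map $\Gamma(E)\to\Gamma(F)$ descends to a linear map on each fiber. These fiber maps assemble into a bundle map, which is continuous because in local frames it is given by continuous matrix coefficients $\langle e'_k|\varphi(e_j)\rangle$. Adjointability is automatic for modules of this type.

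\textbf{Essentially surjective.} Given a f.g.\ projective Hilbert module $X$, first realize $X\cong P\,C(T)^N$ for some projection $P$; this uses that f.g.\ projective Hilbert modules are unitarily complemented in a free module. Then set
\[
E=\{(t,v)\in T\times\bC^N : P(t)v=v\},
\]
with the metric restricted from $\bC^N$. Local triviality follows from continuity of $P$: near $t_0$, the map $v\mapsto P(t)v$ restricted to $P(t_0)\bC^N$ is an isomorphism onto $P(t)\bC^N$. Finally $\Gamma(E)\cong P\,C(T)^N$.

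\textbf{Main obstacle.} I expect the hard step to be full faithfulness, specifically the identification $\ker(\mathrm{ev}_t)=\Gamma(E)I_t$ together with continuity of the assembled bundle map. The same local-frame and partition-of-unity technique is what handles both, so I would settle it first and reuse it throughout.
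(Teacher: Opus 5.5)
Your proof is correct, but the paper does not actually prove this statement, so your route differs from it. The paper cites Swan for the purely algebraic equivalence between bundles and finitely generated projective $C(T)$-modules. In Appendix~\ref{Sec:Serre-Swan} it then simply asserts as ``well-known'' that $C(T)$-valued inner products on $\Gamma(E)$ are the same data as Hermitian metrics on $E$. The only other ingredients it adds are the $\dagger$-functoriality of $\Gamma$ (Lemma~\ref{lem:swandaggerfunctor}, which is your ``Morphisms'' step) and the unproved Lemma~\ref{lem:trivialsummand}.

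Your route is the standard self-contained proof carried out directly in the Hilbert setting, and it makes explicit what that citation leaves implicit:
\begin{itemize}
\item The Parseval frame $\{\rho_i^{1/2}e_{i,j}\}$ gives an isometry $\Gamma(E)\hookrightarrow C(T)^N$ with range $P\,C(T)^N$. This is a Hermitian version of Lemma~\ref{lem:trivialsummand}, with $E\oplus E^\perp\cong T\times\bC^N$ orthogonally. It is exactly the ``compatible'' choice of metrics that the paper invokes without comment in the proof of Lemma~\ref{lem:muisunitary}.
\item Your identity $\ker(\operatorname{ev}_t)=\Gamma(E)I_t$ is what forces any $C(T)$-valued inner product on $\Gamma(E)$ to be fibrewise.
\item Passing through a \emph{unitary} isomorphism $X\cong P\,C(T)^N$ yields a unitary, not merely algebraic, equivalence. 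It also guarantees that the induced fibre metric is definite.
\end{itemize}
Definiteness is not automatic. The form $\langle f|g\rangle(t)=t\,\overline{f(t)}g(t)$ is a definite inner product on the $C([0,1])$-module $C([0,1])$, yet its fibre form vanishes at $t=0$. It is ruled out only because this module is incomplete. So the one black box you use, unitary complementability of finitely generated projective Hilbert modules (closed range plus polar decomposition), is the same fact tacitly behind the paper's ``well-known'' remark.

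The citation buys brevity. Your argument buys a proof that takes place in the C*-categories named in the statement.
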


The equivalence is witnessed by sending a vector bundle $E$ to its space of continuous global sections $\Gamma(E)$. The $C(T)$-valued inner product is given by 
$$\langle f | g \rangle_{C(T)} (t)=\langle f(t) | g(t)\rangle_E$$ for the Hermitian metric $\langle \cdot | \cdot \rangle_E$. A map of vector bundles $\tau \colon E \to F$ is sent to the map $\Gamma(\tau)\colon \Gamma(E) \to \Gamma(F)$ defined by $\Gamma(\tau)(f)=\tau \circ f$ for all $f \in \Gamma(E)$.
\par It is known to experts that Swan's theorem is actually a monoidal equivalence of monoidal categories (with the usual tensor product of vector bundles and relative tensor product of modules). This fact will be required in subsection~\ref{subsec:pi2}. As we have been unable to find a reference, we provide a proof in Appendix \ref{Sec:Serre-Swan}.
\par We will also be interested in the case when $T$ has the homotopy type of a CW-complex, as isomorphism classes of line bundles are determined by a cocycle in $H^2(T;\bZ)$. We use $\operatorname{Pic}(T)$ to denote the group of isomorphism classes of (complex) line bundles over $T$.
\begin{prop}[{\cite[Proposition 3.10]{H03}}]
    When $T$ has the homotopy type of a CW-complex, the isomorphism class of a complex line bundle $E$ is determined by its first Chern class $c_1(E)$ in $H^2(T;\bZ)$. The map $E \mapsto c_1(E)$ is a group isomorphism from $\operatorname{Pic}(T) \to H^2(T;\bZ)$.
\end{prop}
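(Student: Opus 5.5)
The plan is to go through the classifying space for complex line bundles, since $T$ is compact Hausdorff and hence paracompact. First, I would recall that $\bC P^\infty$ carries the tautological line bundle $\gamma$. Pullback along maps then gives a bijection
$$[T,\bC P^\infty] \to \operatorname{Pic}(T), \qquad [f]\mapsto f^*(\gamma).$$
This is the standard classification theorem: paracompactness provides the partitions of unity needed to build a map $E \to \bC^\infty$ that is linear and injective on fibers. Homotopy invariance of pullbacks over paracompact bases then makes the map well defined and injective.

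Second, I would use that $\bC P^\infty$ is an Eilenberg--MacLane space $K(\bZ,2)$, with fundamental class $x\in H^2(\bC P^\infty;\bZ)$. When $T$ has the homotopy type of a CW-complex, representability of cohomology gives a bijection
$$[T,K(\bZ,2)]\to H^2(T;\bZ), \qquad [f]\mapsto f^*(x).$$
Composing the two bijections, $E\mapsto c_1(E):=f_E^*(x)$ is a bijection $\operatorname{Pic}(T)\to H^2(T;\bZ)$. This is exactly where the CW hypothesis is used, and it also settles that we may take singular and \v{C}ech cohomology to agree on $T$.

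Third, I would check that $c_1$ is a homomorphism. The tensor product $\gamma\boxtimes\gamma$ over $\bC P^\infty\times\bC P^\infty$ is classified by some map $m\colon \bC P^\infty\times \bC P^\infty\to\bC P^\infty$. For line bundles $E,F$ with classifying maps $f_E,f_F$, the bundle $E\otimes F$ is the pullback of $\gamma\boxtimes\gamma$ along the diagonal composed with $f_E\times f_F$. It therefore suffices to show $m^*(x)=x\otimes1+1\otimes x$. By the K\"unneth formula, $m^*(x)=a\,x\otimes 1+b\,1\otimes x$ for integers $a,b$. Restricting to $\bC P^\infty\times\{\mathrm{pt}\}$ gives $m^*\gamma|=\gamma\otimes\text{trivial}=\gamma$, so $a=1$, and symmetrically $b=1$. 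Naturality then yields $c_1(E\otimes F)=c_1(E)+c_1(F)$, and the trivial bundle maps to $0$ because its classifying map is constant.

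The main obstacle is the comparison between cohomology theories in the second step rather than any single computation. The paper works with \v{C}ech cohomology, while the $K(\bZ,2)$ representability is a statement about singular cohomology of CW-complexes. I would handle this by invoking the homotopy-invariance of both theories together with their agreement on CW-complexes.

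As a cross-check, and an alternative route, I would use the exponential sheaf sequence
$$0\to\underline{\bZ}\to\mathcal{C}_T\to\mathcal{C}_T^\times\to 0,$$
where $\mathcal{C}_T$ is the sheaf of continuous $\bC$-valued functions. Since $\mathcal{C}_T$ is fine on the paracompact space $T$, its higher cohomology vanishes. Hence $\check{H}^1(T;\mathcal{C}_T^\times)\cong\check{H}^2(T;\bZ)$ as groups, and the left side is $\operatorname{Pic}(T)$ via transition cocycles, with tensor product corresponding to multiplying cocycles. This gives the group isomorphism directly in \v{C}ech cohomology.
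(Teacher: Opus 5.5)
Your proposal is correct and follows essentially the same route as the source the paper cites. The paper gives no proof of its own, only the reference to Hatcher's Proposition 3.10, and your argument reproduces that proof: paracompactness of $T$ gives the classification by maps to $\bC P^\infty = K(\bZ,2)$, representability of singular $H^2$ on spaces of CW homotopy type gives bijectivity, and additivity comes from the classifying map of $\gamma\boxtimes\gamma$ together with K\"unneth and restriction to the axes.

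Your exponential-sequence cross-check is also valid and proves more. Once the connecting map is identified with $c_1$ (up to sign), it gives $\operatorname{Pic}(T)\cong\check{H}^2(T;\bZ)$ for every compact Hausdorff $T$. So the CW hypothesis is needed only when $H^2$ is read as singular cohomology.
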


\section{Computing the homotopy groups of \texorpdfstring{$\AbCAlg$}{AbC*Alg}}\label{section:homotopygroups}
In this section, we will compute the homotopy groups in $\AbCAlg$. Some of the arguments are easier to understand using $\ast$-isomorphisms rather than invertible bimodules. The following lemma justifies using appropriate $\ast$-isomorphisms in place of $2$-morphisms in $\AbCAlg$.
\begin{lem}\label{equivariantstariso}
    Let ${_\phi}A_\psi$ and ${_\mu}B_\nu$ be $1$-morphisms from $C(T) \to C(S)$. Suppose there is a $\ast$-isomorphism $\tau\colon A \to B$ such that $\tau \circ \phi = \mu$ and $\tau \circ \psi = \nu$. Then $\tau$ defines a $2$-isomorphism $A \Rightarrow B$ in $\AbCAlg$.
\end{lem}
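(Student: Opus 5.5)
The plan is to realize $\tau$ as a bimodule in the standard way: take ${}_\tau B$, or equivalently $X = B$ as a right Hilbert $B$-module with inner product $\langle x | y\rangle_B = x^\dagger y$ and left $A$-action $a \triangleright x = \tau(a)x$. First I would verify that $X$ is an $A$-$B$ correspondence, i.e.\ a 2-morphism in $\AbCAlg$. Since $B$ is unital, $X=B_B$ is a full right Hilbert $B$-module, because $1 = 1^\dagger 1 \in \langle X|X\rangle$. Left multiplication by $\tau(a)$ is adjointable with adjoint left multiplication by $\tau(a^\dagger)$. So $\phi_X\colon A \to \cL_B(X)$ is a $\ast$-homomorphism.

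Compatibility with the central inclusions is exactly where the hypotheses $\tau\circ\phi=\mu$ and $\tau\circ\psi=\nu$ enter. For $f\in C(T)$ we have $\phi(f)\triangleright x = \mu(f)x$. For $g \in C(S)$ we have $\psi(g)\triangleright x = \nu(g)x = x\,\nu(g)$, using that $\nu(g)$ is central in $B$. Hence the two actions of $C(T)$ agree (both are $\mu$ on $B$), and likewise for $C(S)$. This is the condition required of 2-morphisms between ${}_\phi A_\psi$ and ${}_\mu B_\nu$.

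Second, I would show that $X$ is invertible by exhibiting an inverse. Do the same construction with $\tau^{-1}$, giving $Y = A$ with left $B$-action through $\tau^{-1}$. Then give unitary bimodule isomorphisms $X\otimes_B Y \cong A$ and $Y \otimes_A X \cong B$. On elementary tensors, take $x\otimes y \mapsto \tau^{-1}(x)y$ and $y\otimes x\mapsto \tau(y)x$.

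Checking these maps is routine. Well-definedness on the balanced tensor product is the identity $\tau^{-1}(xb)y = \tau^{-1}(x)\tau^{-1}(b)y$. Inner products are preserved:
\[
\langle x\otimes y| x'\otimes y'\rangle = y^\dagger\tau^{-1}(x^\dagger x')y' = (\tau^{-1}(x)y)^\dagger \tau^{-1}(x')y'.
\]
The maps are isometric with dense range, since $x=1$ already gives everything. They are therefore unitaries, and thus adjointable bimodule 3-isomorphisms. Alternatively, one can observe directly that $X$ is an imprimitivity bimodule, with left inner product ${}_A\langle x,y\rangle = \tau^{-1}(xy^\dagger)$.

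The main obstacle is not analytic but bookkeeping. One must make sure that the relative tensor products in $\AbCAlg$ really are the max relative tensor products of correspondences, and that the unit 2-morphisms are $A$ and $B$ regarded as bimodules over themselves. Only then do the above unitaries witness invertibility in the 3-category, rather than merely Morita equivalence. I would cite the structure of $\AbCAlg$ from \cite{Ferrer24} for the identification of identity 2-morphisms, and note that Remark~\ref{moritainvertible} is consistent with this.
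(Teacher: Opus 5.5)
Your proposal is correct and follows the paper's proof. The paper uses the same bimodule $B$, with left $A$-action through $\tau$ and inner product $b_1^\dagger b_2$, and checks compatibility with the central maps exactly as you do. It gets invertibility by asserting that this is an imprimitivity bimodule with left inner product $\tau^{-1}(b_1b_2^\dagger)$, which is your ``alternatively'' remark. Your explicit inverse, $A$ with left $B$-action through $\tau^{-1}$, together with the unitaries $X\otimes_B Y\cong A$ and $Y\otimes_A X\cong B$, just spells out the step ``imprimitivity bimodule $\Rightarrow$ invertible $2$-morphism'' that the paper leaves implicit.
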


\begin{proof}
    Consider the bimodule ${_A}B_B$ with the usual $B$-action $(b_1 \triangleleft b_2= b_1b_2)$ and $B$-valued inner product $(\langle b_1 | b_2 \rangle = b_1^\dagger b_2)$, and with the left action by $A$
    $$a \triangleright b = \tau(a)b.$$
    It is routine to verify that ${_A}B_B$ is an $A$-$B$ imprimitivity bimodule, with $A$-valued inner product $$\langle b_1, b_2\rangle = \tau^{-1}(b_1b_2^\dagger).$$ To see that $_{A}B_B$ is a $2$-morphism in $\AbCAlg$ (and therefore a $2$-equivalence), observe that for $f \in C(T)$ and $b \in B$, we have
    $$\phi(f)\triangleright b = (\tau\circ \phi)(f)b= \mu(f)b=b \triangleleft \mu(f).$$
    We similarly see that the left and right actions by continuous functions on $S$ agree, and so ${_A}B_B$ is a $2$-equivalence in $\AbCAlg$.
\end{proof}

\begin{rmk}
    In general, we cannot replace a $2$-equivalence by a $\ast$-isomorphism. For example, $M_2(\bC)$ and $\bC$ are not isomorphic C*-algebras, but are Morita equivalent, and so produce equivalent $1$-morphisms in $\AbCAlg$ (assuming the central maps are chosen appropriately).
\end{rmk}
Our first major goal is the construction of $\pi_1(\AbCAlg, C(T))$. This can be broken into two parts: describing the structure of the central $\ast$-homomorphisms $\phi,\psi \colon C(T) \to Z(A)$ and the C*-algebraic structure of $A$. In the following subsection, we will work towards understanding the central $\ast$-homomorphisms. Along the way, we pick up a characterization of $\pi_0(\AbCAlg)$.
\subsection{Results for \texorpdfstring{$\pi_0$}{the zeroth} and \texorpdfstring{$\pi_1$}{the first homotopy groups}}\label{subsec:pi0}
Proving that the central $\ast$-homomorphisms are actually $\ast$-isomorphisms is proven using the faithfulness of the actions on imprimitivity bimodules.

\begin{lem}\label{centraliso}
If ${}_{\phi}A_{\psi}$ is a $1$-isomorphism $C(T)\to C(S)$, both $\phi \colon C(T) \to Z(A)$ and $\psi \colon C(S) \to Z(A)$ are $*$-isomorphisms. In particular, $T\cong S$.
\end{lem}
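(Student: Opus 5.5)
The plan is to use the inverse $1$-morphism together with the faithfulness of actions on imprimitivity bimodules (Remark~\ref{fullimpliesfaithful}). Since ${}_\phi A_\psi$ is a $1$-isomorphism, there is a $1$-morphism ${}_\mu B_\nu\colon C(S)\to C(T)$ with invertible $2$-morphisms
\[
X\colon A\otimes_S B \Rightarrow C(T), \qquad Y\colon B\otimes_T A \Rightarrow C(S).
\]
By Remark~\ref{moritainvertible}, $X$ and $Y$ are imprimitivity bimodules. Being $2$-morphisms, they are compatible with the central maps. For $X$ this means $(\phi(f)\otimes 1)\triangleright x = x\triangleleft f$ for $f\in C(T)$. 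For $Y$ it means $(1\otimes\psi(g))\triangleright y = y\triangleleft g$ for $g\in C(S)$, since the right $C(S)$-structure on $B\otimes_T A$ comes from $\psi$.

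\emph{Injectivity.} Suppose $\phi(f)=0$. Then $x\triangleleft f=0$ for all $x\in X$. Since $X$ is full over $C(T)$, Remark~\ref{fullimpliesfaithful} gives $f=0$. The same argument applied to $Y$ shows that $\psi$ is injective.

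\emph{Surjectivity.} This is the main obstacle. It has two steps.
\begin{enumerate}
\item First I show that every central element $w\in Z(A\otimes_S B)$ lies in the image of $f\mapsto \phi(f)\otimes 1$. The algebra $A\otimes_S B$ is Morita equivalent to $C(T)$, so by Theorem~\ref{moritaspectra} its spectrum is $T$, which is Hausdorff. Dauns--Hofmann (Theorem~\ref{DHthm}) together with Proposition~\ref{rieffelhomeoactions} then shows that $w$ acts on $X$ as right multiplication by some $g\in C(T)$. By the compatibility above, $\phi(g)\otimes 1$ acts on $X$ in the same way. Since $X$ is also full as a left module, the left action is faithful (the left-handed version of Remark~\ref{fullimpliesfaithful}), so $w=\phi(g)\otimes 1$.
\item Now take $z\in Z(A)$. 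The element $z\otimes 1$ is central in $A\otimes_S B$, so by the first step $(z-\phi(g))\otimes 1=0$ for some $g$. To transport this back to $A$, I tensor on the right by ${}_BB\otimes_T A$ and compose with $\id_A\boxtimes Y$. This gives an imprimitivity bimodule
\[
(A\otimes_S B)\otimes_T A \;\Rightarrow\; A\otimes_S C(S)\cong A
\]
(using associativity of the relative tensor product), on which the left action of $u\otimes 1\otimes 1$ corresponds to left multiplication by $u$ on $A$. Hence $z-\phi(g)$ acts as zero on the full module $A_A$, and faithfulness gives $z=\phi(g)$.
\end{enumerate}
So $\phi$ is surjective. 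The symmetric argument, using $Y$ and the algebra $B\otimes_T A$, shows that $\psi$ is surjective.

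Consequently $C(T)\cong Z(A)\cong C(S)$ as C*-algebras, and Gelfand duality gives $T\cong S$. The delicate points are the bookkeeping in the second step, namely checking that the composite bimodule really intertwines the left action of $z\otimes1\otimes1$ with left multiplication by $z$, and verifying that Proposition~\ref{rieffelhomeoactions} applies, which requires unitality and a Hausdorff spectrum for $A\otimes_S B$.
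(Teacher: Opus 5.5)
Your proof is correct and follows the paper's route. Injectivity comes from faithfulness of the right $C(T)$-action on $X$. Surjectivity onto $Z(A\otimes_S B)$ comes from Dauns--Hofmann, the Rieffel homeomorphism and faithfulness of the left action, followed by descent to $Z(A)$, with the symmetric argument for $\psi$.

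Your whiskering by $Y$ in step 2 supplies a justification the paper omits: the paper simply asserts $Z(A)\cong Z(A)\otimes_S 1_B$, i.e.\ that $z\mapsto z\otimes 1_B$ is injective. Your bookkeeping closes cleanly because $u=z-\phi(g)$ is central. Its left action $a\otimes y\mapsto ua\otimes y$ on $A\otimes_S Y$ therefore equals the right action of $u\otimes 1\in A\otimes_S C(S)\cong A$, which is faithful by fullness.
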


\begin{proof}
    We begin with the case where $T=S$ and we have a $1$-morphism ${_\eta}C_\kappa$ that is equivalent to ${}_{\id}C(T)_{\id}$ via a $2$-isomorphism ${_C}X_{C(T)}$. By Remark~\ref{moritainvertible}, ${_C}X_{C(T)}$ is an imprimitivity bimodule. Since ${_C}X_{C(T)}$ is full as a left Hilbert module, the left action of $C$ is faithful by Remark~\ref{fullimpliesfaithful}.
    \par We first show that $\eta$ is a $\ast$-isomorphism onto $Z(C)$. We begin by showing injectivity. Suppose $f\in C(T)$ satisfies $\eta(f)=0$. For $x \in X$, we have $x \triangleleft f = \eta(f) \triangleright x = 0$ since ${_C}X_{C(T)}$ is a $2$-morphism. As the right action of $C(T)$ is faithful, we conclude that $f=0$, so $\eta$ is injective. To show surjectivity, suppose $c \in Z(C)$. Then we may find a function $g \in C(\Prim(C))$ so that $\cF(g)=c$, where $\cF\colon C(\Prim(C)) \to Z(C)$ is the isomorphism given by the Dauns--Hofmann Theorem (Theorem~\ref{DHthm}). As ${_C}X_{C(T)}$ is an imprimitivity bimodule, it induces a Rieffel homeomorphism $h_X\colon T \to \Prim(C)$. We then have, for all $x \in X$,
    $$c\triangleright x = \cF(g)\triangleright x =x \triangleleft (g \circ h_X) = \eta(g \circ h_X) \triangleright x$$
    by Proposition~\ref{rieffelhomeoactions}, where we treated the Dauns--Hofmann isomorphism $\mathcal{F}$ as the identity on $C(T)$. Since the left $C$ action is faithful, we conclude that $c=\eta(g \circ h_X)$, and so $\eta$ is an isomorphism $C(T) \to Z(C)$. A similar argument shows that $\kappa\colon C(T) \to Z(C)$ is an isomorphism as well.
   \par For the general case, suppose ${_\mu}B_\nu$ is an inverse of ${_\phi}A_\psi$. Applying the above argument to the $1$-morphism ${_\eta}C_\kappa={_\phi}(A \otimes_S B)_\nu$ proves that $\phi\otimes 1_B$ is a $\ast$-isomorphism onto $Z(A\otimes_S B)$. However, 
   $$\phi(C(T))\otimes_S 1_B \subseteq Z(A)\otimes_S 1_B \subseteq Z(A \otimes_S B)$$
   from which we conclude that $\phi$ is a $\ast$-isomorphism $C(T) \to Z(A)$ as
   \[Z(A) \cong Z(A) \otimes_S C(S) \cong Z(A) \otimes_S 1_B \subseteq Z(A\otimes_S B)\]
   Similarly, applying the argument to ${_\mu}(B \otimes_T A)_\psi$ proves that $\psi\colon C(S) \to Z(A)$ is a $\ast$-isomorphism. We then see that $\psi^{-1}\circ \phi\colon C(T)\to C(S)$ is a $\ast$-isomorphism, from which we conclude that $T \cong S$ by Gelfand duality.
\end{proof}

The characterization of $\pi_0$ follows immediately from the preceding lemma.
\begin{thm} 
The collection of equivalence classes of objects in $\AbCAlg$ is given by
$$\pi_0(\AbCAlg) = \pi_0(\mathsf{CHaus}) = \{ \text{compact Hausdorff spaces}\}/\text{homeomorphism}.$$ 
\end{thm}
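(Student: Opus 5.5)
We prove the two inclusions of equivalence relations separately: objects $C(T)$ and $C(S)$ are equivalent in $\AbCAlg$ if and only if $T \cong S$. The map $\pi_0(\mathsf{CHaus}) \to \pi_0(\AbCAlg)$, $[T] \mapsto [C(T)]$, is surjective by definition, since every object of $\AbCAlg$ is of the form $C(T)$ with $T$ compact Hausdorff (Gelfand duality). So it suffices to show that this map is well defined and injective.

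\emph{Well-definedness.} Let $\Phi\colon S \to T$ be a homeomorphism. We construct an invertible $1$-morphism $C(T) \to C(S)$ by taking the C*-algebra $A = C(S)$ with central maps $\phi = \Phi^*\colon C(T) \to C(S) = Z(A)$ and $\psi = \id_{C(S)}$. For the inverse we use ${}_{\id}C(S)_{\Phi^*}$, viewed as a $1$-morphism $C(S) \to C(T)$.

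Next we compose these using Definition~\ref{tensorprodoverT}. The balancing ideal identifies $f \otimes g$ with $f g$, so $C(S) \otimes_S C(S) \cong C(S)$. This is a $\ast$-isomorphism carrying the two outer central maps to $\Phi^*$ and $\Phi^*$. Composing with $(\Phi^{-1})^* = (\Phi^*)^{-1}$ gives a $\ast$-isomorphism onto ${}_{\id}C(T)_{\id}$ that respects the central maps. By Lemma~\ref{equivariantstariso}, this yields a $2$-isomorphism to the identity $1$-morphism of $C(T)$. The composite in the other order is handled the same way.

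The only real checks here are that the max tensor product of commutative algebras modulo the balancing ideal is indeed $C(S)$, and that the central maps match up. I would verify the first using the universal property of the pushout in the definition of $\boxtimes$: the multiplication map $C(S)\otimes_{\max} C(S)\to C(S)$ kills $I_S$ and is split by $f\mapsto f\otimes 1$.

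\emph{Injectivity.} Suppose $C(T)$ and $C(S)$ are equivalent objects. Since the equivalence relation on objects is generated by invertible $1$-morphisms (up to higher morphisms), there is an invertible $1$-morphism ${}_\phi A_\psi\colon C(T)\to C(S)$. Lemma~\ref{centraliso} then gives $T \cong S$ directly.

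The one subtle point is checking that the equivalence relation ``induced by the $1$-, $2$-, and $3$-morphisms'' on objects is exactly ``there is a $1$-morphism that is invertible up to $2$-isomorphism''. This is the standard notion of equivalence in a tricategory, so Lemma~\ref{centraliso} applies as stated. With that, the map $[T]\mapsto[C(T)]$ is a bijection, proving the theorem.
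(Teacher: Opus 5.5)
Your proposal is correct and matches the paper's argument. The paper likewise gets the nontrivial direction directly from Lemma~\ref{centraliso}, and leaves implicit the easy converse that you spell out with ${}_{\Phi^*}C(S)_{\id}$ and Lemma~\ref{equivariantstariso}, which is the same kind of construction the paper uses later in Proposition~\ref{homeoarithmetic}. One small addition to your check that $C(S)\otimes_S C(S)\cong C(S)$: the splitting alone does not give injectivity of the induced multiplication map; you also need that $f\mapsto[f\otimes 1]$ is onto the quotient, which holds because $[f\otimes g]=[fg\otimes 1]$ and $\ast$-homomorphisms have closed range.
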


We now continue with our analysis of the central $\ast$-iso\-morphisms for elements $\pi_1(\AbCAlg, C(T))$. We will often use the following lemma about the center of $1$-morphisms. It also gives a suggestion of the semidirect product decomposition of $\pi_1(\AbCAlg, C(T))$.
\begin{prop}\label{prop:splitoffcenter} 
    Let $T$ be a compact Hausdorff space and ${_\phi}A_\psi$ a $1$-automorphism of $C(T)$. Then ${_\phi}A_\psi \cong {_\phi}Z(A)_\psi \otimes_T {_\psi}A_\psi \cong {_{\psi^{-1} \circ \phi}}C(T)_{\id} \otimes_T {_\psi}A_\psi$. When $A=C(T)$, we have $_\phi C(T)_\psi \cong {_{\psi^{-1} \circ \phi}}C(T)_{\id}$.
\end{prop}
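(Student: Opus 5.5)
By Lemma~\ref{centraliso}, both $\phi$ and $\psi$ are $\ast$-isomorphisms $C(T)\to Z(A)$. So $Z(A)$ is a unital abelian C*-algebra isomorphic to $C(T)$ via either map, and $\psi^{-1}\circ\phi$ is an automorphism of $C(T)$. The plan is to establish the two displayed isomorphisms separately, each as a $\ast$-isomorphism compatible with the central maps. Lemma~\ref{equivariantstariso} then upgrades each one to a $2$-isomorphism in $\AbCAlg$.

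For the first isomorphism, I will regard ${_\phi}Z(A)_\psi$ as a $1$-morphism $T\to T$, with $\phi,\psi\colon C(T)\to Z(A)$ as its central maps. I then consider the multiplication map
$$m\colon Z(A)\otimes_{\max} A\to A,\qquad z\otimes a\mapsto za.$$
This is a $\ast$-homomorphism, since $Z(A)$ and $A$ commute inside $A$. It kills the balancing ideal $I_T$, because the generators $z\psi(f)\otimes a - z\otimes\psi(f)a$ are sent to $0$. Hence $m$ descends to a map $\bar m\colon Z(A)\otimes_T A\to A$. The candidate inverse is $a\mapsto 1\otimes a$, and it is clearly a $\ast$-homomorphism.

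The key step is to show that these two maps are mutually inverse, which amounts to $z\otimes a = 1\otimes za$ in the quotient. For this I write $z=\psi(f)$ with $f=\psi^{-1}(z)$, using that $\psi$ is surjective onto $Z(A)$ by Lemma~\ref{centraliso}. Then the identity is exactly the balancing relation, and both compositions are the identity on generators. Since the algebraic tensor product is dense, continuity finishes the argument. For compatibility, the left central map of the tensor product is $\phi\otimes 1$, which $\bar m$ sends to $\phi$. The right central map is $1\otimes\psi$, which goes to $\psi$. So Lemma~\ref{equivariantstariso} applies.

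For the second isomorphism, I use the $\ast$-isomorphism $\psi^{-1}\colon Z(A)\to C(T)$. It carries $\phi$ to $\psi^{-1}\circ\phi$ and $\psi$ to $\id$. Therefore ${_\phi}Z(A)_\psi\cong{_{\psi^{-1}\circ\phi}}C(T)_{\id}$ by Lemma~\ref{equivariantstariso}. This $2$-isomorphism is then whiskered with ${_\psi}A_\psi$ using the functoriality of $\otimes_T$ on $2$-morphisms. Alternatively, and more concretely, one can check directly that $\psi^{-1}\otimes\id$ induces a $\ast$-isomorphism of relative tensor products respecting the central maps. The case $A=C(T)$ is the same computation with $Z(A)=A$, and it needs no tensor product at all: $\psi^{-1}$ itself gives ${_\phi}C(T)_\psi\cong{_{\psi^{-1}\circ\phi}}C(T)_{\id}$.

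The main obstacle I expect is the well-definedness and bijectivity of $\bar m$ at the level of the max tensor product modulo the closed ideal $I_T$. The delicate part is showing that $a\mapsto 1\otimes a$ is surjective onto the quotient. This rests on the density of elementary tensors together with the rewriting $z\otimes a\equiv 1\otimes za$, which is available only because $\psi$ is onto $Z(A)$. The rest is bookkeeping of central maps.
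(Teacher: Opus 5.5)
Your proposal is correct and follows essentially the same route as the paper. The paper also uses the $\ast$-isomorphism $a\mapsto 1\otimes_T a$, which it calls routine and which you justify via the multiplication map and the balancing relation using surjectivity of $\psi$. It then identifies ${_\phi}Z(A)_\psi$ with ${_{\psi^{-1}\circ\phi}}C(T)_{\id}$ via $\psi$, where you use $\psi^{-1}$, and upgrades each $\ast$-isomorphism to a $2$-isomorphism by Lemma~\ref{equivariantstariso}.
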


\begin{proof}
    It is routine to see that $_\phi A_\psi$ and $_\phi Z(A)_\psi \otimes_T {_\psi}A_\psi$ are isomorphic as C*-algebras via the map $a \mapsto 1\otimes_T a$. It is clear that this $\ast$-isomorphism preserves the central maps $\phi$ and $\psi$. Thus, they are equivalent $1$-morphisms in $\AbCAlg$ by Lemma~\ref{equivariantstariso}. Now, considering the $\ast$-isomorphism $\psi\colon {_{\psi^{-1} \circ \phi}}C(T)_{\id} \to {_\phi}Z(A)_\psi$, we see that this is another isomorphism of C*-algebras that respects the central maps $\psi$ (resp. $\phi$) and $\id$ (resp. $\psi^{-1} \circ \phi$). Once again, Lemma~\ref{equivariantstariso} proves they are equivalent $1$-morphisms in $\AbCAlg$. The special case when $A=C(T)$ immediately follows.
\end{proof}

When $A=C(T)$, we see that the central $\ast$-homomorphisms can be moved to one side of the $1$-morphism. This allows us to construct a map from ${\homeo}(T)$ to $\pi_1(\AbCAlg, C(T))$. The following proposition describes how these morphisms compose and proves they form a subgroup.

\begin{prop}[Arithmetic with Homeomorphisms]\label{homeoarithmetic} 
    Let $T$ be a compact Hausdorff space, and let $\phi$ and $\psi$ be automorphisms of $C(T)$. Then the $1$-automorphisms $_{\phi}C(T)_{\id} \otimes_T{_{\psi}}C(T)_{\id}$ and $_{\psi \phi}C(T)_{\id}$ are equivalent. Furthermore, $_{\phi}C(T)_{\id}$ and $_{\psi}C(T)_{\id}$ are equivalent if and only if $\phi=\psi$. Therefore, the map $\Pi\colon {\homeo}(T)\to \pi_1(\AbCAlg, C(T)))$ given by $\Phi \mapsto {_{\Phi^*}}C(T)_{\id}$ is an injective group homomorphism.
\end{prop}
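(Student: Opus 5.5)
We prove the three assertions of Proposition~\ref{homeoarithmetic} in order. Throughout, Lemma~\ref{equivariantstariso} turns $\ast$-isomorphisms that respect the central maps into $2$-isomorphisms. Imprimitivity bimodules together with Lemma~\ref{centraliso} and Remark~\ref{fullimpliesfaithful} supply the rigidity needed for injectivity.

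\emph{Composition.} We compute $_{\phi}C(T)_{\id} \otimes_T {_{\psi}}C(T)_{\id}$ directly from Definition~\ref{tensorprodoverT}. The balancing ideal identifies $f\cdot \id(g) \otimes h$ with $f \otimes \psi(g) h$. Since $C(T)\otimes_{\max} C(T)\cong C(T\times T)$, the multiplication map $\tau\colon f\otimes h \mapsto f\,\psi^{-1}(h)$, i.e.\ $\tau(f\otimes h)(t)=f(t)\,\psi^{-1}(h)(t)$, kills the balancing relations:
\[
\tau(fg\otimes h)=fg\,\psi^{-1}(h)=\tau(f\otimes \psi(g)h).
\]
The map $\tau$ is surjective, and it is injective on the quotient because every elementary tensor $f\otimes h$ is congruent to $f\psi^{-1}(h)\otimes 1$. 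So $\tau$ is a $\ast$-isomorphism $C(T)\otimes_T C(T)\to C(T)$.

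We then track the central maps. The left map $\phi\otimes 1$ goes to $\phi$, and the right map $1\otimes \id$ goes to $\psi^{-1}$. This yields $_{\phi}C(T)_{\psi^{-1}}$, which by the special case of Proposition~\ref{prop:splitoffcenter} is equivalent to $_{\psi\phi}C(T)_{\id}$. Care with the order of composition is required here; if the convention of $\Pi$ being a homomorphism demands it, the order $\phi\psi$ versus $\psi\phi$ should be matched to $(\Phi\Psi)^*=\Psi^*\Phi^*$. This matching is routine bookkeeping.

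\emph{Injectivity.} The ``if'' direction is trivial. For ``only if'', suppose $X$ is a $2$-isomorphism from $_{\phi}C(T)_{\id}$ to $_{\psi}C(T)_{\id}$. By Remark~\ref{moritainvertible}, $X$ is a $C(T)$-$C(T)$ imprimitivity bimodule, compatible with the central maps in the sense that
\[
\phi(f)\triangleright x = x\triangleleft \psi(f) \quad\text{and}\quad f\triangleright x = x \triangleleft f
\]
for all $f\in C(T)$, $x\in X$. Taking $f$ arbitrary in the second identity shows that left and right actions of $C(T)$ on $X$ agree. Substituting into the first identity gives $x\triangleleft \phi(f)=x\triangleleft\psi(f)$ for all $x$. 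Since $X$ is full as a right module, Remark~\ref{fullimpliesfaithful} shows the right action is faithful, so $\phi(f)=\psi(f)$ for every $f$.

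\emph{Homomorphism.} Combining the two parts, $\Pi$ is a well-defined map into $\pi_1$. Each $_{\Phi^*}C(T)_{\id}$ is invertible, with inverse $_{(\Phi^{-1})^*}C(T)_{\id}$ by the composition formula, since $_{\id}C(T)_{\id}$ is the identity $1$-morphism. The composition formula then makes $\Pi$ a homomorphism, and the second part makes it injective.

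The main obstacle is the first part. It requires a clean identification of the relative max tensor product $C(T)\otimes_T C(T)$ with $C(T)$, carrying the correct central maps, and a consistent choice of composition order. Everything else reduces to faithfulness of full modules.
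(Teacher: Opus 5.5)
Your proposal is correct and follows essentially the paper's proof: the same argument of a central-map-respecting $\ast$-isomorphism plus Lemma~\ref{equivariantstariso} for composition, the same faithfulness-of-full-modules argument for injectivity (you use the right action where the paper uses the left), and the same assembly into an injective homomorphism. The only cosmetic difference is that your $\tau$ is $\psi^{-1}$ composed with the paper's map $f\otimes_T g\mapsto\psi(f)g$, which lands directly on ${_{\psi\phi}}C(T)_{\id}$ and so avoids your detour through Proposition~\ref{prop:splitoffcenter}; the order bookkeeping you deferred does work out, since $\Psi^*\Phi^*=(\Phi\circ\Psi)^*$.
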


\begin{proof}
    Define a $\ast$-isomorphism $\rho\colon {_\phi}C(T)_{\id} \otimes_T {_\psi} C(T)_{\id} \to {_{\psi \circ \phi}}C(T)_{\id}$ given by
    $$f \otimes_T g \mapsto \psi(f)g.$$
    Note that this respects the central maps, in that $\rho \circ (\phi \otimes_T 1)= \psi\circ \phi$ and $\rho \circ (1 \otimes_T \id) = \id$. Thus these are equivalent $1$-morphisms in $\AbCAlg$ by Lemma~\ref{equivariantstariso}.
    \par Now, suppose ${_{\phi}}C(T)_{\id}$ and ${_{\psi}}C(T)_{\id}$ are equivalent; that is, there is a $C(T)$-$C(T)$ imprimitivity bimodule $X$ satisfying the properties
    $$\textbf{(a): } \phi(f)\triangleright x = x \triangleleft \psi(f) \text{ and }\textbf{(b): }\id(f)\triangleright x = x \triangleleft \id(f)$$
    for all $x \in X$ and $f \in C(T)$. Then, in particular,
    $$\phi(f)\triangleright x \overset{\textbf{(a)}}{=} x\triangleleft \psi(f)\overset{\textbf{(b)}}{=}\psi(f)\triangleright x. $$
    As this holds for all $x \in X$, and $X$ is full, it follows that $\phi(f)=\psi(f)$, from which we conclude that $\phi=\psi$.
    \par To show that $\Pi$ is a group homomorphism from $\text{Homeo(T)}$  to $\pi_1(\AbCAlg, C(T))$, let $\Phi$ and $\Psi$ belong to $\text{Homeo(T)}$. Then
    \begin{align*}
        \Pi(\Phi)\otimes_T\Pi(\Psi)&= {_{\Phi^*}}C(T)_{\id}\otimes_T{_{\Psi^*}}C(T)_{\id}\\
        &\cong {_{\Psi^*\circ \Phi^*}}C(T)_{\id}\\
        &={_{(\Phi\circ \Psi)^*}}C(T)_{\id}\\
        &= \Pi(\Phi\circ \Psi).
    \end{align*}
    We conclude that $\Pi$ is an injective group homomorphism.
\end{proof}

Having obtained a good handle on the central $\ast$-homomorphisms, we now construct a correspondence between spectra. Because we have maps from $C(T)$ to $Z(A)$ rather than from $T$ to $\hat{A}$, we need to formally understand the connection between $\hat{A}$ and $Z(A)$. The following result is well-known, but we will need an explicit description of the restriction map in our setting.
\begin{lem}\label{restrict}
    Let $\lambda \in \hat{A}$. Then the $\ast$-homomorphism $\restrict{\lambda}{Z(A)}\colon Z(A) \to \mathbb{C}$ belongs to $\widehat{Z(A})$, and the map $\operatorname{res}\colon \hat{A} \to \widehat{Z(A)}$ given by $\lambda \mapsto \restrict{\lambda}{Z(A)}$ is a continuous surjection. This map is injective if and only if $\hat{A}$ is Hausdorff.
\end{lem}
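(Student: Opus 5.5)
We prove the lemma in three parts: that restriction lands in $\widehat{Z(A)}$, that $\operatorname{res}$ is a continuous surjection (by comparing it with the maps already available), and finally the Hausdorff criterion.

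\emph{Restriction lands in $\widehat{Z(A)}$.} Let $\lambda$ be irreducible. By the remark following the definition of irreducibility, $\lambda(Z(A))\subseteq \bC 1_{\cH_\lambda}$, so $\restrict{\lambda}{Z(A)}$ is a $\ast$-homomorphism $Z(A)\to\bC$. It is nonzero because $\lambda(1_A)=1_{\cH_\lambda}$; here we use that $A$ is unital and $\lambda$ is nondegenerate. A nonzero character of the abelian algebra $Z(A)$ is exactly a point of $\widehat{Z(A)}$. Unitarily equivalent representations have the same restriction, so $\operatorname{res}$ is well defined on classes.

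\emph{Continuity and surjectivity.} We compare $\operatorname{res}$ with the composite $\hat{A}\xrightarrow{\ker}\Prim{A}\to\widehat{Z(A)}$ from the corollary to Theorem~\ref{DHthm}. Take $P=\ker\lambda$ and $z=\cF(f)\in Z(A)$. Theorem~\ref{DHthm} gives $q_P(za)=f(P)q_P(a)$. Since $\lambda$ factors through $A/P$, this yields $\lambda(z)=f(P)1$. So $\restrict{\lambda}{Z(A)}$ is the character $\cF(f)\mapsto f(P)$, i.e.\ evaluation at $P$ transported through $\cF$. This is precisely the Dauns--Hofmann map $\Prim{A}\to\widehat{Z(A)}$, identifying $\widehat{Z(A)}$ with the maximal ideal space of $C(\Prim{A})$. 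Both arrows in the corollary are continuous surjections, hence so is $\operatorname{res}$.

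For a self-contained argument, continuity can be checked directly. The topology on $\widehat{Z(A)}$ is the weak-$*$ topology, so it suffices that $\lambda\mapsto \lambda(z)$ is continuous on $\hat A$ for each $z\in Z(A)$. This is the continuity of $P\mapsto f(P)$ on $\Prim{A}$, pulled back along $\ker$. Surjectivity can also be seen directly: given a character $\chi$ of $Z(A)$, the ideal $A\ker\chi$ is proper. Otherwise a partition-of-unity argument in $Z(A)\cong C(\widehat{Z(A)})$ produces a contradiction; this is the step where one would otherwise cite the corollary. Any irreducible representation vanishing on $A\ker\chi$ then restricts to $\chi$.

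\emph{Hausdorff criterion.} This is the main obstacle.
\begin{itemize}
\item[($\Leftarrow$)] If $\operatorname{res}$ is injective, it is a continuous bijection from $\hat{A}$ into the Hausdorff space $\widehat{Z(A)}$. Distinct points therefore have disjoint open neighborhoods, obtained by pulling back separating neighborhoods. So $\hat{A}$ is Hausdorff.
\item[($\Rightarrow$)] Suppose $\hat{A}$ is Hausdorff. Then points are closed, so $\hat{A}\to\Prim{A}$ is injective: $\hat A$ is $T_0$, and classes with the same kernel have the same closure. By Proposition~\ref{prop:spectrumcompact}, $\Prim{A}$ is compact, and it is Hausdorff as a homeomorphic image of $\hat{A}$. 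The corollary identifies $\widehat{Z(A)}$ with the Hausdorffication $\beta\Prim{A}$ (\cite[Lemma~1.1]{N95}). A compact Hausdorff space is its own Hausdorffication, so $\Prim{A}\to\widehat{Z(A)}$ is a homeomorphism.
\end{itemize}
Composing the two injective maps shows $\operatorname{res}$ is injective.

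Alternatively, for ($\Rightarrow$) without citing the Hausdorffication: if $P\neq Q$ in compact Hausdorff $\Prim{A}$, Urysohn gives $f\in C(\Prim{A})$ with $f(P)\neq f(Q)$. Then $\cF(f)$ separates the two restrictions, by the computation in the second step.
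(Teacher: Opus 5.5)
Your proof is correct, and it differs from the paper's mainly in what it black-boxes. The paper routes everything through $\Prim{Z(A)}$:
\begin{itemize}
\item it cites \cite[Lemma~1.1]{N95} for $\restrict{\lambda}{Z(A)}\in\widehat{Z(A)}$ and for continuity and dense range of $P\mapsto P\cap Z(A)$;
\item it upgrades dense range to surjectivity using compactness of $\Prim{A}$;
\item in the Hausdorff case it cites \cite[Theorem~2.2]{N95} to get that $-\cap Z(A)$ is a homeomorphism.
\end{itemize}
You instead use Dauns--Hofmann to identify $\restrict{\lambda}{Z(A)}$ explicitly as $\cF(f)\mapsto f(\ker\lambda)$. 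After that, continuity is just continuity of $f$ on $\Prim{A}$, and injectivity in the Hausdorff case is Urysohn's lemma on the compact Hausdorff space $\Prim{A}$. Schur's lemma plus unitality gives the first claim, and properness of $A\ker\chi$ gives surjectivity. So your version needs only Theorem~\ref{DHthm} and elementary topology, at the cost of a bit more writing.

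Two small points. First, rely on these direct arguments rather than on the corollary to Theorem~\ref{DHthm}. That corollary is stated without proof and itself points back to this lemma, so leaning on it risks circularity.

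Second, your proof of ``injective $\Rightarrow$ Hausdorff'' only needs a continuous injection into a Hausdorff space, which is cleaner than the paper's. However, the paper later uses the stronger fact that $\operatorname{res}$ is a homeomorphism when $\hat{A}$ is Hausdorff (Lemma~\ref{t2spec}, Proposition~\ref{spectrumof1iso}). Your pieces give this at once: $\operatorname{res}$ is then a continuous bijection from the compact space $\hat{A}$ (Proposition~\ref{prop:spectrumcompact}) onto the Hausdorff space $\widehat{Z(A)}$.

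Also, your $(\Leftarrow)$ and $(\Rightarrow)$ labels are swapped relative to the wording of the statement.
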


\begin{proof}
    By Lemma~1.1 in \cite{N95}, $\restrict{\lambda}{Z(A)}$ is in $\widehat{Z(A)}$. Furthermore, the map $\ker(\lambda)\mapsto \ker(\restrict{\lambda}{Z(A)})$ is a continuous map from $\Prim{A}$ to $\Prim{Z(A)}$ with dense range. Since $A$ is unital, $\Prim{A}$ is compact by Proposition~\ref{prop:spectrumcompact}, so this map is actually onto. Thus, $\lambda \mapsto \ker(\restrict{\lambda}{Z(A)})$ is a continuous surjection $q$ of $\hat{A}$ onto $\Prim{Z(A)}$. As $Z(A)$ is abelian, $\ker\colon \widehat{Z(A)}\to \Prim{Z(A)}$ is a homeomorphism, so we have $\ker^{-1} \circ q\colon \hat{A} \to \widehat{Z(A)}$ is a continuous surjection and, in particular, $(\ker^{-1}\circ q)(\lambda)=\restrict{\lambda}{Z(A)}$. The argument is summarized in the below diagrams.

    \[
    \begin{tikzcd}[column sep=30pt]
    \widehat{A} \arrow[r,"\text{res}"] \arrow[dr,"q"description]\arrow[d,"\ker"'] & \widehat{Z(A)} \arrow[d,"\ker"] \\
    \Prim{A} \arrow[r,"- \cap Z(A)"'] & \Prim{Z(A)}
    \end{tikzcd}  
    \qquad
    \begin{tikzcd}[column sep=30pt]
    \lambda \arrow[r,mapsto] \arrow[dr,mapsto]\arrow[d,mapsto] & \lambda|_{Z(A)} \arrow[d,mapsto] \\
    \ker(\lambda) \arrow[r,mapsto] & \ker(\lambda|_{Z(A)})
    \end{tikzcd}  
    \]
    \par If the restriction map is injective, then it is a continuous bijection from a compact space to a Hausdorff space and is therefore a homeomorphism, from which it follows that $\hat{A}$ is Hausdorff. Conversely, if $\hat{A}$ is Hausdorff, then $\ker\colon \hat{A} \to \Prim{A}$ must be injective because the topology on $\hat{A}$ is pulled back from the topology on $\Prim{A}$. But, then $\ker\colon \hat{A} \to \Prim{A}$ is a homeomorphism, and so $\Prim{A}$ is Hausdorff. Thus, as $\Prim{A}$ is also compact, the map $-\cap Z(A)$ is a homeomorphism by Theorem~2.2 of \cite{N95}. It follows that $\text{res}$ is a homeomorphism and, in particular, is injective.
\end{proof}

A $1$-morphism composed with its inverse will be Morita equivalent to $C(T)$, so there is much that can be said about the structure of this composition. In particular, the spectrum of the relative tensor product ${A\otimes_T A^{-1}}$ will be homeomorphic to $T$. We want to use this fact to construct a homeomorphism of $\hat{A}$ and $T$. As a first step, the following remark relates the spectrum of $A$ to the spectrum of $A \otimes_T A^{-1}$.

\begin{rmk}\label{spectensorprod}
    Let ${_\phi}A_\psi$ and ${_\mu}B_\nu$ be two $1$-automorphisms of $C(T)$. Observe that the pairs $(\lambda,\rho)\subset \hat{A} \times \hat{B}$ satisfying $\restrict{\lambda}{Z(A)} \circ \psi = \restrict{\rho}{Z(B)} \circ \mu$ lie in the spectrum of $A\otimes_T B$. For, we have an inclusion of $\hat{A} \times \hat{B}$ into the spectrum of $A\otimes_{\text{max}}B$ by sending $(\lambda,\rho)\mapsto \lambda \otimes \rho$ (This map is a homeomorphism onto its range in the spectrum of $A\otimes_{\text{min}}B$ by \cite[Theorem B.45]{RW98}, and $A\otimes_{\text{max}}B$ surjects onto $A\otimes_{\text{min}}B$). We then see that the representations $\lambda \otimes \rho$ satisfying $\restrict{\lambda}{Z(A)} \circ \psi = \restrict{\rho}{Z(B)} \circ \mu$ are precisely the representations of this form that restrict to $0$ on the balancing ideal $I_T$, and so are in the spectrum of $A\otimes_T B$.
\end{rmk}

Having established the preceding relationship, we can now work towards constructing the desired homeomorphism $\hat{A} \to T$. As an intermediate step, we will prove that the spectrum is Hausdorff, which will subsequently allow us to apply Lemma \ref{restrict}.

\begin{lem}\label{t2spec}
        Suppose that $_\phi A_\psi$ is a $1$-automorphism. Then the spectrum $\hat{A}$ is Hausdorff.
    \end{lem}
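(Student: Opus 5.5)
Let ${_\mu}B_\nu$ be an inverse of ${_\phi}A_\psi$. The plan is to show that the restriction map $\operatorname{res}\colon \hat{A}\to\widehat{Z(A)}$ of Lemma~\ref{restrict} is injective; that lemma then says $\hat{A}$ is Hausdorff. By Lemma~\ref{centraliso}, $\phi$ and $\psi$ are $\ast$-isomorphisms onto $Z(A)$, and likewise $\mu,\nu$ are $\ast$-isomorphisms onto $Z(B)$. So $\widehat{Z(A)}$ is identified with $T$ via $\lambda\mapsto \hat\psi(\restrict{\lambda}{Z(A)})$, and similarly for $B$.

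Suppose, for contradiction, that $\lambda_1\neq\lambda_2$ in $\hat{A}$ have the same restriction to $Z(A)$. Pick any $\rho\in\hat{B}$ lying over the matching point: we need $\restrict{\rho}{Z(B)}\circ\mu=\restrict{\lambda_i}{Z(A)}\circ\psi$. Such a $\rho$ exists because $\operatorname{res}\colon\hat{B}\to\widehat{Z(B)}$ is surjective by Lemma~\ref{restrict} and $\mu$ is an isomorphism. By Remark~\ref{spectensorprod}, both $\lambda_1\otimes\rho$ and $\lambda_2\otimes\rho$ are irreducible representations of $A\otimes_T B$. They are inequivalent, since the embedding $\hat{A}\times\hat{B}\to \widehat{A\otimes_{\min}B}$ is injective (indeed a homeomorphism onto its range).

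Both of these representations restrict to the same character on the image of $\phi\otimes 1$ in $Z(A\otimes_T B)$: each gives $\restrict{\lambda_i}{Z(A)}\circ\phi$, and these agree by assumption. Since ${_\phi}(A\otimes_T B)_\nu$ is equivalent to the identity $1$-morphism of $C(T)$, the proof of Lemma~\ref{centraliso} shows that $\phi\otimes 1$ maps $C(T)$ isomorphically onto $Z(A\otimes_T B)$. Hence $\lambda_1\otimes\rho$ and $\lambda_2\otimes\rho$ have the same restriction to the whole center.

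On the other hand, $A\otimes_T B$ is Morita equivalent to $C(T)$, so by Theorem~\ref{moritaspectra} its spectrum is homeomorphic to $T$ and in particular Hausdorff. Lemma~\ref{restrict} then makes $\operatorname{res}$ injective on $\widehat{A\otimes_T B}$. Therefore $\lambda_1\otimes\rho\cong\lambda_2\otimes\rho$, a contradiction. So $\operatorname{res}$ is injective on $\hat{A}$, and $\hat{A}$ is Hausdorff.

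The main obstacle is the inequivalence of $\lambda_1\otimes\rho$ and $\lambda_2\otimes\rho$ as representations of $A\otimes_T B$. These are representations of $A\otimes_{\max}B$ that factor through the quotient by $I_T$, and unitary equivalence does not depend on which algebra we view them over. So the issue reduces to $A\otimes_{\max}B$. There I would use that $\lambda\otimes\rho$ factors through $A\otimes_{\min}B$ and invoke \cite[Theorem B.45]{RW98}. If that is delicate, a direct alternative is available: restricting $\lambda\otimes\rho$ to $A\otimes 1$ gives a multiple of $\lambda$, so an intertwiner of the tensor products yields an equivalence of multiples of $\lambda_1$ and $\lambda_2$. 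For irreducible representations this forces $\lambda_1\cong\lambda_2$.
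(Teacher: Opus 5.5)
Your proof is correct and follows essentially the same route as the paper. Both arguments show that $\operatorname{res}\colon\hat{A}\to\widehat{Z(A)}$ is injective by tensoring $\lambda_1,\lambda_2$ with a compatible $\rho\in\hat{B}$ and using that $\widehat{A\otimes_T B}\cong T$ is Hausdorff. The differences are minor: you argue by contradiction using injectivity of $(\lambda,\rho)\mapsto\lambda\otimes\rho$, where the paper cites \cite[Lemma B.47]{RW98}, and you explicitly justify via the proof of Lemma~\ref{centraliso} that $\phi\otimes 1$ exhausts $Z(A\otimes_T B)$, a step the paper leaves implicit.
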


    \begin{proof}
        By Lemma~\ref{restrict}, it suffices to show that the restriction map $[\lambda]\mapsto [\restrict{\lambda}{Z(A)}]$ is injective. Let ${_\mu}B_\nu$ be $({_\phi}A_\psi)^{-1}$, so that ${_\phi}(A \otimes _T B)_\nu$ is Morita equivalent to ${_{\id}}C(T)_{\id}$. To show that it is injective, we consider two representations $\lambda_1$ and $\lambda_2$ in $\hat{A}$ and assume $\operatorname{res}(\lambda_1)=\operatorname{res}(\lambda_2)$; that is, $\restrict{\lambda_1}{Z(A)}$ and $\restrict{\lambda_2}{Z(A)}$ are unitarily equivalent. As $Z(A)$ is an abelian C*-algebra, this happens if and only if $\restrict{\lambda_1}{Z(A)}=\restrict{\lambda_2}{Z(A)}$ as maps into $\bC$. As $Z(B)\cong C(T)$ by Lemma~\ref{centraliso}, we may choose $\rho\in \hat{B}$ so that $\restrict{\rho}{Z(B)}=\restrict{\lambda_1}{Z(A)} \circ (\psi \circ \mu^{-1}) \in \widehat{Z(B)}$ by Lemma~\ref{restrict}. We then see that $\lambda_i \otimes \rho$ is an element of $\widehat{A\otimes_T B}$ for $i=1,2$ by Remark~\ref{spectensorprod}. Furthermore, we claim $\lambda_1 \otimes_T \rho$ and $\lambda_2 \otimes_T \rho$ are unitarily equivalent irreducible representations. For, since ${_\phi}(A\otimes_T B)_\nu$ is Morita equivalent to $C(T)$, the spectrum of ${_\phi}(A\otimes_T B)_\nu$ is homeomorphic to $T$, and, in particular, is Hausdorff by Theorem~\ref{moritaspectra}. Then, by Lemma~\ref{restrict}, the restriction map is a homeomorphism. Since the restrictions of $\lambda_i\otimes_T \rho$ are equal, they are unitarily equivalent. However, we then have $\lambda_1 \otimes_T \rho(1_B)$ and $\lambda_2 \otimes_T \rho(1_B)$ are unitarily equivalent, and it follows that $\lambda_1$ and $\lambda_2$ are unitarily equivalent by \cite[Lemma B.47]{RW98}. We conclude that $[\lambda_1]=[\lambda_2]$ in $\hat{A}$, so that the restriction map is injective.
    \end{proof}

We now construct homeomorphisms $\hat{A}\to T$ from the central $\ast$-isomorphisms.

\begin{prop}\label{spectrumof1iso}
    If $_\phi A _\psi$ is a 1-automorphism of $C(T)$, then $\hat{A} \cong T$. In particular, $\hat{\phi}, \hat{\psi}\colon \hat{A}\to T$ given by $\hat{\phi}(\lambda)=\restrict{\lambda}{Z(A)} \circ \phi$ and $\hat{\psi}(\lambda)=\restrict{\lambda}{Z(A)} \circ \psi$ are homeomorphisms. 
\end{prop}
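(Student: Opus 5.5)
The plan is to factor $\hat{\phi}$ and $\hat{\psi}$ through the restriction map of Lemma~\ref{restrict} and the Gelfand dual of a central isomorphism. Write $\operatorname{res}\colon \hat{A}\to \widehat{Z(A)}$ for $\lambda\mapsto \restrict{\lambda}{Z(A)}$. By Lemma~\ref{t2spec}, $\hat{A}$ is Hausdorff, so Lemma~\ref{restrict} says $\operatorname{res}$ is a continuous bijection. Since $A$ is unital, $\hat{A}$ is compact by Proposition~\ref{prop:spectrumcompact}. The space $\widehat{Z(A)}$ is Hausdorff, being the spectrum of an abelian C*-algebra. A continuous bijection from a compact space to a Hausdorff space is a homeomorphism, so $\operatorname{res}$ is a homeomorphism $\hat{A}\cong\widehat{Z(A)}$.

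Next, by Lemma~\ref{centraliso}, $\phi\colon C(T)\to Z(A)$ and $\psi\colon C(T)\to Z(A)$ are $\ast$-isomorphisms. Gelfand duality then gives homeomorphisms $\widehat{Z(A)}\to \widehat{C(T)}\cong T$ defined by $\chi\mapsto \chi\circ\phi$ and $\chi\mapsto\chi\circ\psi$. Here we identify $T$ with the character space of $C(T)$ via point evaluations, and the maps are continuous for the weak-$*$ topologies, with inverses given by precomposition with $\phi^{-1}$ and $\psi^{-1}$. Composing with $\operatorname{res}$ gives exactly $\hat{\phi}(\lambda)=\restrict{\lambda}{Z(A)}\circ\phi$ and $\hat{\psi}(\lambda)=\restrict{\lambda}{Z(A)}\circ\psi$, each a composition of two homeomorphisms. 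This proves both $\hat{A}\cong T$ and the claim about $\hat{\phi}$ and $\hat{\psi}$.

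The one step that needs a little care is matching topologies. The topology on $\widehat{Z(A)}$ used in Lemma~\ref{restrict} is the hull-kernel topology pulled back from $\Prim{Z(A)}$, while Gelfand duality uses the weak-$*$ topology on characters. For a commutative unital C*-algebra these coincide, both being the compact Hausdorff topology of its maximal ideal space. Once that identification is noted, everything else is assembled directly from Lemmas~\ref{restrict}, \ref{centraliso}, and \ref{t2spec}, which carry the substantive content.
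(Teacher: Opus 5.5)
Your proposal is correct and follows essentially the same route as the paper. You get Hausdorffness from Lemma~\ref{t2spec}, identify $\hat{A}$ with $\widehat{Z(A)}$ via $\lambda\mapsto\restrict{\lambda}{Z(A)}$ using Lemma~\ref{restrict}, and compose with the Gelfand duals of the central $\ast$-isomorphisms from Lemma~\ref{centraliso}. Your explicit compact-to-Hausdorff upgrade and your remark that the hull-kernel and weak-$*$ topologies on $\widehat{Z(A)}$ coincide just spell out steps the paper leaves implicit.
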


\begin{proof}
By Lemma \ref{t2spec}, $\hat{A}$ is Hausdorff, and hence $\hat{A} \cong \widehat{Z(A)}$ by Lemma \ref{restrict}. On the other hand, $Z(A) \cong C(T)$ by Lemma \ref{centraliso}, so we conclude $\hat{A} \cong \widehat{Z(A)} \cong T$. To describe these isomorphisms more explicitly, recall that the map from $\hat{A} \to \widehat{Z(A)}$ is given by sending $\lambda \mapsto \restrict{\lambda}{Z(A)}$. Furthermore, our identification of $Z(A)$ with $C(T)$ is given by $\phi$, and so the identification $\widehat{Z(A)} \to \widehat{C(T)}= T$ is given by $\rho \mapsto \rho \circ \phi$. Composing these maps yields a map $\lambda \mapsto \restrict{\lambda}{Z(A)} \circ \phi$, which is the map $\hat\phi$. By a similar argument, $\hat\psi$ is a homeomorphism as well.
\end{proof}

Having proved each morphism $_{\phi}A_\psi$ in $\pi_1(\AbCAlg, C(T))$ has spectrum $\hat{A}\cong T$, we can now show that the C*-algebra $A$ has continuous-trace.

\begin{thm}\label{1morctstrace}
	    If $_\phi A_\psi$ is a $1$-automorphism of $C(T)$, then $A$ is a continuous-trace algebra over $T$. 
	\end{thm}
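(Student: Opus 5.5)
By Remark~\ref{unitalctstrace}, and since $\hat{A}\cong T$ is Hausdorff by Proposition~\ref{spectrumof1iso}, it suffices to show that $\dim_A\colon \hat{A}\to \bN\cup\{\infty\}$ is finite and continuous on $\hat A$. Equivalently, $\dim_A$ should be locally constant. Lower semicontinuity is free from Proposition~\ref{lowersemicts}. The work is therefore twofold: show every irreducible representation of $A$ is finite-dimensional, and obtain upper semicontinuity (local boundedness from above together with closedness of the level sets).

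The plan is to exploit the inverse ${_\mu}B_\nu$ of ${_\phi}A_\psi$. The algebra $C\coloneqq A\otimes_T B$ is Morita equivalent over $T$ to $C(T)$ via an imprimitivity bimodule ${_C}X_{C(T)}$ (Remark~\ref{moritainvertible}). For $\lambda\in\hat A$, choose $\rho\in\hat B$ with matching central character, as in the proof of Lemma~\ref{t2spec}. Remark~\ref{spectensorprod} then gives $\lambda\otimes\rho\in\hat C$.

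Next, I would use the identification of $\hat C$ with $T$ through the Rieffel correspondence. The irreducible representation of $C$ over a point $t$ is induced from the character $\mathrm{ev}_t$ of $C(T)$, namely $X\otimes_{C(T)}\bC_t$. Because $X$ is a full right Hilbert $C(T)$-module which, as a left $C$-module, has unital action ($C$ is unital), $X$ is finitely generated projective over $C(T)$. Indeed, $1_C$ is a finite sum ${_C}\langle x_i,y_i\rangle$ up to an invertible correction, giving a finite frame. So $X\cong\Gamma(E)$ for a Hermitian vector bundle $E$ by Theorem~\ref{thm:SerreSwan}, and $\dim(\lambda\otimes\rho)=\operatorname{rank}E_t$. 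This rank is finite and locally constant in $t$.

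Finally, $\dim(\lambda\otimes\rho)=\dim\lambda\cdot\dim\rho$. Since $\dim\rho\ge1$, this gives $\dim\lambda<\infty$ everywhere. For continuity, fix $\lambda_0$ with central character $t_0$. Finiteness follows from the above; I expect the main obstacle to be the step after it. We need a continuous choice of $\rho$ along with $\lambda$, so that the product formula transfers local constancy. Here Proposition~\ref{spectrumof1iso} helps: $\hat B\cong T$ via $\hat\mu$, so $\rho$ is uniquely and continuously determined by $\lambda$, namely $\rho=\hat\mu^{-1}\circ\hat\psi(\lambda)$. Thus $t\mapsto(\dim_A\circ\hat\psi^{-1})(t)\cdot(\dim_B\circ\hat\mu^{-1})(t)$ is locally constant. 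Both factors are lower semicontinuous and $\bN_{\ge1}$-valued, so each is locally bounded. Near $t_0$, the factor $\dim_A$ cannot jump up, since the product is constant and $\dim_B$ cannot drop, by lower semicontinuity. It also cannot drop, again by lower semicontinuity. Hence $\dim_A$ is locally constant, $1_A$ is a continuous-trace element, and $A$ is continuous-trace over $T$, with the $T$-structure given via $\hat\psi$ (or $\hat\phi$).
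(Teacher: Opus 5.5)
Your proposal is correct and follows essentially the same route as the paper. You pair each $\lambda\in\hat A$ with $\gamma(\lambda)=\hat\mu^{-1}\circ\hat\psi(\lambda)$, use that $\dim_{A\otimes_T B}=\dim_A\cdot\dim_B$ is finite and continuous because $A\otimes_T B$ is Morita equivalent to $C(T)$, and then let lower semicontinuity of $\dim_A$ and $\dim_B$ force $\dim_A$ to be locally constant. The differences are cosmetic: you get local constancy of $\dim_{A\otimes_T B}$ directly (finite frame, Serre--Swan, induction from $\mathrm{ev}_t$) where the paper cites Morita invariance of continuous trace, and you phrase the last step as an integer-valued squeeze rather than via upper semicontinuity of $\dim_{A\otimes_T B}\cdot\dim_B^{-1}$.
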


    \begin{proof}
        By Proposition~\ref{spectrumof1iso}, we know that $\hat{A}\cong T$. Then, if ${_\mu}B_\nu$ is an inverse for ${_\phi}A_\psi$, we know that, in particular, $A\otimes_TB$ is Morita equivalent to $C(T)$. Thus $A\otimes_TB$ is a continuous-trace C*-algebra by Proposition~5.15 of \cite{RW98}. Now, define the set
        $$\Delta\coloneq\{\lambda \otimes \rho: (\lambda,\rho)\in \hat{A}\times\hat{B}, \lambda \circ \psi = \rho \circ \mu\}.$$
        Then the map $\Tr_{1_A \otimes 1_B}$ on $\widehat{A\otimes_TB}$ restricts to a continuous function on $\Delta$. Define $\gamma\colon \hat{A}\to\hat{B}$ by $\gamma=\hat{\mu}^{-1}\circ \hat{\psi}$. Observe that $\lambda \mapsto (\lambda \otimes \gamma(\lambda))$ is a homeomorphism $\hat{A}\cong \Delta$. As $A \otimes_TB$ is unital, we know this means that $1_A \otimes_T 1_B$ is a continuous-trace element by Remark \ref{unitalctstrace}. It follows that $\dim_{A \otimes _T B}$ is continuous on $\widehat{A \otimes_T B}$ and, in particular, takes finite values in $\bN$. Importantly, since $\dim_{A\otimes_T B}=\dim_A \cdot \dim_B$, we conclude that both $\dim_A$ and $\dim_B$ take on finite values. Thus,
        $$\dim_A(\lambda)=\frac{\dim_A(\lambda)\dim_B(\gamma(\lambda))}{\dim_B(\gamma(\lambda))}=\underbrace{\dim_{A\otimes_T B}(\lambda \otimes_T \gamma(\lambda))}_{\text{continuous}}\!\underbrace{\dim_B(\gamma(\lambda))^{-1}}_{\text{upper semicontinuous}}$$
        where upper semicontinuity of $[\lambda] \mapsto\dim_B(\gamma(\lambda))^{-1}$ comes from the fact that $\dim_B$ is lower semicontinuous and nonzero by Theorem~\ref{lowersemicts}. Therefore, $\dim_A$ is the product of two positive upper semicontinuous functions and is therefore upper semicontinuous. However, it is always lower semicontinuous (again by Theorem~\ref{lowersemicts}), so we conclude that $\dim_A$ is continuous. This means that $1_A$ is a continuous-trace element of $A$, and so $A$ has continuous-trace by Remark~\ref{unitalctstrace}.
    \end{proof}

We now wish to build a group homomorphism from $H^3_{\tor}(T;\bZ)$ to $\pi_1(\AbCAlg, T)$. Some care must be taken in constructing the map because our $1$-morphisms are constructed using unital C*-algebras.

\begin{lem}\label{dixmierdouady1mors}
    If ${_\phi}A_\phi$ is an invertible $1$-morphism of $C(T)$, then its Dixmier--Douady class $\delta(A)$ lies in $H^3_{\tor}(T;\bZ)$. Conversely, given any $\delta \in H^3_{\tor}(T;\bZ)$, there is a $1$-morphism ${_\psi}B_\psi$ of $C(T)$ with $\delta(B)=\delta$.
\end{lem}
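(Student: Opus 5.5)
By Theorem~\ref{1morctstrace}, $A$ is a unital continuous-trace C*-algebra whose spectrum is identified with $T$ via $\hat\phi$ (Proposition~\ref{spectrumof1iso}). By Remark~\ref{unitalctstrace}, $\dim_A$ is finite and locally constant. Since $T$ is compact, $T$ splits into finitely many clopen pieces $T_1,\dots,T_k$, and on each $T_i$ the algebra $A_i = A\,\phi(\chi_{T_i})$ is homogeneous of some degree $n_i<\infty$.

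For the first claim, the plan is to use the classical structure theory of homogeneous algebras. An $n$-homogeneous unital algebra with spectrum $T_i$ is the section algebra of a locally trivial bundle of $M_{n_i}(\bC)$'s, by Fell and Tomiyama--Takesaki (cf.\ \cite{F61}). Such a bundle is classified by a principal $PU(n_i)$-bundle. Its Dixmier--Douady class is the image of that class under the coboundary $H^1(T_i;PU(n_i))\to H^2(T_i;\underline{\mathbb{T}})\cong H^3(T_i;\bZ)$ coming from $1\to\mathbb{T}\to U(n_i)\to PU(n_i)\to 1$. Since this extension factors through $1\to\bZ/n_i\to SU(n_i)\to PU(n_i)\to1$, the class is $n_i$-torsion. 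Concretely, the tensor power $A_i^{\otimes n_i}$ over $T_i$ has a trivial DD class, or one can observe this via the determinant. Hence $\delta(A)=\sum_i\delta(A_i)$ is killed by $\operatorname{lcm}(n_i)$ and lies in $H^3_{\tor}(T;\bZ)$.

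For the converse, I will invoke Serre's/Grothendieck's theorem for compact spaces (as in \cite{RW98} or the Blackadar--Kumjian treatment). It states that every torsion class $\delta\in H^3(T;\bZ)$ is the DD class of some unital homogeneous algebra $B=\Gamma(\mathcal{B})$ with fibres $M_n(\bC)$, that is, the endomorphism bundle of a projective bundle. This is the main obstacle: for general compact Hausdorff $T$, rather than a finite CW complex, one must check that the "torsion in $\check{H}^3$ equals image of $PU(n)$-bundles" argument still works. I would reduce to this by writing $T$ as an inverse limit of finite polyhedra and using continuity of \v{C}ech cohomology together with the fact that $PU(n)$-bundles over $T$ are pulled back from some stage. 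Alternatively, I would cite the known result directly. Given $B$, let $\psi\colon C(T)\to Z(B)$ be the Dauns--Hofmann isomorphism (Theorem~\ref{DHthm}), composed with the identification $\Prim B\cong T$.

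It remains to show that ${_\psi}B_\psi$ is invertible in $\AbCAlg$. The candidate inverse is ${_\psi}B^{\op}_\psi$. The plan is to show $B\otimes_T B^{\op}\cong\Gamma(\operatorname{End}(\mathcal{B}))$, which has trivial DD class; this uses Theorem~\ref{brauergroupiso} and the formula $[A]^{-1}=[A^{\op}]$. Being unital with trivial class, it is Morita equivalent over $T$ to $C(T)$ by the Dixmier--Douady classification. An imprimitivity bimodule over $T$ is compatible with both central maps, since both are $\psi$. So it gives a $2$-isomorphism ${_\psi}(B\otimes_TB^{\op})_\psi\Rightarrow{_{\id}}C(T)_{\id}$, after using Proposition~\ref{prop:splitoffcenter} to normalize the central maps. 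The same argument applies on the other side, so ${_\psi}B_\psi$ is a $1$-automorphism with $\delta(B)=\delta$.
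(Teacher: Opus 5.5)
Your proposal is correct and follows essentially the same route as the paper. You split $T$ into pieces on which $A$ is homogeneous and use that homogeneous algebras have torsion Dixmier--Douady class; the paper simply cites Blackadar, Theorem IV.1.7.23, where you sketch the $SU(n)\to PU(n)$ argument. For the converse you use the Grothendieck--Serre realization of torsion classes by unital homogeneous algebras together with Dauns--Hofmann, and your explicit invertibility check via $B^{\op}$ matches what the paper does later in Theorem~\ref{splitexactseq}.

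Your decomposition into finitely many clopen level sets of the locally constant function $\dim_A$ is in fact the right way to do the first step. The paper's claim that a compact Hausdorff space has finitely many connected components fails in general (e.g.\ for the Cantor set).
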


\begin{proof}
We begin by showing that the Dixmier--Douady invariant is a torsion element of $H^3(T;\bZ)$. Given a compact Hausdorff space $T$, we may write $T$ as a disjoint union $T=\sqcup_{i=1}^nT_i$ for some connected components $T_i$ (finitely many because $T$ is compact). Thus, since the C*-algebra $A$ in ${_\phi}A_\phi$ is a unital continuous-trace algebra over $T$ by Theorem~\ref{1morctstrace}, we may write $A$ as the direct sum
$$A=\bigoplus_{i=1}^n\phi(\chi_{T_i})A.$$
Observe that $A_i\coloneq \phi(\chi_{T_i})A$ is a continuous-trace C*-algebra over $T_i$. Because $T_i$ is connected, the function $\dim_{A_i}$ must be constant, and so $A_i$ is a homogeneous C*-algebra. Thus, by Theorem IV.1.7.23 of \cite{B06}, the Dixmier--Douady class $\delta(A_i)$ must be a torsion element of $H^3(T_i;\bZ)$. Since cohomology respects direct sums, $\delta(A)=\sum_{i=1}^n\delta(A_i)$ must be a torsion element in $H^3(T;\bZ)$ as well.
\par For the reverse direction, given a torsion element $\delta \in H^3(T;\bZ)$, by \cite[Corollaries 1.5 \& 1.7]{G95} (c.f. \cite[Theorem~IV.1.7.24]{B06}), there is a homogeneous C*-algebra $B$ whose spectrum is identified with $T$ and whose Dixmer--Douady class is $\delta(B)=\delta$. This identification of $T$ with $\hat{B}$ corresponds to a $\ast$-isomorphism $\psi \colon C(T) \to Z(B)$ by the Dauns--Hofmann Theorem (Theorem~\ref{DHthm}). Furthermore, $B$ is a unital C*-algebra by Theorem 3.2 of \cite{F61}. Therefore, ${_\psi}B_\psi$ defines an invertible $1$-morphism $C(T) \to C(T)$ with $\delta(B)=\delta$.
\end{proof}

Because the Dixmier--Douady classification assumes a single identification $\hat{A} \cong T$ and our $1$-morphisms have two such identifications, we need to ensure that the morphisms with identical central maps genuinely form a subgroup of $\pi_1(\AbCAlg, C(T))$. The first part of the proof will also be important for proving the exactness of the split exact sequence we construct in Theorem~\ref{splitexactseq}.

\begin{lem}\label{singlecentralmap}
    The set of equivalence classes of $1$-morphisms of the form ${_\phi}A_\phi$ is a subgroup of $\pi_1(\AbCAlg,T)$.
\end{lem}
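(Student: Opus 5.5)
We need to show that the set of classes $[{_\phi}A_\phi]$ contains the identity and is closed under composition and inverses. The identity is immediate, since ${_{\id}}C(T)_{\id}$ has this form.

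\textbf{Composition.} Let ${_\phi}A_\phi$ and ${_\mu}B_\mu$ be given. Their composite is ${_\phi}(A\otimes_T B)_\mu$, where the left central map is $\phi\otimes 1_B$ and the right one is $1_A\otimes\mu$. In $A\otimes_T B$ the balancing relation gives
$$a\phi(f)\otimes b = a\otimes \mu(f) b,$$
so $\phi(f)\otimes 1=1\otimes\mu(f)$ for every $f\in C(T)$. The two central maps therefore coincide as maps $C(T)\to Z(A\otimes_T B)$, and the composite is again of the form ${_\eta}C_\eta$ with $\eta=\phi\otimes 1$.

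\textbf{Inverses.} This is the main obstacle. An arbitrary inverse ${_\mu}B_\nu$ of ${_\phi}A_\phi$ need not satisfy $\mu=\nu$ on the nose, so we must show its class has a representative with a single central map. By Proposition~\ref{prop:splitoffcenter},
$${_\mu}B_\nu\cong {_{\nu^{-1}\mu}}C(T)_{\id}\otimes_T{_\nu}B_\nu ,$$
so it suffices to prove $\nu^{-1}\circ\mu=\id$. Composing with ${_\phi}A_\phi$ and arguing as in the composition step gives
$${_\phi}(A\otimes_T B)_\nu\cong{_{\id}}C(T)_{\id}.$$
On the left, the balancing identifies $\phi(f)\otimes 1$ with $1\otimes\mu(f)$. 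So the left central map of the composite is $f\mapsto 1\otimes\mu(f)$ and the right one is $f\mapsto 1\otimes\nu(f)$.

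Let $X$ be the imprimitivity bimodule witnessing this equivalence; it is one by Remark~\ref{moritainvertible}. Then
$$(1\otimes\mu(f))\triangleright x = x\triangleleft f = (1\otimes\nu(f))\triangleright x$$
for all $x\in X$. Since $X$ is full, the left action is faithful (Remark~\ref{fullimpliesfaithful}), exactly as in the proof of Proposition~\ref{homeoarithmetic}. Hence $1\otimes\mu(f)=1\otimes\nu(f)$ in $A\otimes_T B$.

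It remains to pass from this identity back to $B$, i.e.\ to show that $b\mapsto 1\otimes b$ is injective on $Z(B)$. Lemma~\ref{centraliso}, applied to the composite, says $1\otimes\nu$ is a $\ast$-isomorphism onto $Z(A\otimes_T B)$, and $\nu$ is itself an isomorphism onto $Z(B)$. Together these show that $1\otimes -$ restricted to $Z(B)=\nu(C(T))$ is injective. Therefore $\mu=\nu$, and $B\cong{_\nu}B_\nu$ lies in the set.

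This completes the subgroup verification. The identity $\mu=\nu$, forced by the faithfulness argument, is the fact later reused for exactness in Theorem~\ref{splitexactseq}.
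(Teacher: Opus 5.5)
Your proof is correct. The identity and composition steps are exactly the paper's: the balancing relation $\phi(f)\otimes 1=1\otimes\mu(f)$ in $A\otimes_T B$ makes the two central maps of the composite coincide.

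The difference lies elsewhere. The paper's proof never checks closure under inverses. Its first paragraph instead shows the following: if ${_\phi}A_\phi$ is $2$-equivalent to ${_\mu}B_\nu$ via $X$, then $x\triangleleft\mu(f)=\phi(f)\triangleright x=x\triangleleft\nu(f)$, so faithfulness of the right $B$-action forces $\mu=\nu$. In other words, having equal central maps is a property of every representative of such a class.

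Your inverse step supplies what the paper omits, and it is sound. You apply the same faithfulness trick to ${_\phi}(A\otimes_T B)_\nu\simeq{_{\id}}C(T)_{\id}$, and use Lemma~\ref{centraliso} to get injectivity of $z\mapsto 1\otimes z$ on $Z(B)=\nu(C(T))$. The detour through Proposition~\ref{prop:splitoffcenter} is unnecessary, since $\mu=\nu$ already exhibits $B$ in the required form.

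One correction to your closing remark: what Theorem~\ref{splitexactseq} reuses is the paper's representative-independence statement, not the inverse statement. Your argument does recover it, however. If ${_\mu}B_\nu$ is equivalent to ${_\phi}A_\phi$, then it is an inverse of the inverse class. By your step, that inverse class has a representative ${_\alpha}D_\alpha$, and applying your step to ${_\alpha}D_\alpha$ gives $\mu=\nu$.
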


\begin{proof}
    We will first show that the equivalence classes of this form only consist of elements where both $\ast$-homomorphisms $C(T) \to Z(B)$ are equal. Suppose ${_\phi}A_\phi$ and ${_\mu}B_\nu$ are equivalent via an imprimitivity bimodule ${_A}X_B$. Because this defines a $2$-equivalence in $\AbCAlg$, $X$ satisfies the properties
    $$\textbf{(a): } \phi(f)\triangleright x = x \triangleleft \mu(f) \text{ and }\textbf{(b): }\phi(f)\triangleright x = x \triangleleft \nu(f)$$
    for all $x \in X$ and $f \in C(T)$. Then, in particular,
    $$x \triangleleft \mu(f) \overset{\textbf{(a)}}{=} \phi(f)\triangleright x \overset{\textbf{(b)}}{=} x \triangleleft \nu(f) .$$
    As $X$ is full as a Hilbert $B$-module, and the above holds for all $x \in X$ and $f \in C(T)$, we conclude that $\nu=\mu$ by Remark~\ref{fullimpliesfaithful}. So any representative of these equivalence classes will have a pair of identical central $\ast$-homomorphisms.
    \par Now, to show that this is a subgroup, first note that the set contains the identity element ${_{\id}}C(T)_{\id}$. Consider two $1$-automorphisms ${_\phi}A_\phi$ and ${_\psi}B_\psi$ of $C(T)$. Then their composite is ${_\phi}(A\otimes_TB)_\psi$, which will be in the purported subgroup if and only if the left and right maps from $C(T)\to Z(A\otimes_TB)$ are equal. However, for $f \in C(T)$, we have
    $$\phi(f)\otimes_T1_B=1_A\otimes_T \psi(f).$$
    Therefore, the composite may be more formally written
    $${_{\phi\otimes1_B}}(A\otimes_TB)_{1_A\otimes\psi}={_{\phi\otimes1_B}}(A\otimes_TB)_{\phi\otimes1_B}.$$
    We conclude that the set forms a subgroup of $\pi_1(\AbCAlg, T)$.
\end{proof}

We now have all of the necessary background to decompose $\pi_1(\AbCAlg, C(T))$ as a semidirect product.

\begin{thm}\label{splitexactseq}
There is a split exact sequence
\[
\begin{tikzcd}
0 \arrow[r] & H^3_{\operatorname{tor}}(T;\bZ) \arrow[r,"\ref{sses1}"] & \pi_1(\AbCAlg,T) \arrow[r,"\ref{sses2}"] & \arrow[l, bend right,dashed,"\ref{sses3}"'] \operatorname{Homeo}(T) \arrow[r] & 0.
\end{tikzcd}
\]
In particular, $\pi_1(\AbCAlg, T) = H^3_{\operatorname{tor}}(T;\bZ) \rtimes \operatorname{Homeo}(T).$
\end{thm}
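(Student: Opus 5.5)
I would build the three maps labelled in the diagram explicitly and then check exactness.

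The first map sends $\delta \in H^3_{\tor}(T;\bZ)$ to the class of ${_\psi}B_\psi$ constructed in Lemma~\ref{dixmierdouady1mors}. To see that this is well defined, note that two such algebras with the same Dixmier--Douady class are Morita equivalent over $T$. An imprimitivity bimodule over $T$ intertwines the single central map on each side, so it is a $2$-isomorphism in $\AbCAlg$. For the homomorphism property, I use Lemma~\ref{singlecentralmap}: the composite ${_\phi}A_\phi\otimes_T{_\psi}B_\psi$ is again of the form ${_\chi}(A\otimes_T B)_\chi$. Its Dixmier--Douady class is $\delta(A)+\delta(B)$ by Theorem~\ref{brauergroupiso}. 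Injectivity runs in reverse. Suppose ${_\psi}B_\psi$ is equivalent to ${_{\id}}C(T)_{\id}$ via a bimodule $X$. Then $X$ is an imprimitivity bimodule by Remark~\ref{moritainvertible}, and its compatibility with the central maps says it is over $T$. Hence $\delta(B)=\delta(C(T))=0$.

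The second map sends $[{_\phi}A_\psi]$ to the homeomorphism $\widehat{\psi^{-1}\circ\phi}\in\homeo(T)$. Equivalently, in terms of Proposition~\ref{spectrumof1iso}, it is $\hat\phi\circ\hat\psi^{-1}$, with the inversion chosen so that it matches the convention $\Phi\mapsto{_{\Phi^*}}C(T)_{\id}$. Lemma~\ref{centraliso} guarantees that $\psi^{-1}\circ\phi$ is an automorphism of $C(T)$. For well-definedness, suppose $X$ witnesses ${_\phi}A_\psi\simeq{_\mu}B_\nu$. Then $\phi(f)\triangleright x=x\triangleleft\mu(f)$ and $\psi(f)\triangleright x=x\triangleleft\nu(f)$. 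Substituting $f=\psi^{-1}\phi(g)$ into the second identity and comparing with the first gives $x\triangleleft\mu(g)=x\triangleleft\nu(\psi^{-1}\phi(g))$. Faithfulness (Remark~\ref{fullimpliesfaithful}) then gives $\nu^{-1}\mu=\psi^{-1}\phi$.

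For the homomorphism property, the composite is ${_{\phi\otimes 1}}(A\otimes_T B)_{1\otimes\nu}$. In $A\otimes_T B$ we have $1\otimes\nu(g)=1\otimes\mu(\mu^{-1}\nu g)=\psi(\mu^{-1}\nu g)\otimes 1$. From this one reads off that the associated automorphism is $(\mu^{-1}\nu)^{-1}$ followed appropriately by $\psi^{-1}\phi$, i.e.\ the composite of the two automorphisms in the correct order. The main bookkeeping obstacle is tracking this order against the order used in Proposition~\ref{homeoarithmetic}.

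The splitting is $\Pi$ from Proposition~\ref{homeoarithmetic}. It is an injective homomorphism, and its composite with the second map is the identity, since ${_{\Phi^*}}C(T)_{\id}$ is sent to $\Phi$.

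Surjectivity of the second map follows from the splitting. It remains to show exactness in the middle, which I expect to be the real content. Clearly the image of the first map lies in the kernel, since its representatives have equal central maps. Conversely, if $\psi^{-1}\circ\phi=\id$, then $\phi=\psi$. The $1$-morphism is then ${_\phi}A_\phi$, which by Theorem~\ref{1morctstrace} is a unital continuous-trace algebra over $T$ (via $\phi$). By Lemma~\ref{dixmierdouady1mors} its class $\delta(A)$ is torsion, and the well-definedness argument above shows $[{_\phi}A_\phi]$ equals the image of $\delta(A)$.

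For the semidirect product conclusion, I apply the standard splitting lemma for groups. One can also make the action explicit using Proposition~\ref{prop:splitoffcenter}: every class factors as $\Pi(\Phi)\otimes_T[{_\psi}A_\psi]$, which exhibits the decomposition $H^3_{\tor}(T;\bZ)\rtimes\homeo(T)$.
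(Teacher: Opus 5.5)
Your proposal is correct and follows essentially the same route as the paper. It uses the same three maps, the same faithfulness argument for well-definedness of $\widehat{\psi^{-1}\circ\phi}$, the same computation for the homomorphism property, and the same identification of the kernel of the second map with classes of ${}_\phi A_\phi$ determined by their Dixmier--Douady class. The paper also checks explicitly that ${}_\psi B_\psi$ is invertible, by noting that $A^\delta\otimes_T (A^{\delta})^{\mathrm{op}}$ has class $0$ and is hence $2$-isomorphic to ${}_{\id}C(T)_{\id}$; you leave this implicit, but your own argument that equal classes give $2$-isomorphic $1$-morphisms supplies it.
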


\begin{proof}
We describe the aforementioned split short exact sequence as follows:
\begin{enumerate}
\item \label{sses1} Given a Dixmier-Douady class $\delta \in H^3_{\operatorname{tor}}(T;\bZ)$, there is a corresponding unital continuous-trace C*-algebra $A^\delta$ with a $\ast$-isomorphism $\psi\colon C(T) \to Z(A^\delta)$ by Lemma~\ref{dixmierdouady1mors}. Notice ${}_{\id}A^\delta_{\id} \in \pi_1(\AbCAlg,T)$ as the Dixmier-Douady class of $A^\delta \otimes_T (A^{\delta})^\text{op}$ is $0 \in H^3_{\operatorname{tor}}(T;Z)$; hence, $A^\delta \otimes_T (A^{\delta})^\text{op}$ is equivalent to ${_{\id}}C(T)_{\id}$ in $\AbCAlg$, where we consider $(A^{\delta})^\text{op}$ as a $1$-morphism with the same central $\ast$-homomorphisms as $A^\delta$. Note that as any two continuous-trace C*-algebras with the same Dixmier--Douady class are Morita equivalent over $C(T)$, the above map does not depend upon the choice of $A^\delta$. Furthermore, as the C*-algebras $A^\delta$ have an identical pair of central maps $C(T) \to Z(A)$, the $1$-composition of these morphisms agrees with the composition in the Brauer group. Thus, this defines a genuine group homomorphism $H^3_{\tor}(T;\bZ) \to \pi_1(\AbCAlg, T)$, which is injective by Theorem~\ref{brauergroupiso}.

\item \label{sses2} Define $\Lambda\colon \pi_1(\AbCAlg, T) \to {\homeo}(T)$ by $\Lambda({}_\phi A_\psi)= \widehat{\psi^{-1} \circ \phi}$ (recalling that $\phi\colon C(T) \to Z(A)$ is an isomorphism). We want to verify that this is a well-defined function on $\pi_1$; thus, suppose $_\phi A_\psi$ and $_{\phi'} A'_{\psi'}$ are $2$-isomorphic via $X \colon A \Rightarrow A'$. Recall that this imprimitivity bimodule $X$ satisfies, for all $f \in C(T)$ and $x\in X$,
$$\textbf{(a): } \phi(f)\triangleright x = x \triangleleft \phi'(f) \text{ and }\textbf{(b): }\psi(f)\triangleright x = x \triangleleft \psi'(f).$$
This allows us to show, for all $f \in C(T)$ and $x \in X$, that
$$\hspace{0.9cm}\psi((\psi'{^{-1}}\circ\phi')(f))\triangleright x \overset{\textbf{(b)}}{=} x \triangleleft \psi'((\psi'{^{-1}}\circ\phi')(f)) = x\triangleleft \phi'(f) \overset{\textbf{(a)}}{=} \phi(f)\triangleright x.$$
Thus, since $X$ is a full bimodule, we conclude that $(\psi\circ \psi'{^{-1}}\circ \phi')(f)=\phi(f)$ for all $f \in C(T)$. In particular, this means that $\phi^{-1}\circ \psi = \phi'{^{-1}}\circ \psi'$, so the above map is well-defined on $\pi_1$.

To show that this is a group homomorphism, given two $1$-morphisms ${}_\phi A_\psi$ and ${}_\mu B_\nu$ in $\pi_1(\AbCAlg,T)$, the 1-morphism ${}_\phi A \otimes_T B_\nu$ has associated central homomorphisms $\phi \otimes_T 1_B$ and $ 1_A \otimes_T \nu$. If we consider an element $f \in C(T)$, we have
\begin{align*}
    (1_A \otimes_T \nu)^{-1}(\phi \otimes_T 1_B)(f)&=(1_A \otimes_T \nu)^{-1}(\phi(f) \otimes_T 1_B)\\
    &=(1_A \otimes_T \nu)^{-1}(\psi(\psi^{-1}\phi(f)) \otimes_T 1_B)\\
    &=(1_A \otimes_T \nu)^{-1}(1_A \otimes_T \mu(\psi^{-1}\phi(f))\\
    &=\nu^{-1}\mu\psi^{-1}\phi(f).
\end{align*}
We therefore see that $\Lambda(A \otimes_T B)=\widehat{\nu^{-1}\mu\psi^{-1}\phi}=(\widehat{\psi^{-1}\phi})\circ (\widehat{\nu^{-1}\mu})=\Lambda(A)\circ\Lambda(b)$, and so $\Lambda$ is a group homomorphism. Furthermore, we see that this is surjective, as for any $\Phi \in {\homeo}(T)$, we have $\Lambda({_{\Phi^*}}C(T)_{\id})=\Phi$. Finally, we show that this sequence is exact at $\pi_1$. $H^3_{{\tor}}(T;\bZ)$ is identified with morphisms of the form $_{\phi}A_\phi$ by Lemma~\ref{singlecentralmap}, which is contained in the kernel of $\Lambda$. Because any morphism in $\ker(\Lambda)$ has the form $_{\phi}A_\phi$, which has a single central $\ast$-homomorphism, its equivalence class in $\pi_1(\AbCAlg, C(T))$ is determined solely by its Dixmier--Douady class. Thus, $H^3_{\tor}(T;\bZ)$ surjects onto $\ker(\Lambda)$.

\item \label{sses3} Define (iii) to be the function $\Phi \mapsto {_{\Phi^*}}C(T)_{\id}$, which is a group homomorphism by Proposition~\ref{homeoarithmetic}. As noted in defining the map (ii), we have $\Lambda({_{\Phi^*}}C(T)_{\id})=\Phi$ for any $\Phi \in {\homeo}(T)$, so we conclude (iii) is a splitting. \qedhere
\end{enumerate}
\end{proof}

\begin{rmk}
The previous short exact sequence is not trivial in general, i.e., $\pi_1(\AbCAlg,T)$ is not a direct sum $H^3_{\operatorname{tor}}(T;\bZ) \oplus \operatorname{Homeo}(T)$. Indeed, let $S\coloneq \Sigma\mathbb{RP}^2$, the suspension of the real projective plane. Note that $H^3(S;\bZ)=\bZ_2$, and so with $T\coloneq S \sqcup S$, we have $H^3(T;\bZ)=\bZ_2 \oplus \bZ_2$, which we may also write as $\{a,b | ab=ba, a^2=b^2=1\}$. Let $A$ be a unital continuous-trace C*-algebra over $S$ with $\delta(A)=a$, where the identification of spectra corresponds to $\phi\colon C(S)\to Z(A)$. Construct a $1$-automorphism of $C(T)$ as
$${_{\phi \oplus \text{id}}}A \oplus C(S)_{\phi \oplus \text{id}}$$
First, note that we have $\delta(A \oplus C(S))=a$. Now, let $\psi$ be the swap-automorphism of $C(T)=C(S)\oplus C(S)$ given by $\psi(f,g)=(g,f)$. Then
\begin{align*}
&(_{\psi} C(T)_{\id}) \otimes_T ({_{\phi \oplus \text{id}}}A \oplus C(S)_{\phi \oplus \text{id}}) \otimes_T (_{\psi} C(T)_{\id})^{-1}\\
\cong&(_{\psi} C(T)_{\id}) \otimes_T ({_{\phi \oplus \text{id}}}A \oplus C(S)_{\phi \oplus \text{id}}) \otimes_T (_{\psi^{-1}} C(T)_{\id})\\
\cong&{_{(\phi \oplus \text{id})\circ \psi}}A \oplus C(S)_{(\phi \oplus \text{id})\circ \psi} 
\end{align*}
via the isomorphism $\sigma ((f_1,f_2) \otimes_T (a, h) \otimes_T (g_1,g_2)) = (\phi(f_1)a\phi(g_2), f_2hg_1)$. This respects the central $\ast$-homomorphisms; for, given $(f,g) \in C(T)=C(S) \oplus C(S)$, we have
\begin{align*}
    (\sigma \circ(\psi \otimes_T 1_{A \oplus C(S)} \otimes_T 1_{C(T)}))(f,g)&=\sigma((g,f) \otimes_T 1_{A \oplus C(S)} \otimes_T 1_{C(T)})\\
    &=(\phi(g),f)\\
    &=((\phi \oplus \id)\circ \psi)(f,g)
\end{align*}
and
\begin{align*}
    (\sigma \circ(1_{C(T)} \otimes_T 1_{A \oplus C(S)} \otimes_T \id))(f,g)&=\sigma(1_{C(T)} \otimes_T 1_{A \oplus C(S)} \otimes_T (f,g))\\
    &=(\phi(g),f)\\
    &=((\phi \oplus \id)\circ \psi)(f,g).
\end{align*}
Thus, $\sigma$ defines a $2$-equivalence in $\AbCAlg$ by Lemma~\ref{equivariantstariso}. Now, note that for $f, g \in C(S)$, we have
$$(\phi \oplus \id)(f,g)=\phi(f) \oplus g$$
and
$$(\phi \oplus \id) \circ \psi (f,g)=\phi(g) \oplus f.$$
Therefore, the part of the C*-algebra with trivial Dixmier--Douady class -- $C(S)$ -- is now living over the first copy of $S$ in $S \sqcup S$ rather than the second copy. The part with non-trivial Dixmier--Douady class -- $A$ -- is now living over the second copy of $S$. So we have 
$$\delta({_{(\phi \oplus \text{id})\circ \psi}}A \oplus C(S)_{(\phi \oplus \text{id})\circ \psi})=b$$
which is not $a=\delta({{_{\phi \oplus \text{id}}}A \oplus C(S)_{\phi \oplus \text{id}}})$. So the two $1$-automorphisms cannot be equivalent; that is, our semidirect product is not, in general, a direct sum.

\end{rmk}

\subsection{Results for $\pi_2$ and $\pi_3$}\label{subsec:pi2}

Having completed our analysis of $\pi_1$, we move on to describe $\pi_2$ and $\pi_3$. We will begin with some important facts for line bundles. The following results are well-known in the non-unitary setting. We include these for completeness while adapting them for the unitary setting, where the same object may be equipped with potentially different unitary structures. We recall the following definition of the Picard group.
\begin{defi}
For a compact Hausdorff space $T$, the Picard group $\operatorname{Pic}(T)$ is given by (isomorphism classes of) complex line bundles over $T$ with multiplication given by the fiberwise tensor product $\otimes$.    
\end{defi}

Because we consider invertible $2$-morphisms ${_{\id}}C(T)_{\id} \Rightarrow {_{\id}}C(T)_{\id}$, our bimodules will be equipped with but a single action of $C(T)$. However, they will be imprimitivity bimodules over $C(T)$ and therefore have two $C(T)$-valued inner products $\langle \cdot | \cdot \rangle$ and $\langle \cdot, \cdot \rangle$. A priori, these inner products may be quite different, but the following lemma proves that they mutually determine each other.

\begin{lem}\label{lem:ImprimitivityBimodOnSpaceOfSections}
Let $E \to T$ be a line bundle over $T$ with two Hermitian metrics $\langle \cdot , \cdot \rangle$ and $\langle \cdot | \cdot \rangle$ which equip the space of sections $\Gamma(E)$ with the structure of a $C(T)$-$C(T)$ imprimitivity bimodule, i.e.,
$$
{}_{C(T)}\langle f , g \rangle h = f \langle g | h \rangle_{C(T)} \qquad \forall f,g,h \in \Gamma(E).
$$
Then $\langle \cdot , \cdot \rangle = \overline{\langle \cdot | \cdot \rangle}$.
\end{lem}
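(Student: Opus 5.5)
The plan is to reduce the statement to a pointwise comparison on each fiber, using that a fiber of a line bundle is one-dimensional. Note first what is being compared. The left metric $\langle \cdot , \cdot \rangle$ is linear in the first variable and conjugate-linear in the second. The right metric $\langle \cdot | \cdot \rangle$ is conjugate-linear in the first and linear in the second. So $\overline{\langle \cdot | \cdot \rangle}$ has the same linearity type as $\langle \cdot , \cdot \rangle$, and the claim makes sense fiberwise: for every $t \in T$ and $v,w \in E_t$ we should have $\langle v , w\rangle_t = \overline{\langle v | w\rangle_t}$.

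First, fix $t \in T$ and produce a global section $s \in \Gamma(E)$ with $s(t) \neq 0$. Take a local trivialization $p^{-1}(U)\cong U\times\bC$ around $t$ and the constant section $1$ on $U$. Multiply it by an Urysohn function $\chi \in C(T)$ supported in $U$ with $\chi(t)=1$, and extend by zero. Since the inner products of sections are computed pointwise from the Hermitian metrics, evaluating the imprimitivity identity ${}_{C(T)}\langle f , g \rangle h = f \langle g | h \rangle_{C(T)}$ at $t$ gives an identity in $E_t$:
$$\langle f(t), g(t)\rangle_t\, h(t) = f(t)\,\langle g(t) | h(t)\rangle_t .$$
Apply this with $f=g=h=s$. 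Because $s(t)\neq 0$, we can cancel $s(t)$ in the one-dimensional space $E_t$, giving $\langle s(t), s(t)\rangle_t = \langle s(t)| s(t)\rangle_t$. Both sides are positive reals.

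Second, use one-dimensionality. Any $v,w \in E_t$ can be written $v=\alpha s(t)$ and $w=\beta s(t)$ with $\alpha,\beta\in\bC$. Sesquilinearity gives
$$\langle v,w\rangle_t=\alpha\bar\beta\,\langle s(t),s(t)\rangle_t, \qquad \overline{\langle v|w\rangle_t}=\overline{\bar\alpha\beta\,\langle s(t)|s(t)\rangle_t}=\alpha\bar\beta\,\langle s(t)|s(t)\rangle_t ,$$
where the last step uses that $\langle s(t)|s(t)\rangle_t$ is real. By the first step these agree. Since $t$ was arbitrary, $\langle \cdot , \cdot \rangle = \overline{\langle \cdot | \cdot \rangle}$ as metrics. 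Consequently ${}_{C(T)}\langle f,g\rangle = \overline{\langle f|g\rangle_{C(T)}}$ as functions for all sections.

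The main obstacle, though minor, is justifying that the module identity may be evaluated fiberwise. This needs the values of the $C(T)$-valued inner products at $t$ to depend only on the values of the sections at $t$, which holds by the definition of the Serre--Swan inner product $\langle f|g\rangle(t)=\langle f(t)|g(t)\rangle_E$. It also needs a global section nonvanishing at $t$, which exists by local triviality together with Urysohn's lemma on the compact Hausdorff space $T$.
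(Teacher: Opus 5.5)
Your proof is correct, but it localizes differently from the paper. The paper first treats the trivial bundle globally. It uses the constant section $1$ as a generator of the free rank-one module and computes ${}_{C(T)}\langle f,g\rangle = {}_{C(T)}\langle f,g\rangle 1 = f\langle g|1\rangle_{C(T)} = \langle g|f\rangle_{C(T)} = \overline{\langle f|g\rangle}_{C(T)}$ entirely with $C(T)$-valued inner products. For a general line bundle it writes ${}_{C(T)}\langle f,g\rangle = \sum_i {}_{C(T)}\langle \sigma_i f, g\rangle$ for a partition of unity subordinate to trivializing compact sets $K_i$, and applies the trivial case on each $K_i$.

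You instead localize at a single point. A bump section $s$ with $s(t)\neq 0$, evaluated in ${}_{C(T)}\langle s,s\rangle s = s\langle s|s\rangle_{C(T)}$, matches the two metrics on $s(t)$. One-dimensionality of $E_t$ and sesquilinearity then finish.

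Both proofs run on the same test-vector identity; they differ in what they need. The paper's trivial case is purely module-theoretic: it uses only that $\Gamma(T\times\bC)$ is free on $1$. However, its patching step tacitly runs that argument for sections over $K_i$ rather than over $T$, a restriction/extension step the paper does not spell out. Your bump-section construction supplies that localization explicitly. The cost is that you rely on the pointwise description of both inner products and of the left action, which the hypothesis provides. A minor remark: cancelling $s(t)$ in your first step does not need $\dim E_t = 1$; that fact is used only in your second step.
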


\begin{proof}
First, if $E = T \times \bC$ is the trivial line bundle, we may use the constant section $1 \in \Gamma(E)$ to compute, for any $f,g \in \Gamma(E)$,
$$
_{C(T)}\langle f, g \rangle = 
{_{C(T)}}\langle f, g \rangle 1 = 
f \langle g | 1 \rangle _{C(T)}
=
\langle g | f \rangle _{C(T)}
= 
\overline{\langle f | g \rangle}_{C(T)}.
$$
The case of a general line bundle $E \xrightarrow{p} T$ follows by a simple partition of unity argument. Indeed, choose a partition of unity $\{\sigma_i \colon T \to \bC\}_i$ on $T$ such that each $K_i \coloneqq \clo{\operatorname{supp}}\, \sigma_i \subseteq T$ is compact and locally trivializable, i.e. $p^{-1}(K_i) \cong K_i \times \bC$ as bundles. Applying our previous argument over each $K_i$, we deduce
\[
_{C(T)}\langle f,g \rangle = 
\sum_{i} {_{C(T)}}\langle \sigma_i f, g \rangle
=
\sum_{i} \langle \overline{\sigma_i f | g \rangle}_{C(T)}
= 
\overline{\langle f | g \rangle}_{C(T)}.
\]
for all $f,g \in \Gamma(E)$. As the inner products on $\Gamma(E)$ agree up to complex conjugation, it follows that the Hermitian metrics ${_L}\langle \cdot, \cdot \rangle$ and $\langle \cdot | \cdot \rangle_R$ agree up to complex conjugation as well.
\end{proof}

This means that our line bundles that produce imprimitivity bimodules will really only have a single choice of Hermitian metric. However, the Picard group $\operatorname{Pic}(T)$ does not involve Hermitian metrics. The next lemma shows that the choice of Hermitian metric is essentially superfluous. 

\begin{lem}\label{lem:HermitianStructuresOnBundleAreIsomorphic}
Let $E \to T$ be a line bundle with two hermitian structures $\langle \cdot | \cdot \rangle_1$ and $\langle \cdot | \cdot \rangle_2$. Then there is a unitary isomorphism between the (right) Hilbert C*-modules $(\Gamma(E),\langle \cdot | \cdot \rangle_1)$ and $(\Gamma(E),\langle \cdot | \cdot \rangle_2)$.    
\end{lem}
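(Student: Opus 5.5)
The idea is to compare the two metrics fiberwise by a positive scalar function and take its square root. Since $E$ is a line bundle, each fiber $E_t$ is one-dimensional, so any two inner products on $E_t$ are positive scalar multiples of one another. I would define $h\colon T \to (0,\infty)$ by
$$h(t) = \frac{\langle v | v\rangle_2}{\langle v | v \rangle_1}, \qquad v \in E_t\setminus\{0\},$$
which does not depend on the choice of $v$ because the fiber is one-dimensional. Then $\langle v|w\rangle_2 = h(t)\langle v|w\rangle_1$ for all $v,w\in E_t$: by sesquilinearity both sides are determined by their value on a single spanning vector.

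The first real step is continuity of $h$. Given $t_0 \in T$, I would choose a neighborhood $U$ over which $E$ is trivial, $p^{-1}(U)\cong U\times\bC$. Let $s(t)$ correspond to $(t,1)$; this is a continuous nonvanishing local section. Then $t\mapsto \langle s(t)|s(t)\rangle_i$ is continuous and strictly positive on $U$ for $i=1,2$, because Hermitian metrics are continuous and positive definite fiberwise. Hence $h = \langle s|s\rangle_2/\langle s|s\rangle_1$ is continuous on $U$, and so $h$ is continuous and strictly positive on all of $T$. By compactness of $T$, $h$ is bounded above and bounded away from $0$, and so $h^{-1/2}\in C(T)$ as well.

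Next I would define $u\colon (\Gamma(E),\langle\cdot|\cdot\rangle_1)\to(\Gamma(E),\langle\cdot|\cdot\rangle_2)$ by $u(f) = h^{-1/2} f$ (pointwise scalar multiplication). It is a right $C(T)$-module map, since the module action is also pointwise multiplication. It is bijective, with inverse $f\mapsto h^{1/2}f$. It is isometric at the level of inner products, as
$$\langle uf|ug\rangle_2(t) = h(t)\,h(t)^{-1}\langle f(t)|g(t)\rangle_1 = \langle f|g\rangle_1(t).$$
A surjective module map preserving the $C(T)$-valued inner product is adjointable with $u^\dagger=u^{-1}$, because $\langle uf|g\rangle_2=\langle uf|u u^{-1}g\rangle_2=\langle f|u^{-1}g\rangle_1$. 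So $u$ is the required unitary.

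The only point needing care is the continuity and positivity of $h$, which is handled by the local trivialization argument above; the rest is routine. I would also note that the same construction gives a unitary isomorphism of bundles $E\to E$, which matches $\Gamma(u)$ under Serre--Swan (Theorem~\ref{thm:SerreSwan}).
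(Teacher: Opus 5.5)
Your proof is correct, but it takes a more hands-on route than the paper.

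The paper constructs nothing explicitly. It cites Theorem~2.5 of \cite{K25} for a bundle automorphism $f\colon E\to E$ carrying $\langle\cdot|\cdot\rangle_1$ isometrically to $\langle\cdot|\cdot\rangle_2$, and notes that $f^\dagger=f^{-1}$. It then uses that $\Gamma$ is a $\dagger$-functor (Lemma~\ref{lem:swandaggerfunctor}) to conclude that $\Gamma(f)$ is unitary.

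You instead use that the rank is one, so the two metrics differ by a single continuous positive function $h$. The isometry is then just the module action of $h^{-1/2}\in C(T)$. You check unitarity directly on sections: a surjective map preserving the inner product has $u^\dagger=u^{-1}$. So you need neither the external theorem nor the $\dagger$-functor lemma.

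Your approach gives a self-contained proof with an explicit formula for the unitary. Since the unitary is multiplication by a function, it is also plainly compatible with the coinciding left and right $C(T)$-actions used later for $\pi_2$.

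The citation buys generality: it handles bundles of arbitrary finite rank. There the comparison between the metrics is a fiberwise positive operator $H_t$ with $\langle v|w\rangle_2=\langle v|H_tw\rangle_1$ rather than a scalar. One would take $H_t^{-1/2}$ as the isometry and must additionally check that $t\mapsto H_t^{-1/2}$ is continuous. Your argument adapts to that case with this modification, but as written it relies essentially on rank one.
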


\begin{proof}
By Theorem 2.5 of \cite{K25}, there is an isometry $f \colon(E, \langle \cdot |\cdot\rangle_1) \to (E, \langle \cdot | \cdot\rangle_2)$. That is, $f$ is a bundle map such that $\langle f(e_1) | f(e_2)\rangle_2 = \langle e_1, e_2 \rangle_1$. Furthermore, as $f$ is also a bundle isomorphism (from \cite{K25}), we have that $f^\dagger=f^{-1}$, in the sense that
$$\langle f(e_1) | e_2 \rangle_2 = \langle e_1 | f^{-1}(e_2)\rangle _1.$$
By Lemma~\ref{lem:swandaggerfunctor}, $\Gamma$ is a $\dagger$-functor, so $\Gamma(f)\colon (\Gamma(E),\langle \cdot | \cdot \rangle_1) \to (\Gamma(E),\langle \cdot | \cdot \rangle_2)$ is a unitary isomorphism.
\end{proof}

Having justified that the pair of Hermitian metrics on $E$ that yield an imprimitivity bimodule $\Gamma(E)$ do not affect the isomorphism class of $\Gamma(E)$, the characterization of $\pi_2$ as $\operatorname{Pic}(T)$ follows from the monoidal version of the Serre--Swan Theorem. We produce the precise statement of this equivalence in the below corollary, but we refer the reader to Appendix~\ref{Sec:Serre-Swan} for the proof.

\begin{cor}[Serre--Swan]\label{cor:Serre-Swan}
There is a monoidal $\dagger$-equivalence $E \mapsto \Gamma(E)$ between the monoidal $\dagger$-categories of Hermitian line bundles over $T$ and Hilbert $C(T)$-bimodules.
\end{cor}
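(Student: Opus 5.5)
The plan is to deduce the corollary from Theorem~\ref{thm:SerreSwan}: restrict the Serre--Swan equivalence to line bundles, and add the two things the statement needs beyond it, namely compatibility with the monoidal structures and a description of the bimodule structure. First, observe that $\Gamma$ restricts to a fully faithful $\dagger$-functor from Hermitian line bundles to Hilbert $C(T)$-modules. We make $\Gamma(E)$ a $C(T)$-bimodule by letting $C(T)$ act on the left by the same pointwise multiplication. We give it the left inner product ${}_{C(T)}\langle f,g\rangle(t)=\langle g(t)\,|\,f(t)\rangle_E$.

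The imprimitivity identity ${}_{C(T)}\langle f,g\rangle h=f\langle g|h\rangle$ is checked fiberwise: on a line, $\langle g(t)|h(t)\rangle f(t)=\langle g(t)|f(t)\rangle h(t)$, since both sides are bilinear in $(f(t),h(t))$ in the same way on a one-dimensional space. Fullness follows from local triviality together with a partition of unity, since locally we have nonvanishing sections. Lemma~\ref{lem:ImprimitivityBimodOnSpaceOfSections} shows that this is the only possible left inner product. Hence every invertible Hilbert $C(T)$-bimodule with equal left and right central actions arises this way once essential surjectivity is known.

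For essential surjectivity onto the invertible bimodules, take an imprimitivity ${}_{C(T)}X_{C(T)}$ over $T$.
\begin{enumerate}
\item It is finitely generated projective: the unit $1$ lies in $\langle X|X\rangle$, so $1=\sum_i\langle x_i|y_i\rangle$ with finitely many terms. Then $x\mapsto(\langle y_i|x\rangle)_i$ exhibits $X$ as a direct summand of $C(T)^n$, and Theorem~\ref{thm:SerreSwan} gives $X\cong\Gamma(E)$.
\item The rank of $E$ is one: each fiber $X\otimes_{C(T)}\bC_t$ is an imprimitivity $\bC$-$\bC$ bimodule, so it has dimension one.
\end{enumerate}
Lemma~\ref{lem:HermitianStructuresOnBundleAreIsomorphic} then says the choice of metric is irrelevant up to unitary isomorphism.

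For the monoidal structure, I define the natural map
\[
\mu_{E,F}\colon \Gamma(E)\otimes_{C(T)}\Gamma(F)\to\Gamma(E\otimes F),\qquad f\otimes g\mapsto\bigl(t\mapsto f(t)\otimes g(t)\bigr).
\]
I then show that $\mu_{E,F}$ is a unitary bimodule isomorphism compatible with the associators and unitors.
\begin{enumerate}
\item Inner products are preserved: $\langle f\otimes g|f'\otimes g'\rangle=\langle g|\langle f|f'\rangle g'\rangle$ evaluates pointwise to $\langle f(t)|f'(t)\rangle\langle g(t)|g'(t)\rangle$, which is the tensor metric. So $\mu$ is isometric and well defined on the balanced tensor product.
\item The range is dense: locally, products of local frames span the fibers, and a partition of unity globalizes this (Stone--Weierstrass type density in the $C(T)$-norm). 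Since the range is closed, $\mu$ is surjective.
\item Coherence (the hexagon and unit conditions) holds because both sides of each diagram are given by the same pointwise formulas.
\end{enumerate}

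I expect the main obstacle to be the surjectivity of $\mu$ together with the finite-generation argument. What has to be shown carefully is that the algebraic relative tensor product, after completion, agrees with sections of the fiberwise tensor product; I will handle this using local trivializations and a finite partition of unity.
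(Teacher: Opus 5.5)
Your argument is correct in outline, but it proves the central step, unitarity of $\mu_{E,F}$, by a different route from the paper. The paper (Appendix~\ref{Sec:Serre-Swan}) works with arbitrary finite-rank bundles and right Hilbert modules. It first treats trivial $E,F$ using global orthonormal frames. It then embeds $E$ and $F$ as orthogonal summands of the trivial bundles $E\oplus E^\perp$ and $F\oplus F^\perp$ (Lemma~\ref{lem:trivialsummand}). Finally it transports injectivity, surjectivity and unitarity along $\Gamma(i)$ and $\Gamma(\rho)=\Gamma(i)^\dagger$ (Lemma~\ref{lem:daggerinclusion}).

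You instead observe that $\mu_{E,F}$ preserves the $C(T)$-valued inner products. That single fact gives well-definedness on the completed balanced tensor product, isometry, and closed range. Surjectivity then comes from local frames and a finite partition of unity; in fact finite sums of simple tensors already hit every section, so no density step is needed.

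What each route buys:
\begin{itemize}
\item Your route is shorter. It avoids the complementary bundles and the bookkeeping of choosing compatible metrics on $E\oplus E^\perp$ and $F\oplus F^\perp$.
\item The paper's route replaces local computations with functoriality of $\Gamma$ and the trivial case.
\item You also supply the bimodule-specific content directly: left inner product, imprimitivity identity, fullness, finite generation, and rank one. The paper gets these in the main text from Lemma~\ref{lem:ImprimitivityBimodOnSpaceOfSections} and a citation to Rieffel, and leaves rank one implicit.
\end{itemize}

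One formula needs fixing. From $1=\sum_i\langle x_i|y_i\rangle$, the map $x\mapsto(\langle y_i|x\rangle)_i$ does not obviously split via $(a_i)\mapsto\sum_i x_i a_i$. The reason is that $\sum_i x_i\langle y_i|x\rangle=\bigl(\sum_i{}_{C(T)}\langle x_i,y_i\rangle\bigr)x$, and at that stage you do not know that $\sum_i{}_{C(T)}\langle x_i,y_i\rangle=1$. There are two easy repairs:
\begin{itemize}
\item use fullness of the left inner product to write $1=\sum_i{}_{C(T)}\langle x_i,y_i\rangle$; or
\item keep your decomposition and use $x=x\langle 1\rangle=\sum_i{}_{C(T)}\langle x,x_i\rangle\,y_i$ (here $x\langle 1\rangle$ just means $x\triangleleft 1$). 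This splits $X$ off $C(T)^n$ via $x\mapsto({}_{C(T)}\langle x,x_i\rangle)_i$, because the left and right actions coincide.
\end{itemize}
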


We can now prove that $\pi_2(\AbCAlg,C(T))$ is isomorphic to $\operatorname{Pic}(T)$.
\begin{thm}
For $T \in \AbCAlg$, $\pi_2(\AbCAlg,T) \cong \operatorname{Pic}(T)$. Furthermore, when $T$ has the homotopy type of a CW-complex, $\pi_2(\AbCAlg, T) \cong H^2(T;\bZ)$.    
\end{thm}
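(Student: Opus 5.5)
Here is the plan: build a group isomorphism $\pi_2(\AbCAlg,T)\to\operatorname{Pic}(T)$, sending an invertible $2$-morphism to a line bundle via Serre--Swan.

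\textbf{Step 1: identify the 2-morphisms.} An invertible $2$-morphism $\id_T\Rightarrow\id_T$ is a correspondence ${}_{C(T)}X_{C(T)}$. By Remark~\ref{moritainvertible} it is an imprimitivity bimodule. It is compatible with the central maps, which are both $\id$, so the left and right actions of $C(T)$ agree. In particular the Rieffel homeomorphism $h_X$ is the identity.

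\textbf{Step 2: show $X$ comes from a line bundle.} Since $X$ is an imprimitivity bimodule, the unital algebra $C(T)$ acts on $X$ as the compacts $\mathcal{K}_{C(T)}(X)$. Hence the identity operator is a finite sum $\sum_i \theta_{x_i,x_i}$, and $X$ is finitely generated projective as a right Hilbert $C(T)$-module. Theorem~\ref{thm:SerreSwan} then gives a Hermitian vector bundle $E$ with $X\cong\Gamma(E)$. To see that $E$ has rank one, localize at $t\in T$: the fiber $E_t$ is an imprimitivity bimodule between $\bC$ and $\mathcal{K}(E_t)$. Because the left action of $C(T)$ is by the same scalars $f(t)$, we get $\mathcal{K}(E_t)=\bC$, so $\dim E_t=1$. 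Equivalently, $\dim_{\mathcal{K}(X)}=1$ on $\hat{C(T)}=T$.

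Conversely, for any line bundle $E$, the module $\Gamma(E)$ with left inner product $\overline{\langle\cdot|\cdot\rangle}$ is a $C(T)$-$C(T)$ imprimitivity bimodule with equal actions. So every line bundle arises.

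\textbf{Step 3: well-definedness, injectivity and the group law.} Two such $2$-morphisms are $3$-isomorphic exactly when there is a unitary bimodule isomorphism between them. By Lemma~\ref{lem:ImprimitivityBimodOnSpaceOfSections}, the left inner product is determined by the right one. By Lemma~\ref{lem:HermitianStructuresOnBundleAreIsomorphic}, the choice of Hermitian metric is immaterial up to unitary isomorphism. Together with Theorem~\ref{thm:SerreSwan} being an equivalence of C*-categories, this shows that unitary isomorphism classes of such $X$ biject with isomorphism classes of line bundles.

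Composition in $\pi_2$ is the relative tensor product $X\otimes_{C(T)}Y$. By Corollary~\ref{cor:Serre-Swan}, $\Gamma(E)\otimes_{C(T)}\Gamma(F)\cong\Gamma(E\otimes F)$ unitarily, so the bijection is a group isomorphism onto $\operatorname{Pic}(T)$.

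\textbf{Step 4: the CW case.} When $T$ has the homotopy type of a CW-complex, compose with the first Chern class isomorphism $\operatorname{Pic}(T)\cong H^2(T;\bZ)$ of \cite[Proposition 3.10]{H03}.

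The main obstacle is Step 2, specifically showing that $X$ is finitely generated projective of rank exactly one. I would first try the compact-operator argument given there, passing fiberwise through the quotients $C(T)\to\bC$ at each $t$.
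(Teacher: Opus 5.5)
Your proposal is correct and takes essentially the paper's route: finite generation and projectivity of the imprimitivity bimodule (the paper cites Rieffel, where you give the frame argument via $1\in\mathcal{K}_{C(T)}(X)$), Serre--Swan with the two Hermitian-metric lemmas for the bijection with $\operatorname{Pic}(T)$, the monoidal Serre--Swan equivalence for the group law, and the first Chern class in the CW case. Your explicit rank-one check and your construction of an imprimitivity bimodule from an arbitrary line bundle cover points the paper leaves implicit, so they add detail without changing the approach.
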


\begin{proof}
By the proposition on page 291 of \cite{R80}, a $C(T)$-$C(T)$ imprimitivity bimodule is finitely generated and projective. Thus, using Lemma \ref{lem:ImprimitivityBimodOnSpaceOfSections} and Corollary \ref{cor:Serre-Swan}, we see that unitary isomorphism classes of imprimitivity bimodules over $C(T)$ are in correspondence with unitary isomorphism classes of Hermitian line bundles over $T$. As every complex line bundle over the compact space $T$ admits a Hermitian metric (\cite[Proposition~1.2]{H03}), Lemma \ref{lem:HermitianStructuresOnBundleAreIsomorphic} proves that the latter are in correspondence with isomorphism classes of line bundles over $T$, i.e., $\pi_2(\AbCAlg,T) \cong \operatorname{Pic}(T)$. Since the correspondence from Corollary \ref{cor:Serre-Swan} is monoidal, these are indeed isomorphic as groups.
\par Finally, when $T$ has the homotopy type of a CW-complex, by Proposition 3.10 of \cite{H03}, the map sending a line bundle $E$ to its first Chern class $c_1(E)$ is an isomorphism from $\text{Pic}(T)$ to $H^2(T;\bZ)$.
\end{proof}

Proving that $\pi_3(\AbCAlg, C(T))=C(T)^\times$ is straightforward, especially because we have ``topped-out" the structure of $\AbCAlg$ and the only equivalence relation on our $3$-morphisms is equality.

\begin{thm}
For $T \in \AbCAlg$, $\pi_3(\AbCAlg,T) \cong C(T)^\times$.
\end{thm}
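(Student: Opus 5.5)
By definition, $\pi_3(\AbCAlg,T)$ is the group of invertible $3$-morphisms $\id_{\id_T}\Rrightarrow\id_{\id_T}$, with group operation given by composition. The plan is to identify this set concretely and then read off the group structure.

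First we unwind the identities. The identity $1$-morphism of $C(T)$ is ${_{\id}}C(T)_{\id}$. Its identity $2$-morphism is the C*-correspondence ${}_{C(T)}C(T)_{C(T)}$, with left and right actions given by multiplication and right inner product $\langle f|g\rangle = \overline{f}g$. A $3$-morphism $\id_{\id_T}\Rrightarrow\id_{\id_T}$ is therefore an adjointable $C(T)$-$C(T)$ bimodule map $u\colon C(T)\to C(T)$.

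Next we show that right $C(T)$-linearity already forces $u$ to be multiplication by an element of $C(T)$. Setting $h=u(1)$, we get
$$u(g)=u(1\triangleleft g)=u(1)g=hg.$$
Conversely, every $h\in C(T)$ gives such a map $m_h\colon g\mapsto hg$. It is automatically a left module map, because $C(T)$ is commutative. It is adjointable with $m_h^\dagger=m_{\overline h}$, since
$$\langle hf|g\rangle=\overline{hf}\,g=\overline f\,(\overline h g)=\langle f|\overline h g\rangle.$$
Thus $h\mapsto m_h$ is a bijection $C(T)\to\mathcal{L}_{C(T)}(C(T))\cap\{\text{left }C(T)\text{-linear}\}$. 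Under this bijection, composition corresponds to multiplication: $m_h\circ m_k=m_{hk}$, and $m_1=\id$.

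Finally, $m_h$ is invertible exactly when $h$ is invertible in $C(T)$. If $m_h\circ m_k=\id$, then $hk=1$. Conversely, if $h\in C(T)^\times$, then $m_{h^{-1}}$ is an inverse. Hence $h\mapsto m_h$ restricts to a group isomorphism $C(T)^\times\cong\pi_3(\AbCAlg,T)$.

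As the paper notes, the only equivalence relation at the top level is equality, so no quotient needs to be taken. The one point that needs a remark is the choice of group operation. The Eckmann--Hilton argument shows that horizontal composition $\otimes$ (and $\boxtimes$) of such endomorphisms agrees with vertical composition and is commutative. Concretely, under the unitor $C(T)\otimes_{C(T)}C(T)\cong C(T)$, the map $m_h\otimes m_k$ corresponds to $m_{hk}$. So all possible group structures coincide with pointwise multiplication in $C(T)^\times$.

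I expect no real obstacle here. The only step requiring care is checking that identifying $\id_{\id_T}$ with $C(T)$ via the unitors is compatible with the compositions, and that is a routine verification.
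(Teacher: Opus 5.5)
Your proposal is correct and takes essentially the same approach as the paper. The paper identifies $\id_{\id_T}$ with ${}_{C(T)}C(T)_{C(T)}$ and notes that its bimodule endomorphisms are $Z(C(T))=C(T)$, so the invertible ones form $C(T)^\times$; you spell out the multiplication-operator computation explicitly and add a harmless Eckmann--Hilton remark about the choice of composition.
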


\begin{proof}
Recall that $\id_{\id_T} = C(T)$ as a Hilbert C*-bimodule over itself. Since $C(T)$ is abelian and unital, 
$$\End({}_{C(T)}C(T)_{C(T)}) \cong Z(C(T)) = C(T).$$ Hence, the invertible maps are given by $\pi_3(\AbCAlg,C(T)) \cong C(T)^\times$.
\end{proof}

\section{Actions on \texorpdfstring{$\pi_2$}{2nd} and \texorpdfstring{$\pi_3$}{3rd homotopy groups}}\label{section:actions}

Having computed the three homotopy groups $\pi_1,\pi_2$ and $\pi_3$, we are now ready to compute some of the additional data that classifies this homotopy $3$-type. We give the following definition of the actions of $\pi_1(\AbCAlg, C(T))$ on $\pi_2(\AbCAlg, C(T))$ and $\pi_3(\AbCAlg, C(T))$.

\begin{defi}
    We define the actions of $\pi_1$ on $\pi_2$ and $\pi_3$ as follows: Let ${_\phi}A_\psi$ be in $\pi_1(\AbCAlg,C(T))$, and choose a $2$-isomorphism $Y\colon A \otimes_T {_{\id}}C(T)_{\id}\otimes_T A^{-1} \Rightarrow{_{\id}}C(T)_{\id}$. For $X \in \pi_2(\AbCAlg,C(T))$, we define $A \curvearrowright X$ as the bimodule
    $$Y^{-1}\otimes (\id_A \otimes_T X \otimes_T \id_{A^{-1}}) \otimes Y.$$
    This action is well-defined as both $\pi_1$ and $\pi_2$ are defined up to equivalence. For $f \in \pi_3(\AbCAlg, C(T))$, we define $A \curvearrowright f$ using the $3$-morphism
    $$\tilde{f}\coloneq\id_{Y^{-1}} \otimes (1_A \otimes_T f \otimes_T 1_{A^{-1}}) \otimes \id_{Y}$$
    as a $3$-automorphism of $A \curvearrowright \id_{C(T)}$. However, in general, $A \curvearrowright \id_{C(T)}$ is not equal to $\id_{C(T)}$, only isomorphic. Thus, we choose a $3$-isomorphism $g\colon A \curvearrowright\id_{C(T)} \Rrightarrow \id_{C(T)}$ and define $A\curvearrowright f \coloneq g \circ \tilde{f} \circ g^{-1}$. Again, this is well-defined, as everything is up to equivalence.
\end{defi}

\begin{rmk}\label{isomorphismmoduleactions}
    Recall that $A \otimes_T {_{\id}}C(T)_{\id} \otimes_T A^{-1}$ is $\ast$-isomorphic to ${_{\id}}C(T)_{\id}$ in a way that respects the action, so we can construct an invertible bimodule $Y$ from this isomorphism by Lemma~\ref{equivariantstariso}. In this case, $2$-composing with $Y$ and $Y^{-1}$ simply yields $\id_A \otimes_T X \otimes_T \id_{A^{-1}}$ as a $C(T)$-$C(T)$ bimodule with the actions given by the $\ast$-isomorphism.
\end{rmk}

\subsection{Actions by Cohomology}

Because $H^3_{\tor}(T;\bZ)$ is an abelian group, intuition suggests that we should be able to braid the $1$-morphisms in the definition of the actions. This is made precise in the following lemma.

\begin{lem}
    Let $A\coloneq {_\phi}A_\phi\in \pi_1(\AbCAlg, C(T))$. Then the functor defined by $X \mapsto Y^{-1} \otimes(\id_A \otimes_T X \otimes_T \id_{A^{\op}}) \otimes Y$ and $f \mapsto \id_{Y^{-1}} \otimes(1_A \otimes_T f \otimes_T 1_{A^{\op}})\otimes \id_Y$ is naturally unitarily isomorphic to the identity functor $($for $X \in \pi_2(\AbCAlg, C(T))$ and $f \in \pi_3(\AbCAlg, C(T)))$.
\end{lem}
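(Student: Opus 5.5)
We would make the conjugation functor explicit and then write down the natural isomorphism directly. By Lemma~\ref{singlecentralmap}, take a representative ${_\phi}A_\phi$ with $A$ a unital continuous-trace algebra over $T$ (Theorem~\ref{1morctstrace}), and take its inverse to be ${_\phi}A^{\op}_\phi$. The composite $A\otimes_T C(T)\otimes_T A^{\op}\cong A\otimes_T A^{\op}$ carries both central maps equal to $f\mapsto \phi(f)\otimes 1$. By Remark~\ref{isomorphismmoduleactions}, we may then compute inside $A\otimes_T A^{\op}$ rather than after conjugating by $Y$.

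The key observation is that an element $X\in\pi_2$ is an imprimitivity $C(T)$-bimodule with a single $C(T)$-action, so $X\cong\Gamma(E)$ for a line bundle $E$ (Corollary~\ref{cor:Serre-Swan}). Consider the $A$-$A$ bimodule $\id_A\otimes_T X = A\otimes_T X$. Because $C(T)$ is central in $A$ and acts identically on both sides of $X$, there is a flip unitary
\[
\sigma_X\colon A\otimes_T X\to X\otimes_T A,\qquad a\otimes\xi\mapsto \xi\otimes a .
\]
To check it, note that $X\otimes_T A$ is an $A$-$A$ bimodule via the actions on $A$, and that the inner products agree: $\langle a\otimes \xi\mid b\otimes\eta\rangle = a^\dagger b\,\phi(\langle\xi|\eta\rangle)$, which is the same on both sides since $\phi(C(T))$ is central. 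The map is natural in $X$, since for a bimodule map $f$ we have $\sigma\circ(1\otimes f)=(f\otimes 1)\circ\sigma$.

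Using $\sigma_X$, we obtain
\[
A\otimes_T X\otimes_T A^{\op}\cong X\otimes_T (A\otimes_T A^{\op}) \cong X\otimes_T C(T)\cong X .
\]
Here the middle step uses the fixed $2$-isomorphism $Y$ from $A\otimes_T A^{\op}$ to $C(T)$, which exists because the Dixmier--Douady class of $A\otimes_T A^{\op}$ is trivial. The last step is the unitor. Each step is unitary and natural in $X$. The same chain is compatible with $f\mapsto 1_A\otimes f\otimes 1_{A^{\op}}$, and this compatibility is exactly the naturality condition. Hence the $\pi_3$-part follows as well: $g\circ\tilde f\circ g^{-1}=f$ once we take $g$ to be the component of the isomorphism at $\id_{C(T)}$.

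The main obstacle is making sure the composite is really the conjugated object $Y^{-1}\otimes(\cdots)\otimes Y$ and not merely something abstractly isomorphic to it. Concretely, we must check that after moving $X$ past $A$ the two copies of $Y$ cancel. That is, the isomorphism $Y^{-1}\otimes(X\otimes_T(A\otimes_T A^{\op}))\otimes Y\cong X$ should come from the interchange law and the invertibility $Y^{-1}\otimes Y\cong \id$, both of which hold up to unitary coherence in the C*-3-category. We would first settle the strict case of $\sigma_X$ using $C(T)$-balancedness on elementary tensors, extending by density and completeness. After that we would invoke the coherence of $\AbCAlg$ from \cite{Ferrer24} to handle the associators.
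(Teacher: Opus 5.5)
Your proposal is correct and follows essentially the same route as the paper. Your flip $\sigma_X$ (tensored with $1_{A^{\op}}$) is exactly the paper's permutation $\tau(a\otimes_T x\otimes_T b)=x\otimes_T a\otimes_T b$, and your cancellation of $Y$ via interchange and $Y^{-1}\otimes Y\cong\id$ matches the paper's choice $Y=Z\otimes Y'$ followed by the chain ending in $X\otimes_T((Y')^{-1}\otimes Y')\cong X$.

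Two small remarks. First, the Serre--Swan description $X\cong\Gamma(E)$ is never actually used. Second, the hypothesis ${_\phi}A_\phi$ enters precisely in the well-definedness of the flip: matching $a\phi(f)\otimes\xi=a\otimes f\triangleright\xi$ on one side with $\xi\triangleleft f\otimes a=\xi\otimes\phi(f)a$ on the other requires the left and right central maps of $A$ to coincide. The paper emphasizes this point, and you should state it explicitly rather than appeal only to centrality.
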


\begin{proof}
    We have that $A \otimes_T {_{\id}}C(T)_{\id} \otimes_T A^\op$ is $\ast$-isomorphic to $A \otimes_T A^\op$ via the map
    $$a \otimes_T f \otimes_T b \mapsto a\phi(f)\otimes_T b.$$
    This $\ast$-isomorphism preserves the central $\ast$-homomorphisms, and thus induces a $2$-equivalence $$Z\colon A \otimes_T {_{\id}}C(T)_{\id} \otimes_T A^\op \Rightarrow A \otimes_T A^\op$$ by Lemma~\ref{equivariantstariso}. By Remark~\ref{isomorphismmoduleactions}, we then have that $Z^{-1} \otimes (\id_A \otimes_T X \otimes_T \id_{A^{\op}})\otimes Z$ is isomorphic to $\id_A \otimes_T X \otimes_T \id_{A^\op}$ as an $A\otimes_T A^{\op}$ bimodule via the actions
    $$(a \otimes_T b) \triangleright (c \otimes_T x \otimes_T d)= ac \otimes_T x \otimes_T db$$
    and
    $$ (c \otimes_T x \otimes_T d) \triangleleft (a \otimes_T b) = ca \otimes_T x \otimes_T bd,$$
    and is furthermore equipped with the inner products
    $$\langle a \otimes_T x \otimes_T b | c \otimes_T y \otimes_T d\rangle = a^\dagger c \phi(\langle x | y \rangle) \otimes_T db^\dagger$$ 
    and
    $$\langle a \otimes_T x \otimes_T b , c \otimes_T y \otimes_T d\rangle = ac^\dagger \phi(\langle x , y \rangle )\otimes_T d^\dagger b.$$ 
    Note that multiplication in the third tensor factor is reversed because the elements lie in $A^\op$. We first claim that $Z^{-1} \otimes (\id_A \otimes_T X \otimes_T \id_{A^\op}) \otimes Z$ is naturally $3$-isomorphic to the $A\otimes_T A^\op$ bimodule $X\otimes_T \id_A \otimes_T \id_{A^\op}$, where the actions and inner products are given similarly. The only subtlety here is that the tensor permutation map $\tau$ given by $\tau(a \otimes_T x \otimes_T b)= x \otimes_T a \otimes_T b$ is well-defined. Well-definedness of this map heavily relies upon the fact that ${_\phi}A_\phi$ has identical central homomorphisms. For, given $f \in C(T)$, we have 

\[
\begin{tikzcd}[row sep=7pt]
a \otimes_T x \otimes_T \phi(f)b \arrow[r, mapsto, "\tau"] \arrow[d,equal]& x \otimes_T a \otimes_T \phi(f)b \arrow[d,equal] \\
a \phi(f) \otimes_T x \otimes_T b \arrow[r,mapsto,"\tau"] \arrow[d,equal]& x \otimes_T a \phi(f) \otimes_T b \arrow[d,equal]\\
a \otimes_T (f \rhd x) \otimes_T b \arrow[r,mapsto,"\tau"] & (f \rhd x) \otimes_T a \otimes_T b
\end{tikzcd}
\] 
    Furthermore, because the inner products of simple tensors in $\id_A \otimes_T X \otimes_T \id_{A^\op}$ and their images under $\tau$ produce the same elements of $A \otimes_T A^\op$, we see that $\tau$ is isometric on the span of simple tensors and therefore extends to all of $\id_A \otimes_T X \otimes_T \id_{A^\op}$ by continuity. Furthermore, we see that $\tau$ is adjointable with $\tau^\dagger=\tau^{-1}$, for
    \begin{align*}
        \langle\tau(a \otimes_T x \otimes_T b) | y \otimes_T c \otimes_T d\rangle&=\langle x \otimes_T a \otimes_T b | y \otimes_T c \otimes_T d\rangle\\
        &=a^\dagger c \langle x | y \rangle \otimes_T db^\dagger\\
        &=\langle a \otimes_T x \otimes_T b | c \otimes_T y \otimes_T d\rangle\\
        &=\langle a \otimes_T x \otimes_T b | \tau^{-1}(y \otimes_T c \otimes_T d)\rangle
    \end{align*}
It is routine to see that $\tau$ is natural; the following diagram clearly commutes for any $3$-morphism $g \colon X \to X'$.
\begin{center}
    \begin{tikzcd}
        \id_A \otimes_T X \otimes_T \id_{A^\op} \arrow[d, "\tau"] \arrow[rrr, "1_A \otimes_T g \otimes_T 1_{A^\op}"] &&& \id_A \otimes_T X' \otimes_T \id_{A^\op} \arrow[d, "\tau'"]\\
        X \otimes_T \id_A \otimes_T \id_{A^\op} \arrow[rrr, "g \otimes_T 1_A \otimes_T 1_{A^\op}"'] &&& X' \otimes_T \id_A \otimes_T \id_{A^\op}
    \end{tikzcd}
\end{center}
Now, choose a $2$-equivalence $Y'\colon A \otimes_T A^\op \Rightarrow {_{\id}}C(T)_{\id}$, and set $Y\coloneq Z \otimes Y'$. We then have natural isomorphisms between the following:
\begin{align*}
Y^{-1} \otimes (\id_A \otimes_T X \otimes_T \id_{A^{\op}}) \otimes Y &\cong (Y')^{-1} \otimes Z^{-1} \otimes (\id_A \otimes_T X \otimes_T \id_{A^{\op}}) \otimes Z \otimes Y'\\
&\cong ({Y'})^{-1} \otimes  (X \otimes_T \id_A \otimes_T \id_{A^{\op}})  \otimes Y'\\
&\cong
X \otimes_{T} ((Y')^{-1} \otimes Y')\\ 
&\cong 
X \otimes_{T} C(T)\\
&\cong 
X
\end{align*}
These isomorphisms are all natural because $1$-composition is natural and the only maps used to construct the isomorphisms (other than $\tau$) are unitors, associators, and interchangers, which are all natural as well. Furthermore, $\tau$ is a unitary isomorphism, and all coherence data in $\AbCAlg$ is unitary, so this natural isomorphism is, in fact, unitary.
\end{proof}
Because these $1$-morphisms may be braided, the actions given by morphisms in $H^3_{\tor}(T;\bZ)$ are automatically trivial.
\begin{cor}
    Let ${_\phi}A_\phi$ be in $\pi_1(\AbCAlg, C(T))$. Then the actions of $_\phi A_\phi$ on $\pi_2(\AbCAlg, C(T))$ and $\pi_3(\AbCAlg, C(T))$ are trivial.
\end{cor}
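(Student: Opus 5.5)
The plan is to deduce the corollary directly from the preceding lemma, applied with $A^{-1}$ taken to be $A^{\op}$. First I will justify this choice. By Lemma~\ref{singlecentralmap} and the proof of Theorem~\ref{splitexactseq}, the class of ${_\phi}A_\phi$ lies in the image of $H^3_{\tor}(T;\bZ)$. Its inverse in $\pi_1(\AbCAlg,C(T))$ is represented by ${_\phi}A^{\op}_\phi$, carrying the same central maps, because the Brauer group inverse is $[A^{\op}]$ (Theorem~\ref{brauergroupiso}). Since the action is defined up to equivalence, the choice of representative of $A^{-1}$ and of the $2$-isomorphism $Y$ does not affect the result. So we may use $A^{-1}={_\phi}A^{\op}_\phi$ together with the particular $Y=Z\otimes Y'$ constructed in the lemma.

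For $\pi_2$: given an invertible $X\colon \id_{C(T)}\Rightarrow\id_{C(T)}$, the lemma supplies a (unitary) $3$-isomorphism
\[
A\curvearrowright X = Y^{-1}\otimes(\id_A\otimes_T X\otimes_T\id_{A^{\op}})\otimes Y\cong X.
\]
Hence $[A\curvearrowright X]=[X]$ in $\pi_2$, which is exactly triviality, since $\pi_2$ consists of $3$-isomorphism classes.

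For $\pi_3$: let $f\in C(T)^\times$ be a $3$-automorphism of $\id_{\id_{C(T)}}$, and let $\eta_X$ denote the natural isomorphism from the lemma. Naturality applied to $f\colon X\to X$ with $X=\id_{C(T)}$ gives
\[
\eta_{\id}\circ\tilde f = f\circ\eta_{\id},
\qquad\text{that is,}\qquad
\eta_{\id}\circ\tilde f\circ\eta_{\id}^{-1}=f.
\]
The definition of $A\curvearrowright f$ allows any $3$-isomorphism $g\colon A\curvearrowright\id_{C(T)}\Rrightarrow\id_{C(T)}$, and taking $g=\eta_{\id}$ yields $A\curvearrowright f=f$.

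The step needing care is independence of the choice of $g$. Any other choice has the form $g'=u\circ g$ with $u$ an automorphism of $\id_{C(T)}$, so $u\in C(T)^\times$. Then $g'\circ\tilde f\circ g'^{-1}=u f u^{-1}=f$, because $\End(\id_{C(T)})\cong C(T)$ is commutative. This also confirms that the well-definedness claimed in the definition holds, so the action on $\pi_3$ is trivial.
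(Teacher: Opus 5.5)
Your proposal is correct and follows the paper's route: the corollary is read off directly from the preceding lemma (with $A^{-1}$ represented by ${_\phi}A^{\op}_\phi$). The $\pi_2$ claim is the isomorphism $A\curvearrowright X\cong X$, and the $\pi_3$ claim is naturality of that isomorphism at $f\colon \id_{C(T)}\to\id_{C(T)}$. Your additional check that the choice of $g$ does not matter, because $\End(\id_{C(T)})\cong C(T)$ is commutative, is a useful detail that the paper leaves implicit.
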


\subsection{Actions by Homeomorphisms}
In this subsection we will describe the actions by $1$-morphisms in ${\homeo}(T)$. Because the central $\ast$-isomorphisms differ, these morphisms cannot be braided as we did the morphisms in $H^3_{\tor}(T;\bZ)$. The following lemma describes the image of an invertible bimodule $X$ under this action.
\begin{lem}\label{homeopi2}
    For a $1$-morphism $A\coloneq {_{\phi}}C(T)_{\id}\in \pi_1(\AbCAlg,C(T))$ and $X \in \pi_2(\AbCAlg, C(T))$, we claim that $\id_A \otimes_T X \otimes_T \id_{A^{-1}}$ is isomorphic to the bimodule $\phi(X)$, which has the same underlying vector space $X$ with action $\blacktriangleright$ and inner product $(\cdot | \cdot)$ defined as follows:
    $$f \blacktriangleright x = \phi(f) \triangleright x$$
    $$(x| y) = \phi^{-1}(\langle x | y \rangle)$$
    where $\triangleright $ and $\langle \cdot| \cdot \rangle$ are the original actions and inner product on $X$.
\end{lem}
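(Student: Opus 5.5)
We will build an explicit unitary bimodule isomorphism. The plan is to identify the three-fold relative tensor product with $X$ itself and then read off the induced action and inner product.

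First we fix the inverse. By Proposition~\ref{homeoarithmetic}, $A^{-1}$ can be taken to be ${_{\phi^{-1}}}C(T)_{\id}$. The composite $A\otimes_T {_{\id}}C(T)_{\id}\otimes_T A^{-1}$ is $\ast$-isomorphic to ${_{\id}}C(T)_{\id}$ compatibly with the central maps. Remark~\ref{isomorphismmoduleactions} then tells us that conjugating by $Y$ only changes which $C(T)$ acts, through this $\ast$-isomorphism. So it suffices to understand $\id_A\otimes_T X\otimes_T \id_{A^{-1}}$ as a Hilbert bimodule over $C(T)\otimes_T C(T)\otimes_T C(T)\cong C(T)$, and then transport along the identification.

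Next we define the candidate map
$$\Theta\colon \id_A\otimes_T X\otimes_T \id_{A^{-1}}\to \phi(X),\qquad f\otimes_T x\otimes_T g\mapsto f\triangleright x\triangleleft g,$$
using the untwisted actions on $X$. There is a subtlety in balancing over $T$. In $\id_A$, $C(T)$ acts on the right through $\id$, and $X$ has equal left and right actions, so this balancing is compatible. On the left of $\id_A$, the $C(T)$-action is through $\phi$, which is what produces the twisted action $\blacktriangleright$. We will check three things for $\Theta$:
\begin{itemize}
\item it respects the balancing relations $f\psi(h)\otimes x = f\otimes h\triangleright x$;
\item it is surjective, which is immediate from $1\otimes x\otimes 1\mapsto x$;
\item it intertwines the left action of the outer copy of $C(T)$, acting via $\phi$, with $\blacktriangleright$.
\end{itemize}

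For the inner product we compute on simple tensors with the formula for the relative tensor product inner product:
$$\langle f\otimes x\otimes g\,|\,f'\otimes y\otimes g'\rangle = \langle g\,|\,\langle x\,|\,\langle f|f'\rangle\triangleright y\rangle\triangleleft g'\rangle_{A^{-1}}.$$
After the final identification $A\otimes_T C(T)\otimes_T A^{-1}\cong C(T)$, which unwinds the central map $\phi$ on the outer factors, this becomes $\phi^{-1}(\langle f x g\,|\,f' y g'\rangle)$, that is, $(\Theta(\cdot)\,|\,\Theta(\cdot))$. Hence $\Theta$ is isometric on simple tensors, so it extends by continuity to an isometric surjection. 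An isometric surjective module map between Hilbert modules is unitary, and therefore adjointable. The analogous computation with left inner products handles the left Hilbert structure; alternatively, Lemma~\ref{lem:ImprimitivityBimodOnSpaceOfSections} lets us deduce it from the right inner product.

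The main obstacle is bookkeeping: determining on which side $\phi$ versus $\phi^{-1}$ appears once the $\ast$-isomorphism $A\otimes_T C(T)\otimes_T A^{-1}\cong C(T)$ from Proposition~\ref{homeoarithmetic} is used to re-identify the coefficient algebra. I would settle this first by writing that isomorphism explicitly as $f\otimes h\otimes g\mapsto \phi^{-1}(f)hg$, or $f\,\phi(h)\,\phi(g)$, depending on the convention. I would then verify on simple tensors that the left action becomes $\phi(f)\triangleright x$ and the inner product becomes $\phi^{-1}(\langle x|y\rangle)$, adjusting the normalisation of $\Theta$ if needed.
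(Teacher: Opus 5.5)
Your plan matches the paper's: identify $A\otimes_T C(T)\otimes_T A^{-1}$ with $C(T)$ by a $\ast$-isomorphism respecting the central maps, identify $\id_A\otimes_T X\otimes_T\id_{A^{-1}}$ with $X$ by an explicit linear map, and transport the action and inner product. But the explicit map you propose is not well defined. The bookkeeping you postpone is really the whole content of the lemma.

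The $1$-composite is balanced in two places:
\begin{enumerate}
\item Between $\id_A$ and $X$, through the right central map $\id$ of $A$: $fh\otimes x\otimes g=f\otimes h\triangleright x\otimes g$. You check this one.
\item Between $X$ and $\id_{A^{-1}}$, through the \emph{left} central map $\phi^{-1}$ of $A^{-1}={_{\phi^{-1}}}C(T)_{\id}$: $f\otimes x\triangleleft h\otimes g=f\otimes x\otimes\phi^{-1}(h)g$. You never check this one.
\end{enumerate}
Your $\Theta(f\otimes x\otimes g)=f\triangleright x\triangleleft g$ sends the two sides of the second relation to $fhg\triangleright x$ and $f\phi^{-1}(h)g\triangleright x$. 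These differ as soon as $\phi\neq\id$, because the action on the full module $X$ is faithful. For the same reason, neither of your candidate algebra isomorphisms, $f\otimes h\otimes g\mapsto\phi^{-1}(f)hg$ or $f\otimes h\otimes g\mapsto f\phi(h)\phi(g)$, is compatible with the balancing relations. So this cannot be repaired by a normalisation at the end.

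The balancing forces the fix. The $\ast$-isomorphism compatible with both balancings and with the central maps is $\sigma(f\otimes h\otimes g)=\phi^{-1}(fh)g$. The identification with $X$ is $\Sigma(f\otimes x\otimes g)=f\phi(g)\triangleright x$, which satisfies both relations and has inverse $x\mapsto 1\otimes x\otimes 1$. Since $\sigma^{-1}(f)=1\otimes\phi(f)\otimes 1$, the transported action is $f\blacktriangleright x=\phi(f)\triangleright x$.

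For the inner product, use the one the $1$-composite carries in $\AbCAlg$: the elementary tensor $\langle f|f'\rangle\otimes\langle x|y\rangle\otimes\langle g|g'\rangle\in A\otimes_T C(T)\otimes_T A^{-1}$, not the nested formula you wrote. Applying $\sigma$ gives $\phi^{-1}(\bar f f'\langle x|y\rangle)\,\bar g g'=\phi^{-1}(\langle f\phi(g)x\,|\,f'\phi(g')y\rangle)$, so $\Sigma$ preserves inner products. With your $\Theta$, the value you claim is $\phi^{-1}(\langle fxg|f'yg'\rangle)=\phi^{-1}(\bar f f'\langle x|y\rangle)\,\phi^{-1}(\bar g g')$, which differs by the factor $\phi^{-1}(\bar g g')$ in place of $\bar g g'$. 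So your isometry claim is false as stated. With $\Sigma$ in place of $\Theta$, your argument becomes the paper's proof.
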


\begin{proof}
    First, recall that $A^{-1}={_{\phi^{-1}}}C(T)_{\id}$. Furthermore, remember that $\id_A$ is $C(T)$ as a $C(T)$-$C(T)$ with the usual inner products and actions. Now, observe that
    $$A\otimes_T C(T) \otimes_T A ^{-1}\cong C(T)$$
    via the isomorphism 
    $$\sigma\colon f \otimes g \otimes h \mapsto \phi^{-1}(fg)h .$$
    Furthermore, this isomorphism respects the central $\ast$-homomorphisms from $C(T)$. For, given $f \in C(T)$,
    $$\sigma (\phi \otimes_T 1_{C(T)} \otimes_T 1_{A^{-1}})(f)= \sigma(\phi(f) \otimes_T 1_{C(T)} \otimes_T 1_{A^{-1}}) = f$$
    and
    $$\sigma (1_A \otimes_T 1_{C(T)} \otimes_T {\id})(f)=\sigma (1_A \otimes_T 1_{C(T)} \otimes_T f)=f,$$
    and so $\sigma$ is a $\ast$-isomorphism from $A \otimes_T C(T) \otimes_T A^{-1}$ to ${_{\id}}C(T)_{\id}$ that respects the central maps. Thus, the corresponding imprimitivity bimodule $Y$ will be a $2$-isomorphism in $\AbCAlg$ by Lemma~\ref{equivariantstariso}. By Remark~\ref{isomorphismmoduleactions}, we have
    \begin{align*}
        A \curvearrowright X &= Y^{-1}\otimes (\id_A \otimes_T X \otimes_T \id_{A^{-1}}) \otimes Y\\
        &\cong \id_A \otimes_T X \otimes_T \id_{A^{-1}}
    \end{align*}
    where the resulting bimodule $\id_A \otimes_T X \otimes_T \id_{A^{-1}}$ as a $C(T)$-$C(T)$ bimodule with actions defined using $\sigma$. Furthermore, it is clear that there is a vector space isomorphism $\Sigma \colon \id_{A} \otimes_T X \otimes_T \id_{A^{-1}} \to X$ given by $\Sigma(f \otimes_T x \otimes_T g)=f\phi(g)\triangleright x$. Thus, we define $\phi(X)$ to have underlying vector space $X$ given by this isomorphism. We use the maps $\sigma$ and $\Sigma$ to determine the $C(T)$-action on $\phi(X)$ as follows:
    \begin{align*}
        f \blacktriangleright x& = \Sigma( \sigma^{-1}(f) \triangleright \Sigma^{-1}(x))\\
        &= \Sigma((1_A \otimes_T \phi(f) \otimes_T 1_{A^{-1}}) \triangleright (1_A \otimes_T x \otimes_T 1_{A^{-1}}))\\
        &= \Sigma(1_A \otimes_T (\phi(f)\triangleright x) \otimes_T 1_{A^{-1}})\\
        &= \phi(f)\triangleright x
    \end{align*}
    We define the inner product in a similar manner:
    \begin{align*}
        ( x | y) &= \sigma(\langle \Sigma^{-1}(x)| \Sigma^{-1}(y)\rangle)\\
        &= \sigma(\langle1_A \otimes_T x \otimes_T 1_{A^{-1}}| 1_A \otimes_T y \otimes_T 1_{A^{-1}}\rangle)\\
        &= \sigma(\langle 1_A | 1_A \rangle \otimes_T \langle x | y \rangle \otimes_T \langle 1_{A^{-1}}| 1_{A^{-1}}\rangle )\\
        &= \phi^{-1}(\langle x | y \rangle)
    \end{align*}
    With these definitions, $\Sigma \colon (A \curvearrowright X) \to \phi(X)$ is manifestly a $3$-isomorphism in $\AbCAlg$.
\end{proof}

Having computed how $1$-morphisms in $H^3_{\tor}(T;\bZ)$ and ${\homeo}(T)$ individually act on $\pi_2(\AbCAlg, C(T))$, we can now describe how a general $1$-morphism acts.

\begin{cor}\label{1moractiononpi2}
    Let $_\phi A_\psi$ belong to $\pi_1(\AbCAlg, C(T))$ and $X$ to $\pi_2(\AbCAlg, C(T))$. Then $A \curvearrowright X$ is the bimodule $\psi^{-1}\phi(X)$ given by the same underlying vector space $X$ with actions and inner product as follows:
    $$f \blacktriangleright x = (\psi^{-1}\circ \phi)\triangleright x$$
    $$(x| y ) = (\phi^{-1} \circ \psi)(\langle x | y \rangle).$$
\end{cor}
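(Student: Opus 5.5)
We reduce to the two special cases already handled, using Proposition~\ref{prop:splitoffcenter} and the fact that the action of $\pi_1$ on $\pi_2$ is a group action.

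\textbf{Step 1 (splitting).} By Proposition~\ref{prop:splitoffcenter}, ${_\phi}A_\psi \cong {_{\psi^{-1}\circ\phi}}C(T)_{\id} \otimes_T {_\psi}A_\psi$ in $\pi_1(\AbCAlg,C(T))$. The action is well defined on equivalence classes. It is also multiplicative, $(B\otimes_T C)\curvearrowright X \cong B\curvearrowright(C\curvearrowright X)$: the $2$-isomorphism for $B\otimes_T C$ may be taken to be the composite of those for $C$ and for $B$, with associators in between. Hence
$$A\curvearrowright X \cong {_{\psi^{-1}\circ\phi}}C(T)_{\id}\curvearrowright\bigl({_\psi}A_\psi\curvearrowright X\bigr).$$

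\textbf{Step 2 (the two factors).} The Corollary following the braiding lemma says that ${_\psi}A_\psi$ acts trivially up to $3$-isomorphism, so ${_\psi}A_\psi\curvearrowright X\cong X$. Lemma~\ref{homeopi2}, applied with $\phi$ replaced by the automorphism $\psi^{-1}\circ\phi$ of $C(T)$, then gives
$${_{\psi^{-1}\circ\phi}}C(T)_{\id}\curvearrowright X\cong (\psi^{-1}\circ\phi)(X).$$
This is the module with underlying space $X$, action $f\blacktriangleright x=(\psi^{-1}\circ\phi)(f)\triangleright x$, and inner product $(x|y)=(\psi^{-1}\circ\phi)^{-1}(\langle x|y\rangle)=(\phi^{-1}\circ\psi)(\langle x|y\rangle)$. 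These are exactly the formulas in the statement.

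\textbf{Main obstacle.} The delicate point is the multiplicativity in Step 1. The definition of $A\curvearrowright X$ requires a chosen $2$-isomorphism $Y$, and we must check that the composite choice gives a module $3$-isomorphic to the iterated action. Concretely, we would build the $2$-isomorphism for $B\otimes_T C$ as
$$(\id_B\otimes_T Y_C\otimes_T\id_{B^{-1}})\otimes Y_B,$$
using $(B\otimes_T C)^{-1}\cong C^{-1}\otimes_T B^{-1}$. Naturality of the associators and interchangers in the $3$-category then identifies the two sides, as in the proof of the braiding lemma.

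An alternative that avoids this issue is a direct computation in the spirit of Lemma~\ref{homeopi2}. Using the $\ast$-isomorphism in Proposition~\ref{prop:splitoffcenter} and Lemma~\ref{equivariantstariso}, we may assume $A\otimes_T C(T)\otimes_T A^{-1}$ is identified with $C(T)$. We then track the action and inner product of $1_A\otimes_T x\otimes_T 1_{A^{-1}}$ through Remark~\ref{isomorphismmoduleactions}. We would present the factorization argument and use this computation as a consistency check.
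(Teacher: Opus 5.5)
Your proposal is correct and follows the paper's proof: you split ${_\phi}A_\psi$ via Proposition~\ref{prop:splitoffcenter}, let ${_\psi}A_\psi$ act trivially by the braiding corollary, and apply Lemma~\ref{homeopi2} to the automorphism $\psi^{-1}\circ\phi$. You also spell out why the action is multiplicative, $(B\otimes_T C)\curvearrowright X\cong B\curvearrowright(C\curvearrowright X)$, which is a step the paper leaves implicit under ``by our previous work.''
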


\begin{proof}
    By Proposition~\ref{prop:splitoffcenter}, $_\phi A_\psi$ is equivalent to $_{\psi^{-1}\phi} C(T)_{\id} \otimes_T {_\psi}A_\psi$ in the first homotopy group $\pi_1(\AbCAlg, C(T))$. By our previous work, we have $$_\phi A_\psi \curvearrowright X \cong {_{\psi^{-1}\phi}}C(T)_{\id} \curvearrowright (_{\psi}A_\psi \curvearrowright X) \cong {_{\psi^{-1}\phi}}C(T)_{\id} \curvearrowright X\cong \psi^{-1}\phi(X)$$
    as desired.
\end{proof}

We would also like to describe the actions on $\pi_2$ in terms of line bundles and, in particular, in terms of the first Chern class in $H^2(T;\bZ)$. The following theorem describes this relationship.
\begin{thm}
    Let $(\omega, \Phi) \in H^3_{\tor}(T;\bZ) \rtimes {\homeo}(T)$, and let $E$ be a line bundle over $T$. Then $(\omega, \Phi) \curvearrowright E$ is the pullback bundle $(\Phi^{-1})^*(E)$. Furthermore, when $T$ has the homotopy type of a CW-complex, the action of $(\omega, \Phi)$ on $H^2(T;\bZ)$ corresponds to the pullback along $\Phi^{-1}$.
\end{thm}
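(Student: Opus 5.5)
We reduce to the description of the action in Corollary~\ref{1moractiononpi2}, which gives the answer for bimodules, and then transport it along the monoidal Serre--Swan equivalence (Corollary~\ref{cor:Serre-Swan}). Under the splitting of Theorem~\ref{splitexactseq}, the pair $(\omega,\Phi)$ is represented by a $1$-morphism ${_\phi}A_\psi$ with $\Lambda(A)=\widehat{\psi^{-1}\circ\phi}=\Phi$, so that $\psi^{-1}\circ\phi=\Phi^*$. The $H^3_{\tor}$ component contributes nothing, since by the corollary on actions by cohomology every ${_\phi}A_\phi$ acts trivially. By Corollary~\ref{1moractiononpi2}, $(\omega,\Phi)\curvearrowright \Gamma(E)$ is therefore the bimodule $\Phi^*(\Gamma(E))$. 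This has the same underlying space $\Gamma(E)$, the action $f\blacktriangleright s = (f\circ\Phi)\,s$, and the inner product $(s|t)=(\Phi^{-1})^*\langle s|t\rangle=\langle s|t\rangle\circ\Phi^{-1}$. Note that the twisted action and the inverse-twisted inner product are consistent, because $(s\triangleleft f | t)=(s|t)\cdot(f\circ\Phi)\circ\Phi^{-1}$.

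Next we identify this twisted module with sections of $(\Phi^{-1})^*E$. Write $E'=(\Phi^{-1})^*E=\{(t,e)\in T\times E : p(e)=\Phi^{-1}(t)\}$, and give it the pulled-back Hermitian metric $\langle (t,e)|(t,e')\rangle=\langle e|e'\rangle_E$. Define
$$\Theta\colon \Phi^*(\Gamma(E))\to \Gamma(E'),\qquad \Theta(s)(t)=\bigl(t,\,s(\Phi^{-1}(t))\bigr).$$
This map is a linear bijection; its inverse sends a section $u$ to $t\mapsto \mathrm{pr}_2(u(\Phi(t)))$. We then verify three things. First, $\Theta(f\blacktriangleright s)(t)=f(\Phi(\Phi^{-1}t))\,s(\Phi^{-1}t)=f(t)\,\Theta(s)(t)$, so $\Theta$ intertwines the actions. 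Second, $\langle\Theta s|\Theta s'\rangle(t)=\langle s|s'\rangle(\Phi^{-1}t)=(s|s')(t)$, so it preserves inner products. Third, both inner products are determined by the right one (the left inner product is its conjugate by Lemma~\ref{lem:ImprimitivityBimodOnSpaceOfSections}), so $\Theta$ is a unitary $3$-isomorphism. Since Lemma~\ref{lem:HermitianStructuresOnBundleAreIsomorphic} shows that the choice of metric is irrelevant, passing to $\operatorname{Pic}(T)$ through Corollary~\ref{cor:Serre-Swan} yields $(\omega,\Phi)\curvearrowright[E]=[(\Phi^{-1})^*E]$.

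For the CW case, we use naturality of the first Chern class: $c_1((\Phi^{-1})^*E)=(\Phi^{-1})^*c_1(E)$. Combined with the isomorphism $c_1\colon\operatorname{Pic}(T)\to H^2(T;\bZ)$, this shows the action becomes pullback along $\Phi^{-1}$.

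The main obstacle is bookkeeping rather than any deep step. We must check that the conventions match: the direction of $\Phi^*$ versus $\Phi^{-1}$ in Corollary~\ref{1moractiononpi2} and in the definition of $\Lambda$, and the fact that the semidirect-product representative of $(\omega,\Phi)$ really has $\psi^{-1}\phi=\Phi^*$ and not its inverse. If a sign or inverse goes astray, we would recheck against the explicit example ${_{\Phi^*}}C(T)_{\id}$ from Lemma~\ref{homeopi2}, whose action is $\Phi^*(X)$ on the nose.
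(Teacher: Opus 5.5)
Your proposal is correct and follows the paper's proof essentially step for step. You reduce via Corollary~\ref{1moractiononpi2} to the twisted module $\Phi^*(\Gamma(E))$, identify it with $\Gamma((\Phi^{-1})^*E)$ by precomposing sections with $\Phi^{-1}$, and use naturality of $c_1$ for the CW case. The only difference is that you also check that this map carries the twisted inner product $\langle s|t\rangle\circ\Phi^{-1}$ to the pulled-back metric, so the identification is unitary. The paper checks only that it intertwines the $C(T)$-actions.
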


\begin{proof}
    By the previous theorem, we know that $(\omega, \Phi) \curvearrowright \Gamma(E) \cong \Phi^*(\Gamma(E))$, where the $C(T)$-action has been twisted by $\Phi^*$ (that is, by precomposition with $\Phi$). Define the pullback bundle $(\Phi^{-1})^*(E)$ to have total space
    $$(\Phi^{-1})^*(E)\coloneq \{(t, e) \in T \times E: \Phi^{-1}(t)=p(e)\} \subseteq T \times E$$
    with projection map $p_{\Phi}(t,e) = t = (\Phi \circ p)(e).$ We see that precomposition with $\Phi^{-1}$ defines a group isomorphism from $\Phi^*(\Gamma(E))$ to $\Gamma((\Phi^{-1})^*(E))$ by sending a continuous section $g \in \Gamma(E)$ to $(g \circ \Phi^{-1})(t) = (t, (g \circ \Phi^{-1})(t))$. To verify this, note this does produce a section in $\Gamma((\Phi^{-1})^*E)$; for, if $g \in \Gamma(E)$, we have
    $$(\Phi \circ p) \circ (g \circ \Phi^{-1})=\Phi \circ \id_T \circ \Phi^{-1} = \id_T.$$
    Also, this map has an inverse sending $h \in \Gamma((\Phi^{-1})^*(E))$ to $h \circ \Phi$ (with a mild abuse of notation by identifying $E$ with $\id^*(E)$). Note that $(\Phi^{-1})^*\colon \Phi^*(\Gamma(E))\to \Gamma(\Phi^{-1})^*(E))$ also intertwines the $C(T)$-module actions; for any $f \in C(T)$, we have
    $$({\Phi^{-1}})^*(f \blacktriangleright g) = (\Phi^{-1})^*((f \circ \Phi)g)= f (g \circ \Phi^{-1})= f \triangleright (\Phi^{-1})^*(g).$$
    We conclude that $(\Phi^{-1})^*$ is an isomorphism of $C(T)$-modules, and it follows that $(\omega, \Phi)\curvearrowright E \cong (\Phi^{-1})^*(E)$.
    \par Furthermore, in the case where $T$ has the homotopy type of a CW-complex, we know that line bundles are classified by their first Chern class $c_1(E) \in H^2(T;\bZ)$. We then have that $(\omega, \Phi) \curvearrowright c_1(E)= (\Phi^{-1})^* c_1(E)$, the pullback of the first Chern class (as the pullback of line bundles corresponds to the pullback of the Chern classes). 
\end{proof}

We are now ready to analyze the action of $1$-morphisms in ${\homeo}(T)$ on the third homotopy group $\pi_3(\AbCAlg, C(T))$. As we can concretely describe the image of the identity bimodule $\id_{C(T)}$ under this action, we essentially trace how a morphism in $\pi_3$ acts through these isomorphisms. 

\begin{thm}
    Let $f \in \pi_3(\AbCAlg, T)$ and $A\coloneq {_\phi} C(T)_{\id} \in \pi_1(\AbCAlg, T)$. Then $A\curvearrowright f = \phi^{-1}(f)$.
\end{thm}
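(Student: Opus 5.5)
We follow the same route as Lemma~\ref{homeopi2}, now for $X=\id_{C(T)}={}_{C(T)}C(T)_{C(T)}$ and a $3$-endomorphism $f$ of it. By $\pi_3(\AbCAlg,T)\cong C(T)^\times$, we identify $f$ with multiplication by a function $f\in C(T)^\times$. We compute the three maps in the definition $A\curvearrowright f=g\circ\tilde f\circ g^{-1}$ explicitly.

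First, take $Y$ to be the $2$-isomorphism induced, via Lemma~\ref{equivariantstariso}, by the $\ast$-isomorphism $\sigma\colon f_1\otimes g\otimes h\mapsto \phi^{-1}(f_1g)h$ from the proof of Lemma~\ref{homeopi2}. By Remark~\ref{isomorphismmoduleactions}, conjugating by $Y$ does not change underlying spaces or maps. So $\tilde f$ is just $1_A\otimes_T f\otimes_T 1_{A^{-1}}$, acting on $\id_A\otimes_T C(T)\otimes_T\id_{A^{-1}}$. This bimodule carries the $C(T)$-actions transported along $\sigma$.

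Second, for $g$ we use the map $\Sigma$ of Lemma~\ref{homeopi2}, $\Sigma(f_1\otimes_T x\otimes_T f_2)=f_1\phi(f_2)x$. It identifies $A\curvearrowright\id_{C(T)}$ with $\phi(C(T))$: the space $C(T)$ with action $h\blacktriangleright x=\phi(h)x$ and inner product $(x|y)=\phi^{-1}(\bar xy)$. This is not literally $\id_{C(T)}$, so we compose with $\phi^{-1}\colon\phi(C(T))\to{}_{C(T)}C(T)_{C(T)}$. We check that $\phi^{-1}$ is a bimodule map: $\phi^{-1}(h\blacktriangleright x)=\phi^{-1}(\phi(h)x)=h\,\phi^{-1}(x)$. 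We also check that it is unitary: $\langle\phi^{-1}x|\phi^{-1}y\rangle=\phi^{-1}(\bar x y)=(x|y)$. Hence $g\coloneq\phi^{-1}\circ\Sigma$ is a unitary $3$-isomorphism $A\curvearrowright\id_{C(T)}\Rrightarrow\id_{C(T)}$.

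Third, we compute on simple tensors:
\[
g\bigl(\tilde f(f_1\otimes x\otimes f_2)\bigr)=\phi^{-1}\bigl(f_1\phi(f_2)fx\bigr)=\phi^{-1}(f)\,\phi^{-1}\bigl(f_1\phi(f_2)x\bigr)=\phi^{-1}(f)\,g(f_1\otimes x\otimes f_2),
\]
using that $\phi^{-1}$ is multiplicative. Thus $g\circ\tilde f\circ g^{-1}$ is multiplication by $\phi^{-1}(f)$ on $C(T)$. The choices of $Y$ and $g$ do not matter: any two choices of $g$ differ by an automorphism of $\id_{C(T)}$, which is multiplication by an element of the commutative algebra $C(T)^\times$, so conjugation by it is trivial.

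The main obstacle is the bookkeeping in the second step: making sure the identification $\Sigma$ lands in the twisted bimodule $\phi(C(T))$ rather than in $\id_{C(T)}$, and that the correction map is $\phi^{-1}$ and not $\phi$. With $\phi=\Phi^*$, this gives $\phi^{-1}(f)=f\circ\Phi^{-1}$, in line with Theorem~\ref{thmB} and with the $\Phi^{-1}$ appearing in the action on $\pi_2$.
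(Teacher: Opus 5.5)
Your proof is correct and is essentially the paper's argument: you take $Y$ induced by $\sigma$, use $\Sigma$ to identify $A\curvearrowright\id_{C(T)}$ with the twisted bimodule $\phi(\id_{C(T)})$, and correct by the unitary $\phi^{-1}$ (the paper's $\eta$), which yields multiplication by $\phi^{-1}(f)$. The only differences are cosmetic: you fold $\Sigma$ and $\eta$ into a single $3$-isomorphism $g=\phi^{-1}\circ\Sigma$, and you add the valid observation that the choice of $g$ is irrelevant because $C(T)^\times$ is commutative.
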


\begin{proof}
   By definition, we have $A \curvearrowright f = \id_{Y^{-1}} \otimes (1_A \otimes_T f \otimes_T 1_{A^{-1}}) \otimes \id_{Y} $. However, because we can choose $Y$ to be implemented by an isomorphism $\sigma$ as in the proof of Lemma~\ref{homeopi2}, we can determine how $A \curvearrowright f$ acts on our representative $\phi(\id_{C(T)})$ by composing $\Sigma \circ (1_A \otimes_T f \otimes_T 1_{A^{-1}}) \circ \Sigma^{-1}$ (where $\Sigma$ also comes from the proof of Lemma~\ref{homeopi2}). We then see that
   \begin{align*}
       (\Sigma \circ (1_A \otimes_T f \otimes_T 1_{A^{-1}}) \circ \Sigma^{-1})(g) &= \Sigma(1_A \otimes_T f \otimes_T 1_{A^{-1}})(1_A \otimes_T g \otimes_T 1_{A^{-1}})\\
       &= \Sigma(1_A \otimes_T fg \otimes_T 1_{A^{-1}})\\
       &= fg.
   \end{align*}
   Thus, $A \curvearrowright f$ acts on $\phi(\id_{C(T)})$ by multiplying by $f$. However, $\phi(\id_{C(T)})$ is not explicitly equal to $\id_{C(T)}$, which is the $2$-morphism on which the elements of $\pi_3$ act. Therefore, we need to use an appropriate isomorphism $\phi(\id_{C(T)}) \Rrightarrow \id_{C(T)}$. Now, consider the map $\eta \colon \phi(\id_{C(T)}) \to \id_{C(T)}$ given by
   $$\eta(g)=\phi^{-1}(g).$$
   This map is clearly invertible with $\eta^\dagger=\eta^{-1}$, for
   $$\langle \eta(g) | h \rangle = \phi^{-1}(\clo{g})h=\phi^{-1}(\clo{g} \phi(h))=(g | \phi(h))=(g | \eta^{-1}(h)).$$
   So this is a $3$-equivalence from $\phi(\id_{C(T)})$ to $\id_{C(T)}$. To see how $A \curvearrowright f$ acts on $\id_{C(T)}$, we compute (for $g \in \id_{C(T)}$):
   \begin{align*}
        (\eta \circ (A \curvearrowright f) \circ \eta^{-1})(g)&= (\eta \circ (A \curvearrowright f))(\phi(g))\\
        &= \eta ( f\phi(g))\\
        &= \phi^{-1}(f)g.
   \end{align*}
   Thus, we see that $A \curvearrowright f$ is equal to $\phi^{-1}(f)$ in $\pi_3$.
\end{proof}

As was the case with $\pi_2$, we can also describe the action of a general invertible $1$-morphism on $\pi_3$.

\begin{cor}\label{generalpi3action}
    Given ${_\phi}A_\psi \in \pi_1(\AbCAlg,C(T))$ and $f \in \pi_3(\AbCAlg,C(T))$, we have that ${_\phi}A_\psi \curvearrowright f= (\psi^{-1}\circ \phi)(f)$.
\end{cor}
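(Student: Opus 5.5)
The plan mirrors the proof of Corollary~\ref{1moractiononpi2}: reduce a general $1$-morphism to a product of a ``cohomological'' piece and a ``homeomorphism'' piece, then combine the two computations already done. By Proposition~\ref{prop:splitoffcenter}, ${_\phi}A_\psi$ is equivalent in $\pi_1(\AbCAlg,C(T))$ to the composite
\[
{_{\psi^{-1}\phi}}C(T)_{\id}\otimes_T {_\psi}A_\psi .
\]
The first step is to check that the action of $\pi_1$ on $\pi_3$ is genuinely an action, so that it respects $1$-composition and depends only on the equivalence class. Concretely, we need
\[
(B\otimes_T B')\curvearrowright f = B\curvearrowright(B'\curvearrowright f).
\]
I would argue this directly from the definition. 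Take the $2$-isomorphism $Y$ for $B\otimes_T B'$ to be the composite built from the $Y$'s for $B'$ and for $B$, whiskered by $\id_B$ and $\id_{B^{-1}}$. Then $1_{B\otimes B'}\otimes_T f\otimes_T 1_{(B\otimes B')^{-1}}$ agrees, up to associators and interchangers, with $1_B\otimes_T(1_{B'}\otimes_T f\otimes_T 1_{B'^{-1}})\otimes_T 1_{B^{-1}}$. Similarly, the chosen $3$-isomorphisms $g$ compose. Since $\pi_3 = C(T)^\times$ is abelian and every conjugation by a $3$-isomorphism is well-defined up to this coherence data, the resulting element of $\pi_3$ is the same.

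With this in hand, the computation is two lines.
\begin{itemize}
\item By the corollary on actions by cohomology classes, ${_\psi}A_\psi$ acts trivially on $\pi_3$, so ${_\psi}A_\psi\curvearrowright f=f$.
\item By the preceding theorem applied to the homeomorphism piece ${_{\psi^{-1}\phi}}C(T)_{\id}$, we get ${_{\psi^{-1}\phi}}C(T)_{\id}\curvearrowright f=(\psi^{-1}\phi)^{-1}(f)$.
\end{itemize}

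At this point I would pause over the convention. The stated corollary claims $(\psi^{-1}\circ\phi)(f)$, whereas the literal formula gives the inverse. I would expect to reconcile this as in Corollary~\ref{1moractiononpi2}, where the same reindexing convention produced $\psi^{-1}\phi$. The theorem's $\phi^{-1}(f)$ is read with the $\phi$ on the left leg of ${_\phi}C(T)_{\id}$, and the correspondence $\Phi\mapsto\Phi^*$ reverses direction. So I would carefully track which automorphism labels the left central map and verify that the final expression matches the stated $(\psi^{-1}\circ\phi)(f)$ under the paper's identifications.

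The main obstacle is the bookkeeping in the first step, namely showing that the action is compatible with $1$-composition and with replacing ${_\phi}A_\psi$ by an equivalent $1$-morphism. I would first try to sidestep the general coherence argument. Using Lemma~\ref{equivariantstariso} and Remark~\ref{isomorphismmoduleactions}, every $2$-isomorphism involved can be taken to be induced by explicit $\ast$-isomorphisms, namely $a\mapsto 1\otimes_T a$ and the maps $\sigma$, $\Sigma$ from Lemma~\ref{homeopi2}. Then $A\curvearrowright f$ can be computed as conjugation of multiplication by $f$ through these explicit maps, and the identity becomes a direct computation on simple tensors.
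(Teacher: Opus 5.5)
Your decomposition and ingredients are exactly the paper's. You split ${_\phi}A_\psi\cong{_{\psi^{-1}\circ\phi}}C(T)_{\id}\otimes_T{_\psi}A_\psi$ by Proposition~\ref{prop:splitoffcenter}, drop ${_\psi}A_\psi$ by the triviality of the cohomological actions, and apply the preceding theorem to the homeomorphism factor; your check that the action respects $1$-composition is something the paper uses without comment. Your literal computation is also right: it yields $(\psi^{-1}\circ\phi)^{-1}(f)=(\phi^{-1}\circ\psi)(f)$, which is precisely the last line of the paper's own proof.

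What fails is your planned reconciliation with the displayed $(\psi^{-1}\circ\phi)(f)$; no bookkeeping of conventions can achieve it. Both expressions are automorphisms of $C(T)$ evaluated on $f$, so the contravariance of $\Phi\mapsto\Phi^*$ never enters. They are genuinely different elements of $C(T)^\times$ in general. Take $\phi=\Phi^*$, $\psi=\id$, with $\Phi$ the rotation of $S^1$ by $2\pi/3$ and $f(z)=2+z$. The preceding theorem gives $\phi^{-1}(f)=f\circ\Phi^{-1}$, in agreement with Theorem~\ref{thmB}; recall $\Lambda({_\phi}A_\psi)=\widehat{\psi^{-1}\circ\phi}$, i.e.\ $\Phi^*=\psi^{-1}\circ\phi$. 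The displayed formula instead gives $f\circ\Phi\neq f\circ\Phi^{-1}$.

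Your own first step also excludes the displayed formula. Write $\theta_A\coloneqq\psi^{-1}\circ\phi$; the computation in the proof of Theorem~\ref{splitexactseq} gives $\theta_{A\otimes_T B}=\theta_B\circ\theta_A$. So $f\mapsto\theta_A(f)$ would be a right action, incompatible with $(A\otimes_T B)\curvearrowright f=A\curvearrowright(B\curvearrowright f)$ whenever $\theta_A$ and $\theta_B$ fail to commute. By contrast, $f\mapsto\theta_A^{-1}(f)$ is compatible with it.

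The analogy with Corollary~\ref{1moractiononpi2} does not help. There, $\psi^{-1}\phi(X)$ is only the name of a twisted bimodule in the notation of Lemma~\ref{homeopi2}, not an automorphism evaluated on elements. So you should conclude ${_\phi}A_\psi\curvearrowright f=(\phi^{-1}\circ\psi)(f)=f\circ\Phi^{-1}$, and flag the displayed statement as having the composite inverted.
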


\begin{proof}
    As in the proof of Corollary~\ref{1moractiononpi2}, we use Proposition~\ref{prop:splitoffcenter} to write $_\phi A _\psi \cong {_{\psi^{-1}\circ \phi}}C(T)_{\id}\otimes_T {_\psi}A_\psi$. Therefore,
    \begin{align*}
        {_\phi}A_\psi \curvearrowright f &= ({_{\psi^{-1}\circ \phi}}C(T)_{\id} \otimes_T {_\psi}A_\psi) \curvearrowright f\\
        &= {_{\psi^{-1}\circ \phi}}C(T)_{\id} \curvearrowright f\\
        &= (\phi^{-1}\circ \psi)(f)\qedhere
    \end{align*}
\end{proof}
To give a characterization in terms of topological data, the next corollary immediately follows from Corollary~\ref{generalpi3action}.
\begin{cor}
    Given $(\omega, \Phi) \in H^3_{\tor}(T;\bZ)\rtimes {\homeo}(T)$ and $f \in \pi_3(\AbCAlg, T)$, we have $(\omega, \Phi) \curvearrowright f = f \circ \Phi^{-1}$.
\end{cor}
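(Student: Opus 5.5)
The plan is to read this off from Corollary~\ref{generalpi3action} by translating the pair $(\omega,\Phi)$ into an explicit $1$-morphism ${_\phi}A_\psi$. Under the split exact sequence of Theorem~\ref{splitexactseq}, $(\omega,\Phi)$ is represented by the $1$-composite ${_{\id}}A^\omega_{\id}\otimes_T {_{\Phi^*}}C(T)_{\id}$, possibly with the factors in the other order depending on the semidirect product convention. Since the action of a composite is the composite of the actions (both actions are defined by whiskering and conjugating), the order will not matter for the final answer.

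First, by the Corollary in the subsection on actions by cohomology, the $H^3_{\tor}$ factor ${_\phi}A^\omega_\phi$ acts trivially on $\pi_3$. This is also visible from Corollary~\ref{generalpi3action}, since there $\psi^{-1}\circ\phi=\id$. So the whole action reduces to that of ${_{\Phi^*}}C(T)_{\id}$.

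Second, apply Corollary~\ref{generalpi3action}, or directly the preceding theorem with $\phi=\Phi^*$ and $\psi=\id$. This gives ${_{\Phi^*}}C(T)_{\id}\curvearrowright f=(\Phi^*)^{-1}(f)$.

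Third, by Gelfand duality $(\Phi^*)^{-1}=(\Phi^{-1})^*$, which is precomposition with $\Phi^{-1}$. Hence $(\omega,\Phi)\curvearrowright f=f\circ\Phi^{-1}$.

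The only real obstacle is bookkeeping of conventions. Corollary~\ref{generalpi3action} states $(\psi^{-1}\circ\phi)(f)$ in its statement, but its proof concludes with $(\phi^{-1}\circ\psi)(f)$. The preceding theorem, which is computed directly, gives $\phi^{-1}(f)$ for ${_\phi}C(T)_{\id}$. I would trust that direct computation: with $\phi=\Phi^*$ it yields $(\Phi^*)^{-1}f=f\circ\Phi^{-1}$.

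This also agrees with the $\pi_2$ result $(\Phi^{-1})^*(E)$, as consistency requires, since $\pi_3$ is the automorphism group of the unit $2$-morphism. The action on $\pi_3$ must therefore match the action on the trivial bundle's automorphisms (sections of $T\times\bC^\times$), which transform by pullback along $\Phi^{-1}$.
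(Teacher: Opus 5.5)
Your proposal is correct and follows the paper's route: the paper derives this corollary in one line from Corollary~\ref{generalpi3action}, whose proof is exactly your reduction (split off ${_{\psi^{-1}\circ\phi}}C(T)_{\id}$, let the cohomology factor act trivially, apply the preceding theorem), followed by $(\Phi^*)^{-1}f = f\circ\Phi^{-1}$. You are also right about the convention issue: the formula in the statement of Corollary~\ref{generalpi3action} should read $(\phi^{-1}\circ\psi)(f)$, as its proof concludes, and only that version gives the present corollary.
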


\appendix

\section{Serre--Swan duality}\label{Sec:Serre-Swan}
Swan's theorem \cite[p.267]{S62} states the construction $\Gamma$ taking a vector bundle $E$ to its space of sections $\Gamma(E)$ is an equivalence of suitable categories. Each category carries a monoidal product -- the tensor product of bundles and the relative tensor product of modules, respectively -- and so we will give a proof that the equivalence guaranteed by Swan's theorem is, in fact, a monoidal $\dagger$-equivalence. This result is already known to experts; section 7.5 of \cite{C08} gives a proof of this monoidal equivalence for the smooth version of the Serre--Swan Theorem for differentiable manifolds. Our proof for Swan's theorem is essentially a modified proof of the one found in \cite{C08}.

Returning to vector bundles, notice the rank $n$ trivial bundle $T \times \bC^n \to T$ yields the free $n$-dimensional $C(T)$-module 
$$
\Gamma(T \times \bC^n) = C(T) \otimes_\bC \bC^n.
$$
Of course, every free $C(T)$-module arises in this way, and each map of free modules $C(T) \otimes_\bC \bC^n \to C(T) \otimes_\bC \bC^m$ is uniquely determined by a map $\bC^n \to \bC^m$ which in turn induces a corresponding bundle map $T \times \bC^n \to T \times \bC^m$. In general, every finite rank bundle can be witnessed inside one of these trivial bundles:

\begin{lem}\label{lem:trivialsummand}
Let $E$ be a bundle over $T$. Then there is a bundle $E^\perp$ with the property that $E \oplus E^\perp$ is a trivial bundle over $T$.
\end{lem}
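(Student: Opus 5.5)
The plan is the standard argument: embed $E$ in a trivial bundle using finitely many local trivializations, then take the orthogonal complement. Since $T$ is compact, choose a finite open cover $U_1,\dots,U_k$ of $T$ with local trivializations $h_i\colon p^{-1}(U_i)\to U_i\times\bC^{n_i}$. Because $T$ is compact Hausdorff, hence normal, there is a partition of unity $\{\sigma_i\}$ subordinate to this cover, with $\operatorname{supp}\sigma_i\subseteq U_i$.

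We then define $g_i\colon E\to\bC^{n_i}$ by $g_i(e)=\sigma_i(p(e))\,\pi_2(h_i(e))$ for $p(e)\in U_i$ and $g_i(e)=0$ otherwise. Continuity holds because $\sigma_i$ vanishes on a neighborhood of $T\setminus U_i$ in the closed-support sense. Set $N=\sum_i n_i$ and
\[
g=(p,g_1,\dots,g_k)\colon E\to T\times\bC^N .
\]
This is a continuous bundle map that is linear on fibers. It is injective on each fiber $E_t$: some $\sigma_i(t)\neq 0$, and then $g_i$ restricted to $E_t$ is a nonzero multiple of a linear isomorphism. Hence $g$ identifies $E$ with a subbundle $g(E)\subseteq T\times\bC^N$. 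To check that this image is a genuine subbundle (locally trivial) and that $g$ is a homeomorphism onto it, we argue locally: over $U_i\cap\{\sigma_i>0\}$, the map $g$ composed with projection onto the $i$-th block is a trivialization, so $g$ has a continuous left inverse there.

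Next we equip $T\times\bC^N$ with the standard Hermitian metric and set $E^\perp_t=g(E_t)^\perp\subseteq\bC^N$. Let $P_t$ be the orthogonal projection onto $g(E_t)$. Locally $g(E_t)$ is spanned by continuously varying linearly independent vectors $v_1(t),\dots,v_r(t)$, namely the images of a local frame. Gram--Schmidt, or the explicit formula $P_t=V(V^\dagger V)^{-1}V^\dagger$, then shows $t\mapsto P_t$ is continuous in $M_N(\bC)$. Consequently $E^\perp=\{(t,v):P_tv=0\}$ is the image of the continuous projection-valued map $1-P$.

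Local triviality of $E^\perp$ is the main obstacle. The trick is that for $s$ near $t$, the map $1-P_s$ restricted to $\operatorname{ran}(1-P_t)$ is an isomorphism onto $\operatorname{ran}(1-P_s)$, because $\|P_s-P_t\|<1$. This gives a local trivialization $(s,w)\mapsto(s,(1-P_s)w)$ of $E^\perp$ near $t$ with fiber $\operatorname{ran}(1-P_t)$, and the same argument reproves local triviality of $g(E)$. Finally, the fiberwise sum map $g(E)\oplus E^\perp\to T\times\bC^N$, $(x,y)\mapsto x+y$, is a continuous bijective bundle map that is a linear isomorphism on each fiber. Its inverse $v\mapsto(P_tv,(1-P_t)v)$ is continuous, so $E\oplus E^\perp\cong T\times\bC^N$ is trivial.
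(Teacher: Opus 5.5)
The paper does not prove this lemma. It appears in the appendix as the standard fact underlying Swan's theorem and is used without argument, so there is no in-paper proof to compare your route against.

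Your argument is correct and is the standard textbook one, as in Hatcher's \emph{Vector Bundles and K-Theory}. You embed $E$ fiberwise-injectively into $T\times\bC^N$ using finitely many trivializations and a partition of unity, then take the orthogonal complement. Your left-inverse argument over $\{\sigma_i>0\}$ correctly shows that $g$ is a homeomorphism onto a subbundle. Taking $N=\sum n_i$ also handles bundles whose rank varies between components of $T$, which the paper's definition of vector bundle allows.

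The one place you depart from the textbook version is local triviality of $E^\perp$. You get it from continuity of $t\mapsto P_t$ and the estimate $\|P_s-P_t\|<1$. The textbook instead extends a local frame of $g(E)$ to a local frame of $T\times\bC^N$ and applies Gram--Schmidt. Both work.

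The only step you leave implicit is that your local trivialization $(s,w)\mapsto(s,(1-P_s)w)$ has a continuous inverse. This is easily supplied. Set $A_s=(1-P_t)(1-P_s)$ restricted to $\operatorname{ran}(1-P_t)$. Then $\|A_s-1\|\le\|P_s-P_t\|<1$, so $A_s$ is invertible with inverse depending continuously on $s$. Moreover, $1-P_t$ is injective on $\operatorname{ran}(1-P_s)$. Hence the inverse of your trivialization is $(s,u)\mapsto(s,A_s^{-1}(1-P_t)u)$, which is continuous in $(s,u)$.
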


As a direct consequence, $\Gamma(E)$ is a finitely generated projective $C(T)$-module. The content of Swan's theorem is that every finitely generated projective module arises in this way, so that the categories of finite rank vector bundles over $T$ and finitely generated projective modules over $C(T)$ are equivalent.

However, in this note we are interested in Hilbert C*-modules over $C(T)$. It is well-known that the data of a $C(T)$-valued inner product on the space of sections $\Gamma(E)$ is precisely the same data as a Hermitian metric on $E$.
More specifically, $\Gamma \colon \Hilb_{\mathsf{fd}}(T) \to \Hilb_{\mathsf{fgp}}(C(T))$ is an equivalence of categories where $\Hilb_{\mathsf{fd}}(T)$ is the category of finite-dimensional vector bundles over $T$ and $\Hilb_{\mathsf{fgp}}(C(T))$ is the category of finitely generated projective $C(T)$-modules. We remark here that $\Gamma$ is a $\dagger$-functor, described precisely in the following lemma. The proof is immediate from the definition of the $C(T)$-valued inner product on $\Gamma(E)$.

\begin{lem}\label{lem:swandaggerfunctor}
    Let $E$ and $F$ be vector bundles with Hermitian metrics $\langle \cdot | \cdot \rangle_E$ and $\langle \cdot | \cdot \rangle_F$. If $\sigma\colon E \to F$ is an adjointable map of bundles in the sense that there is a bundle map $\sigma^\dagger\colon F \to E$ such that $\langle \sigma(e) | f\rangle_F = \langle e | \sigma^\dagger(f)\rangle_E $, then $\Gamma(\sigma)^\dagger = \Gamma(\sigma^\dagger)$.
\end{lem}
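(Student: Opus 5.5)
The statement to prove is Lemma~\ref{lem:swandaggerfunctor}: for an adjointable bundle map $\sigma\colon E\to F$ we want $\Gamma(\sigma)^\dagger=\Gamma(\sigma^\dagger)$. I would verify directly that $\Gamma(\sigma^\dagger)$ satisfies the defining identity of an adjoint for $\Gamma(\sigma)$ with respect to the $C(T)$-valued inner products, and then invoke uniqueness of adjoints.

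\textbf{Step 1: $\Gamma(\sigma^\dagger)$ is defined.} Since $\sigma^\dagger$ is assumed to be a (continuous, fiberwise linear, base-preserving) bundle map $F\to E$, composition with it sends continuous sections of $F$ to continuous sections of $E$. Thus $\Gamma(\sigma^\dagger)\colon \Gamma(F)\to\Gamma(E)$ is a well-defined $C(T)$-module map, namely $g\mapsto \sigma^\dagger\circ g$.

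\textbf{Step 2: the adjoint identity.} For $f\in\Gamma(E)$, $g\in\Gamma(F)$ and $t\in T$, the definition of the inner products on sections in Theorem~\ref{thm:SerreSwan} gives
$$\langle \Gamma(\sigma)f\,|\,g\rangle_{C(T)}(t)=\langle \sigma(f(t))\,|\,g(t)\rangle_F=\langle f(t)\,|\,\sigma^\dagger(g(t))\rangle_E=\langle f\,|\,\Gamma(\sigma^\dagger)g\rangle_{C(T)}(t).$$
The middle equality is the fiberwise adjointness hypothesis on $\sigma$. Since this holds pointwise for every $t$, the two elements of $C(T)$ agree. Hence $\Gamma(\sigma)$ is adjointable, with $\Gamma(\sigma^\dagger)$ as an adjoint.

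\textbf{Step 3: uniqueness.} Suppose $S$ and $S'$ both satisfy $\langle \Gamma(\sigma)f\,|\,g\rangle=\langle f\,|\,Sg\rangle=\langle f\,|\,S'g\rangle$ for all $f,g$. Then $\langle f\,|\,(S-S')g\rangle=0$ for all $f$. Taking $f=(S-S')g$ and using positive-definiteness of the Hilbert module inner product gives $S=S'$. Therefore $\Gamma(\sigma)^\dagger=\Gamma(\sigma^\dagger)$.

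None of these steps is a real obstacle. The only subtlety is Step 1: continuity of $\sigma^\dagger$ as a bundle map must be part of the hypothesis, which it is as stated. If it were not, one could recover it from local trivializations, where $\sigma^\dagger$ is the conjugate transpose of $\sigma$ relative to the continuously varying Gram matrices of the two metrics.
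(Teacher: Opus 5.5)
Your proposal is correct and follows the same route as the paper, which simply notes that the result is immediate from the pointwise definition $\langle f | g\rangle_{C(T)}(t)=\langle f(t)|g(t)\rangle_E$. Your fiberwise computation is exactly that argument. The uniqueness-of-adjoints step just makes explicit what the paper leaves implicit.
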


However, both $\Hilb_{\mathsf{fd}}(T)$ and $\Hilb_{\mathsf{fgp}}(C(T))$ admit monoidal structures $\otimes$ and $\otimes_{C(T)}$ respectively. It remains to show this equivalence is monoidal; that is, we will exhibit a unitary
$$\eta\colon C(T)_{C(T)}\to \Gamma(T \times \bC)$$
and, for each pair of vector bundles $E$ and $F$, a unitary
$$\mu_{E,F}\colon \Gamma(E) \otimes_{C(T)} \Gamma(F) \to \Gamma(E \otimes F)$$ 
subject to the following coherences:
\begin{itemize}
    \item For any bundles $E, F$, and $G$, the following associativity diagram commutes:
\begin{equation}\label{eq:assoc}
       \begin{tikzcd}
            (\Gamma(E) \otimes_{C(T)} \Gamma(F)) \otimes_{C(T)} \Gamma(G) \arrow[r, "\alpha"] \arrow[d, "\mu_{E,F} \otimes \id"'] & \Gamma(E) \otimes_{C(T)} (\Gamma(F) \otimes_{C(T)} \Gamma(G)) \arrow[d, "\id \otimes \mu_{F,G}"]\\
           \Gamma(E \otimes F) \otimes_{C(T)} \Gamma(G) \arrow[d, "\mu_{E \otimes F, G}"'] & \Gamma(E) \otimes_{C(T)} \Gamma(F \otimes G) \arrow[d, "\mu_{E, F \otimes G}"] \\
           \Gamma((E \otimes F) \otimes G) \arrow[r, "\Gamma(\alpha)"'] & \Gamma(E \otimes (F \otimes G))
        \end{tikzcd}
\end{equation}
    \item For any bundle $E$, the following two unitality diagrams commute (corresponding to left and right unitors, respectively):
\begin{equation}\label{eq:leftunital}
        \begin{tikzcd}
            C(T) \otimes_{C(T)} \Gamma(E) \arrow[r, "\eta \otimes \id"] \arrow[d, "\lambda_{\Gamma(E)}"'] & \Gamma((T \times \bC)) \otimes_{C(T)} \Gamma(E) \arrow[d, "\mu_{(T \times \bC), E}"] \\
            \Gamma(E) & \Gamma((T \times \bC) \otimes E) \arrow[l, "\Gamma(\lambda_{E})"]
        \end{tikzcd}
\end{equation}
\begin{equation}\label{eq:rightunital}
          \begin{tikzcd}
            \Gamma(E) \otimes_{C(T)} C(T) \arrow[r, "\id \otimes \eta "] \arrow[d, "\rho_{\Gamma(E)}"'] &   \Gamma(E) \otimes_{C(T)} \Gamma((T \times \bC))\arrow[d, "\mu_{E, (T \times \bC)}"] \\
            \Gamma(E) & \Gamma(E \otimes (T \times \bC)) \arrow[l, "\Gamma(\rho_{E})"]
        \end{tikzcd}
\end{equation}
\end{itemize}

\begin{defi} 
        Let $E$ and $F$ be vector bundles over the compact Hausdorff space $T$. Define the linear function $\mu_{E,F}\colon \Gamma(E)\otimes_{C(T)}\Gamma(F) \to \Gamma(E \otimes F)$ by (for $f \in \Gamma(E), g \in \Gamma(F)$)
        $$\mu_{E,F}(f \otimes_{C(T)} g)=f \otimes g.$$
        Also, define $\eta\colon C(T) \to  \Gamma(T \times \bC)$ to be, for $h \in C(T)$,
        $$\eta(h)(t)=(t, h(t)).$$
\end{defi}
It is almost a tautology that $\eta$ is a unitary isomorphism of $C(T)$-modules. Seeing that $\mu_{E,F}$ is unitary is much more subtle. We will break this proof into multiple parts.
    \begin{prop} \label{lem:trivialunitaryiso}
        If $E$ and $F$ are trivial bundles equipped with Hermitian metrics $\langle \cdot | \cdot \rangle_E$ and $\langle \cdot | \cdot \rangle_F$, then $\mu_{E,F}$ is a unitary isomorphism. 
    \end{prop}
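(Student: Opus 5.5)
Since $E$ and $F$ are trivial, we fix global frames and reduce everything to free modules over $C(T)$. Choose trivializations $E\cong T\times\bC^n$ and $F\cong T\times\bC^m$. The metrics then become continuous positive-definite matrix-valued functions $G_E\colon T\to M_n(\bC)$ and $G_F\colon T\to M_m(\bC)$.

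\textbf{Reducing to orthonormal frames.} Applying Gram--Schmidt pointwise to the constant sections $e_1,\dots,e_n$ gives continuous sections $u_1,\dots,u_n\in\Gamma(E)$. They are continuous because the Gram--Schmidt formulas involve only continuous functions of $\langle e_i|e_j\rangle_E$, with nonvanishing denominators. These sections are orthonormal at every $t$, so $\Gamma(E)$ is a free Hilbert $C(T)$-module with orthonormal basis $(u_i)$: every $f\in\Gamma(E)$ can be written uniquely as $f=\sum_i u_i\langle u_i|f\rangle$. In the same way $F$ has an orthonormal frame $(v_j)$.

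\textbf{Inner products and isometry.} The sections $u_i\otimes v_j$ form a pointwise orthonormal frame of $E\otimes F$ for the tensor-product metric $\langle e\otimes e'|f\otimes f'\rangle=\langle e|f\rangle\langle e'|f'\rangle$. Hence $\Gamma(E\otimes F)$ is free with orthonormal basis $(u_i\otimes v_j)$. On the module side, $\Gamma(E)\otimes_{C(T)}\Gamma(F)$ is spanned by the elements $u_i\otimes_{C(T)} v_j$, since $f\otimes g=\sum_{i,j}u_i\otimes v_j\,\langle u_i|f\rangle\langle v_j|g\rangle$ by the balancing relations and commutativity of $C(T)$. The interior tensor inner product gives
$$\langle u_i\otimes v_j\,|\,u_k\otimes v_l\rangle=\langle v_j|\langle u_i|u_k\rangle v_l\rangle=\delta_{ik}\delta_{jl}.$$
This agrees with the pointwise inner product of $u_i\otimes v_j$ and $u_k\otimes v_l$ in $\Gamma(E\otimes F)$. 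The map $\mu_{E,F}$ is $C(T)$-linear and well defined on the algebraic balanced tensor product because $(fh)\otimes g=f\otimes(hg)$ pointwise. It therefore carries one orthonormal basis to the other, so it preserves inner products on the algebraic tensor product.

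\textbf{Surjectivity and unitarity.} Every section of $E\otimes F$ is a $C(T)$-combination of the $u_i\otimes v_j$, so $\mu_{E,F}$ is surjective. The algebraic tensor product is already complete, being a finitely generated free module in which the inner product norm is equivalent to the sup norm of coefficients. A surjective, inner-product-preserving $C(T)$-linear map between Hilbert modules is unitary, with adjoint equal to its inverse.

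\textbf{Main obstacle.} The only nonformal step is producing the continuous orthonormal frames. If Gram--Schmidt continuity is considered too loose, one can instead use $G_E^{-1/2}$, computed by continuous functional calculus in $C(T,M_n(\bC))$, to rotate the constant frame into an orthonormal one.
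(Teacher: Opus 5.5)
Your proof is correct and follows essentially the same route as the paper: choose global frames of the trivial bundles, observe that $\mu_{E,F}$ carries the generating set $\{f_i\otimes_{C(T)} g_j\}$ of $\Gamma(E)\otimes_{C(T)}\Gamma(F)$ onto the frame $\{f_i\otimes g_j\}$ of $\Gamma(E\otimes F)$, and compare inner products on these generators. Your version is more careful than the paper's at several points: you orthonormalize the frames, check that $\mu_{E,F}$ is well defined on the balanced tensor product, note that the algebraic tensor product is already complete, and deduce injectivity from the isometry property instead of simply asserting that a set-theoretic inverse exists. The underlying argument is nonetheless the same.
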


    \begin{proof}
        Choose (orthogonal) global sections $\{f_1,f_2,\dots,f_n\}$ of $E$ and $\{g_1,g_2,\dots,g_m\}$ of $F$ that trivialize their respective bundles. These form bases for $\Gamma(E)$ and $\Gamma(F)$, respectively. Now, note that the set 
        $$\{f_i\otimes_{C(T)} g_j\}_{(i,j)=(1,1)}^{(n,m)}$$
        has dense $C(T)$-span in $\Gamma(E)\otimes_{{C(T)}}\Gamma(F)$. However, observe that
        $$\{f_i\otimes g_j\}_{(i,j)=(1,1)}^{(n,m)}$$
        is a set of (orthogonal) global sections of $E \otimes F$ that trivializes the bundle. Since $\mu_{E,F}$ carries the first set to the second, we see that $\mu_{E,F}$ must be surjective. It is clear that $\mu_{E,F}$ has a set-theoretic inverse. To show that $\mu_{E,F}^\dagger = \mu_{E,F}^{-1}$, we have, for all $1\leq i,k \leq n$ and $1\leq j, l\leq m$,
        \begin{align*}
            \langle \mu_{E,F}(f_i \otimes_T g_j) | f_k \otimes g_l \rangle (t) &= \langle (f_i \otimes g_j)(t) | (f_k \otimes g_l)(t) \rangle_{E \otimes F}\\
            &=\langle f_i(t) | f_k(t) \rangle_E\langle g_j(t) | g_l(t) \rangle_F \\
            &=\langle g_j(t) | \langle f_i(t) | f_k(t) \rangle_Eg_l(t) \rangle_F \\
            &=\langle g_j(t) | (\langle f_i | f_k \rangle\triangleright g_l)(t) \rangle_F \\
            &=\langle g_j | \langle f_i | f_k \rangle\triangleright g_l \rangle(t) \\         
            &=\langle f_i \otimes_Tg_j | \mu^{-1}_{E,F}(f_k\otimes g_l)\rangle (t)
        \end{align*}
        This shows that $\mu_{E,F}$ is adjointable and, in particular, is unitary.
    \end{proof}

    \begin{lem}\label{lem:daggerinclusion}
        Let $E$ and $F$ be vector bundles over $T$. Define the maps
        $$i \colon E \to E \oplus F \hspace{1cm} i(v)=(v,0)$$
        and
        $$\rho \colon E \oplus F \to E \hspace{1cm} \rho (v,w)=v.$$
        Then $\Gamma(\rho) \circ \Gamma(i)=\id_{\Gamma(E)}$. Furthermore, if $E$ and $F$ have Hermitian metrics $\langle \cdot | \cdot \rangle_E$ and $\langle \cdot | \cdot \rangle_F$, then $\Gamma(\rho)$ and $\Gamma(i)$ are adjointable with $\Gamma(\rho)^\dagger=\Gamma(i)$ when $E \oplus F$ is equipped with the Hermitian metric $\langle \cdot | \cdot \rangle_{E \oplus F}=\langle \cdot | \cdot \rangle_E + \langle \cdot | \cdot \rangle_F$.
    \end{lem}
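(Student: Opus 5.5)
We verify both claims directly from the definitions of $\Gamma$ and of the Hermitian metric on the direct sum. The plan is to check the identities pointwise on fibers and then lift them to sections.

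For the composite, fix a section $s \in \Gamma(E)$. By definition $\Gamma(i)(s) = i \circ s$, which is the section $t \mapsto (s(t),0)$ of $E \oplus F$. Then $\Gamma(\rho)(i\circ s) = \rho \circ i \circ s$. Since $\rho \circ i = \id_E$ fiberwise, this gives $\Gamma(\rho)\circ\Gamma(i) = \Gamma(\rho\circ i) = \Gamma(\id_E) = \id_{\Gamma(E)}$. Here we only use functoriality of $\Gamma$, which is immediate from $\Gamma(\tau)(f) = \tau\circ f$. One should also note that $i$ and $\rho$ are continuous bundle maps that are linear on fibers. This holds because, in local trivializations, they are the standard inclusion and projection $U \times \bC^k \to U\times \bC^{k+l}$ and $U\times\bC^{k+l}\to U\times\bC^k$.

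For adjointability, I would first establish the bundle-level adjoint relation and then invoke Lemma~\ref{lem:swandaggerfunctor}. Take $v \in E_t$ and $(w,w') \in (E\oplus F)_t$. Then
$$\langle i(v) \mid (w,w')\rangle_{E\oplus F} = \langle v\mid w\rangle_E + \langle 0 \mid w'\rangle_F = \langle v \mid w\rangle_E = \langle v \mid \rho(w,w')\rangle_E.$$
So $i^\dagger = \rho$ as bundle maps, and symmetrically $\rho^\dagger = i$. Applying Lemma~\ref{lem:swandaggerfunctor} then yields $\Gamma(\rho)^\dagger = \Gamma(\rho^\dagger) = \Gamma(i)$. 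In particular, both maps are adjointable.

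Alternatively, one can avoid the lemma and compute directly with the $C(T)$-valued inner products. For $s\in\Gamma(E)$ and $u = (u_1,u_2) \in \Gamma(E\oplus F)$, evaluating at each $t$ gives $\langle \Gamma(i)s \mid u\rangle(t) = \langle s(t)\mid u_1(t)\rangle_E = \langle s \mid \Gamma(\rho)u\rangle(t)$.

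The only point needing care is that $\langle\cdot|\cdot\rangle_E + \langle\cdot|\cdot\rangle_F$ really is a Hermitian metric on $E\oplus F$. It is continuous because it is a sum of continuous functions composed with the continuous coordinate projections. It is a positive definite inner product on each fiber because each summand is, and it vanishes only when both components vanish. No real obstacle is expected; the main thing to watch is consistency with the paper's convention that inner products $\langle\cdot|\cdot\rangle$ are linear in the second variable, which the computation above respects.
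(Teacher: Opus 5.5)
Your proposal is correct and follows the same route as the paper. You get $\Gamma(\rho)\circ\Gamma(i)=\id_{\Gamma(E)}$ from functoriality of $\Gamma$, then check the bundle-level adjoint relation between $i$ and $\rho$ for the sum metric fiberwise and conclude $\Gamma(\rho)^\dagger=\Gamma(i)$ via Lemma~\ref{lem:swandaggerfunctor}. Your extra checks (continuity of $i,\rho$, that the sum is a Hermitian metric, and the direct section-level computation) are details the paper leaves implicit.
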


    \begin{proof}
        It is clear that $\Gamma(\rho) \circ \Gamma(i)=\id_{\Gamma(E)}$, because $\rho \circ i=\id$, and $\Gamma$ is functorial. It is also routine to see that $\rho^\dagger=i$, as
        $$\langle \rho(f_1,g ) | f_2\rangle_E = \langle f_1 | f_2 \rangle_E = \langle (f_1,g) | (f_2,0) \rangle_{E \oplus F}= \langle (f_1,g) | i(f_2) \rangle_{E \oplus F}.$$
        Because $\Gamma$ is a $\dagger$-functor by Lemma~\ref{lem:swandaggerfunctor}, we conclude that $\Gamma(\rho)^\dagger=\Gamma(i).$
    \end{proof}

\begin{lem}\label{lem:muisunitary}
    If $E$ and $F$ are any vector bundles equipped with Hermitian metrics, then $\mu_{E,F}$ is a unitary isomorphism. 
\end{lem}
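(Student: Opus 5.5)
We reduce to the trivial case, Proposition~\ref{lem:trivialunitaryiso}, using complements. By Lemma~\ref{lem:trivialsummand}, choose bundles $E^\perp$ and $F^\perp$ with $\tilde E \coloneqq E\oplus E^\perp$ and $\tilde F\coloneqq F\oplus F^\perp$ trivial. Give $E^\perp, F^\perp$ arbitrary Hermitian metrics and equip $\tilde E,\tilde F$ with the direct sum metrics of Lemma~\ref{lem:daggerinclusion}. Since any metric on a trivial bundle can be made standard by Gram--Schmidt on a trivializing frame, the orthogonal frames required in Proposition~\ref{lem:trivialunitaryiso} exist, so $\mu_{\tilde E,\tilde F}$ is unitary.

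Next, write $i_E, \rho_E$ and $i_F,\rho_F$ for the inclusions and projections of Lemma~\ref{lem:daggerinclusion}. On the bundle side, $i_E\otimes i_F\colon E\otimes F\to \tilde E\otimes\tilde F$ is an isometric inclusion onto an orthogonal summand, with adjoint $\rho_E\otimes\rho_F$. On the module side, $\Gamma(i_E)\otimes_{C(T)}\Gamma(i_F)$ is an isometry with left inverse and adjoint $\Gamma(\rho_E)\otimes_{C(T)}\Gamma(\rho_F)$, because relative tensor products of adjointable maps are adjointable with $(a\otimes b)^\dagger=a^\dagger\otimes b^\dagger$. Directly from the formula $\mu(f\otimes_{C(T)} g)=f\otimes g$ on simple tensors, we get the naturality square
$$\mu_{\tilde E,\tilde F}\circ(\Gamma(i_E)\otimes\Gamma(i_F)) = \Gamma(i_E\otimes i_F)\circ \mu_{E,F},$$
and the analogous square for the projections.

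The conclusion then follows formally. Composing with the projections gives
$$\mu_{E,F} = \Gamma(\rho_E\otimes\rho_F)\circ\mu_{\tilde E,\tilde F}\circ(\Gamma(i_E)\otimes\Gamma(i_F)).$$
This exhibits $\mu_{E,F}$ as well-defined and adjointable, with adjoint $(\Gamma(\rho_E)\otimes\Gamma(\rho_F))\circ\mu_{\tilde E,\tilde F}^{-1}\circ\Gamma(i_E\otimes i_F)$, using Lemma~\ref{lem:swandaggerfunctor}. For isometry, $\mu_{E,F}^\dagger\mu_{E,F}=\id$ follows from the naturality squares and the unitarity of $\mu_{\tilde E,\tilde F}$, since $\Gamma(i_E\otimes i_F)^\dagger\Gamma(i_E\otimes i_F)=\id$.

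For surjectivity, take $s\in\Gamma(E\otimes F)$. Its image $\Gamma(i_E\otimes i_F)s$ equals $\mu_{\tilde E,\tilde F}(z)$ for some $z$. Applying the projection square, $s=\mu_{E,F}\bigl((\Gamma(\rho_E)\otimes\Gamma(\rho_F))z\bigr)$. Hence $\mu_{E,F}$ is a surjective isometry with adjoint equal to its inverse, i.e.\ unitary.

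The main obstacle is the well-definedness and boundedness of $\mu_{E,F}$ on the completed relative tensor product $\Gamma(E)\otimes_{C(T)}\Gamma(F)$ before we know it is isometric. I would handle this by checking directly on finite sums of simple tensors that $\langle\mu(\xi)|\mu(\xi)\rangle=\langle\xi|\xi\rangle$, using the same fiberwise computation as in Proposition~\ref{lem:trivialunitaryiso}, which requires no triviality. This gives isometry on the algebraic tensor product, so $\mu$ kills null vectors and extends by continuity. The complement argument above is then needed only for surjectivity.
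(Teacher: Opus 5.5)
Your proposal is correct and follows essentially the same route as the paper: embed $E\otimes F$ as an orthogonal summand of the trivial bundle $(E\oplus E^\perp)\otimes(F\oplus F^\perp)$, use the inclusion and projection naturality squares to write $\mu_{E,F}=\Gamma(\rho)\circ\mu\circ(\Gamma(i)\otimes\Gamma(i))$, and read off adjointability, surjectivity, and $\mu_{E,F}^\dagger=(\Gamma(\rho)\otimes\Gamma(\rho))\circ\mu^{-1}\circ\Gamma(i)$. The differences are minor: you check $\mu_{E,F}^\dagger\mu_{E,F}=\id$ directly, whereas the paper proves injectivity and then identifies the adjoint with the inverse, and you explicitly address well-definedness of $\mu_{E,F}$ on the completed tensor product (via the fiberwise inner-product computation, or via your composite formula), a point the paper leaves implicit.
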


\begin{proof}
    Using Lemma~\ref{lem:trivialsummand}, it is clear that $E \otimes F$ is a direct summand of the trivial bundle $(E \oplus E^\perp) \otimes (F \oplus F^\perp)$. Consider the diagram
        \begin{equation}\label{diagram1}
            \begin{tikzcd}
                \Gamma((E \oplus E^\perp) \otimes (F \oplus F^\perp)) & \arrow[l, "\mu"']\Gamma(E \oplus E^\perp) \otimes_{C(T)} \Gamma(F \oplus F^\perp) \\
                \Gamma(E \otimes F) \arrow[u, "\Gamma(i)"] & \arrow[l, "\mu_{E,F}"] \Gamma(E) \otimes_{C(T)} \Gamma(F), \arrow[u, "\Gamma(i) \otimes \Gamma(i)"']
            \end{tikzcd}
        \end{equation}
        where $\mu$ is the unitary isomorphism for $(E \oplus E^\perp)$ and $(F \oplus F^\perp)$ in Proposition~\ref{lem:trivialunitaryiso}. Note that, by Lemma~\ref{lem:daggerinclusion}, $\Gamma(i)$ is injective. Furthermore,
        $$(\Gamma(\rho) \otimes_{C(T)} \Gamma(\rho))\circ (\Gamma(i) \otimes_{C(T)} \Gamma(i))  = \Gamma(\id) \otimes_{C(T)} \Gamma(\id)$$
        and so $\Gamma(i) \otimes \Gamma(i)$ is injective. Thus $\mu_{E,F}$ must be injective. Similarly, we have 
        \begin{equation}\label{diagram2}
            \begin{tikzcd}
                \Gamma((E \oplus E^\perp) \otimes (F \oplus F^\perp)) \arrow[d, "\Gamma(\rho)"'] & \arrow[l, "\mu"']\Gamma(E \oplus E^\perp) \otimes_{C(T)} \Gamma(F \oplus F^\perp) \arrow[d, "\Gamma(\rho) \otimes \Gamma(\rho)"]\\
                \Gamma(E \otimes F)  & \arrow[l, "\mu_{E,F}"] \Gamma(E) \otimes_{C(T)} \Gamma(F) ,
            \end{tikzcd}
        \end{equation}
        and so $\Gamma(\rho)$ and $\Gamma(\rho) \otimes \Gamma(\rho)$ are both surjective. This guarantees that $\mu_{E,F}$ is surjective as well. What remains to show is that $\mu_{E,F}$ is adjointable (and unitary), but this follows from our earlier work. Observe that Diagram~\eqref{diagram1} says that
        $$\Gamma(i) \circ \mu_{E,F}= \mu \circ (\Gamma(i)\otimes_{C(T)} \Gamma(i)),$$
        which implies
        $$\mu_{E,F}= \Gamma(\rho)\circ \Gamma(i) \circ \mu_{E,F}=\Gamma(\rho)\circ \mu \circ (\Gamma(i)\otimes_{C(T)}\Gamma(i))$$
        and so $\mu_{E,F}$ is a composition of adjointable maps and is therefore itself adjointable (noting that the Hermitian metrics on $E \oplus E^\perp$, $F \oplus F^\perp$, and $(E \oplus E^\perp)\otimes (F \oplus F^\perp)$ may be chosen to be compatible with Proposition~\ref{lem:trivialunitaryiso} so that $\mu$ is unitary). To verify that $\mu_{E,F}$ is unitary, we have
        \begin{align*}
            \mu_{E,F}^\dagger &= (\Gamma(i)\otimes_{C(T)} \Gamma(i))^\dagger \circ \mu^\dagger \circ \Gamma(\rho)^\dagger\\
            &= (\Gamma(i)^\dagger \otimes_{C(T)} \Gamma(i)^\dagger) \circ \mu^\dagger \circ \Gamma(\rho)^\dagger \\
            &= (\Gamma(\rho) \otimes_{C(T)}\Gamma(\rho)) \circ \mu^{-1} \circ \Gamma(i).
        \end{align*}
        But, Diagram~\eqref{diagram2} says that
        $$\mu_{E,F}^{-1}\circ \Gamma(\rho) = (\Gamma(\rho) \otimes_{C(T)} \Gamma(\rho)) \circ \mu^{-1},$$
        and so precomposing with $\Gamma(i)$ yields
        $$\mu_{E,F}^{-1} = \mu_{E,F}^{-1} \circ \Gamma(\rho) \circ \Gamma(i) = (\Gamma(\rho) \otimes_{C(T)} \Gamma(\rho)) \circ \mu^{-1} \circ \Gamma(i)=\mu_{E,F}^\dagger$$
        which proves that $\mu_{E,F}$ is unitary.
\end{proof}
    
\begin{thm}
The space of sections construction $E \mapsto \Gamma(E)$ assembles into a monoidal equivalence of C*-$\otimes$-categories 
$$\Gamma \colon \mathsf{Hilb_{fd}}(T) \to \mathsf{Hilb_{fgp}C(T)}.$$
\end{thm}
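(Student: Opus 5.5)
The plan is to assemble the ingredients already established in the appendix into the data of a monoidal $\dagger$-equivalence. By Theorem~\ref{thm:SerreSwan}, $\Gamma$ is an equivalence of categories, and by Lemma~\ref{lem:swandaggerfunctor} it is a $\dagger$-functor. What remains is to check four things: the structure maps $\eta$ and $\mu_{E,F}$ are unitary and natural, and the coherence diagrams \eqref{eq:assoc}, \eqref{eq:leftunital}, \eqref{eq:rightunital} commute.

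\textbf{Unitarity.} For $\eta$, I would observe that $\langle \eta(h)|\eta(k)\rangle(t)=\overline{h(t)}k(t)=\langle h|k\rangle(t)$ when $T\times\bC$ carries the standard metric. Moreover $\eta$ is bijective, since a section of $T\times\bC$ is exactly a continuous function. Hence $\eta$ is unitary. Unitarity of $\mu_{E,F}$ for arbitrary Hermitian bundles is Lemma~\ref{lem:muisunitary}.

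\textbf{Naturality of $\mu$.} Take bundle maps $\sigma\colon E\to E'$ and $\tau\colon F\to F'$. On simple tensors,
$$\mu_{E',F'}\circ(\Gamma(\sigma)\otimes\Gamma(\tau))(f\otimes_{C(T)} g)=(\sigma\circ f)\otimes(\tau\circ g)=\Gamma(\sigma\otimes\tau)(f\otimes g),$$
and the right-hand side equals $\Gamma(\sigma\otimes\tau)\circ\mu_{E,F}(f\otimes_{C(T)} g)$. Both composites are bounded (adjointable) $C(T)$-module maps, and simple tensors have dense span. Therefore the identity extends to all of $\Gamma(E)\otimes_{C(T)}\Gamma(F)$.

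\textbf{Coherence diagrams.} I would use the same density strategy for \eqref{eq:assoc}. On a simple tensor $(f\otimes g)\otimes h$, both paths produce the section $t\mapsto f(t)\otimes(g(t)\otimes h(t))$. The bottom path goes through $(f\otimes g)\otimes h$ and then applies the fiberwise associator; the top path applies $\alpha$ first and then $\mu$ twice. For the unit diagram \eqref{eq:leftunital}, the element $h\otimes_{C(T)} f$ is sent by the top-right path to
$$t\mapsto\lambda_E\big((t,h(t))\otimes f(t)\big)=h(t)f(t),$$
and by $\lambda_{\Gamma(E)}$ to $hf$. These agree. Diagram \eqref{eq:rightunital} is symmetric to this one.

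All maps involved are adjointable, hence continuous, so agreement on the dense span of simple tensors gives commutativity everywhere. A strong monoidal equivalence of underlying categories whose structure maps are unitary is a monoidal $\dagger$-equivalence, and this completes the argument.

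The only potential subtlety is continuity, which is needed to pass from simple tensors to the completed relative tensor product. This is exactly why the adjointability furnished by Lemma~\ref{lem:muisunitary} is required beforehand. Everything else is a fiberwise check.
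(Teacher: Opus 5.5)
Your proposal is correct and follows the same route as the paper. Like the paper, you cite Lemma~\ref{lem:swandaggerfunctor} and Lemma~\ref{lem:muisunitary} for the $\dagger$-functor property and the unitarity of the structure maps, and then check the associativity and unitality diagrams on simple tensors. Your additional checks (naturality of $\mu_{E,F}$, the direct unitarity computation for $\eta$, and the extension from simple tensors by continuity) are steps the paper leaves implicit, and they are sound.
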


\begin{proof}
We saw in Lemma~\ref{lem:swandaggerfunctor} that $\Gamma$ is a $\dagger$-functor. We also know that $\eta$ and $\mu_{E,F}$ are unitary isomorphisms from Lemma~\ref{lem:muisunitary}. We only need to verify the coherences for a monoidal functor. It is clear from the construction of $\mu_{E,F}$ and the fact that associators $\alpha$ are determined by reparenthesizing simple tensors that the associativity diagram \eqref{eq:assoc} commutes. Indeed, for $e \in \Gamma(E)$, $f \in \Gamma(F)$, and $g \in \Gamma(G)$, observe
     \begin{center}
        \begin{tikzcd}
            (e \otimes_{C(T)} f) \otimes_{C(T)} g \arrow[r,mapsto, "\alpha"] \arrow[d, mapsto,"\mu_{E,F} \otimes \id"'] & e \otimes_{C(T)} (f \otimes_{C(T)} g) \arrow[d, mapsto,"\id \otimes \mu_{F,G}"]\\
            (e \otimes f) \otimes_{C(T)} g \arrow[d, mapsto,"\mu_{E \otimes F, G}"'] & e \otimes_{C(T)}(f \otimes g) \arrow[d, mapsto,"\mu_{E, F \otimes G}"] \\
            (e \otimes f) \otimes g \arrow[r, mapsto,"\Gamma(\alpha)"'] & e \otimes (f \otimes g)
        \end{tikzcd}
    \end{center}
    
We next need to check Diagrams \eqref{eq:leftunital} and \eqref{eq:rightunital} for unitality. For $h \in C(T)$ and $e \in \Gamma(E)$, observe


    \begin{center}

        \begin{tikzcd}
            h(t) \otimes_{C(T)} e(t) \arrow[r, mapsto,"\eta \otimes \id"] \arrow[d, mapsto,"\lambda_{\Gamma(E)}"'] & (t,h(t)) \otimes_{C(T)} e(t) \arrow[d, mapsto,"\mu_{(T \times \bC), E}"] \\
            h(t)e(t) & (t,h(t)) \otimes e(t) \arrow[l, mapsto, "\Gamma(\lambda_{E})"]
        \end{tikzcd}
    \end{center}

        \begin{center}
        \begin{tikzcd}
            e(t) \otimes_{C(T)} h(t) \arrow[r, mapsto,"\id \otimes \eta "] \arrow[d, mapsto,"\rho_{\Gamma(E)}"'] &   e(t) \otimes_{C(T)} (t,h(t)) \arrow[d, mapsto,"\mu_{E, (T \times \bC)}"] \\
            e(t)h(t) & e(t) \otimes (t,h(t)) \arrow[l, mapsto,"\Gamma(\rho_{E})"]
        \end{tikzcd}
    \end{center}

    Thus, $\Gamma$ is a monoidal $\dagger$-equivalence $\Hilb_{\mathsf{fd}}(T) \to \Hilb_{\mathsf{fgp}}(C(T))$
\end{proof}
    
\providecommand{\bysame}{\leavevmode\hbox to3em{\hrulefill}\thinspace}
\providecommand{\MR}{\relax\ifhmode\unskip\space\fi MR }
\providecommand{\MRhref}[2]{%
  \href{http://www.ams.org/mathscinet-getitem?mr=#1}{#2}
}

\end{document}